\newcommand{\Hom}{\mbox{{\rm Hom}}}
\newtheorem{thm}{Theorem}[section]
\newtheorem{pro}[thm]{Proposition}
\newtheorem{lem}[thm]{Lemma}
\newtheorem{cor}[thm]{Corollary}
\newtheorem{ex}[thm]{Example}
\newtheorem{que}[thm]{Question}
\newtheorem{df}[thm]{Definition}
\newtheorem{rem}[thm]{Remark}
\newcommand{\cl}{\mbox{{\rm cl}}}
\newcommand{\st}{\mbox{{\rm St}}}
\newcommand{\pr}{\mbox{{\rm pr}}}
\newcommand{\id}{\mbox{{\rm id}}}
\newcommand{\St}{\mbox{{\rm st}}}
\newcommand{\aut}{\mbox{{\rm Aut}}}
\begin{document}

\title{Enveloping semigroups as compactifications of topological groups}

\author{K.\,L.~Kozlov}\thanks{The study of the first author was carried out with the help of the Center of Integration in Science, Ministry of Aliyah and Integration, Israel,  8123461, and is supported by ISF grant 3187/24.}
\address{Department of Mathematics, Ben-Gurion University of the Negev, Beer Sheva, Israel}
\email{kozlovk@bgu.ac.il}
\author{B.\,V.~Sorin}
\address{Lomonosov Moscow State University, Moscow, Russia}
\email{bvs@improfi.ru}


\date{}

 \maketitle
 
\begin{abstract}  An {\it Ellis compactification} $b G$ of $G$ is a proper $G$-compactification of $G$ such that $(bG, \bullet)$ is a right topological monoid and $\bullet|_{G\times b G}=\tilde\alpha$, where $\tilde\alpha$ is the extension of left action of $G$ on itself to $b G$. A method that uses Ellis's "functional approach" is proposed for constructing Ellis compactifications. The proposed method allows us to describe Ellis compactifications of a group $G$ and the corresponding totally bounded uniformities on $G$, to draw conclusions about Roelcke precompactness of $G$, the presence of a semigroup structure on the Roelcke compactification of $G$, and to compare compactifications.

As examples, the proposed method is applied to permutation groups of infinite sets and automorphism groups of ultrahomogeneous chains and LOTS.

\medskip
 
Keywords: semigroup, enveloping Ellis semigroup, compactification, ultrahomogeneity, chain, uniformity 

\medskip

MSC2020: Primary 22A15, 54H15 Secondary 57S05, 20F65, 22F50, 54E15, 54D35
\end{abstract}
 
\section{Introduction} 

Compactifications of topological (semi)groups are a vital tool of investigations in Topological dynamics~\cite{Vries}, \cite{GlasnerMegr} and Topological Algebra~\cite{HS}. They play a crucial role in understanding the structure and properties of topological groups and there is an interesting connection between Roelcke compactifications, on the one side, and modern representation theory of ''big'' groups, on the other. They are fundamental in topological dynamics, particularly in studying the behavior of dynamical systems on compact spaces. 

R.~Ellis introduced the "functional approach" in the study of compactifications of groups. A topological group $G$ is a subgroup of the group of homeomorphisms $\Hom(X)$ of a compact space $X$. Each $g\in G$ is examined as a self-map of $X$ and an element of $X^X$ (composition of maps in $X^X$ corresponds to multiplication in $G$). The closure of $G$ in $X^X$ is named the {\it enveloping {\rm(}Ellis{\rm)} semigroup}~\cite{Ellis}. It is a compact right topological monoid and is a non-proper $G$-compactification of $G$. The Ellis enveloping semigroup is proved to be a powerful tool in the abstract theory of topological dynamical systems.  However, explicit computations of enveloping semigroups are quite rare.

In the paper, by a proper compactification of a topological space $X$, we understand a compact space in which $X$ is a dense subspace. The well-known examples of proper compactifications of topological groups are the greatest ambit and the Roelcke compactification. Due to A.~Weil, a topological group $G$ is a dense subgroup of a compact topological group $K$ iff $G$ is precompact in the right uniformity. There are groups without (proper) semitopological semigroup compactifications~\cite{Megr2001}. Every topological group has a proper right topological semigroup compactification, for instance, its greatest ambit. A self-contained treatment of the theory of compact right topological semigroups and, in particular, of semigroup compactifications (non-proper) is in~\cite{BergJungMiln}. For additional information on proper compactifications of topological groups, see~\cite{8}.

In~\cite{KozlovSorin} it is noted that if $G$ is in the topology of pointwise convergence for the action $G\curvearrowright X$, provided $X$ is compact, then the Ellis enveloping semigroup is a proper $G$-compactification of $G$. This observation led to the notion of an {\it Ellis compactification} in~\cite[Definition 3.10]{KozlovLeiderman} or~\cite[Definition 2.1]{KozlovSorin}. An {\it Ellis compactification} $b G$ of a topological group $G$ is a proper $G$-compactification of $G$ such that $(bG, \bullet)$ is a right topological monoid and $\bullet|_{G\times b G}=\tilde\alpha$, where $\tilde\alpha$ is the extension of left action of $G$ on itself to $b G$ (Definition~\ref{defElliscomp}). 

In the present paper, we study  Ellis compactifications of topological groups. For this purpose, for a topological group $G$, we examine its {\it $\tau_p$-representation} in a topological space $X$ (if for the action  $G\curvearrowright X$ the topology of pointwise convergence on $G$ coincides with the original topology on $G$ and $X$ is a $G$-Tychonoff space, then we say that $G$ is {\it $\tau_p$-representable in $X$} (Definition~\ref{taurepres})). Further, we find $G$-compactifications of $X$ (examples illustrate that it is an easier problem than finding initially "good" compactifications of $G$). Finally, we obtain Ellis compactifications of $G$ (from $G$-compactifications of $X$) which are automatically right topological monoids. 

We use terminology from~\cite{KozlovLeiderman}, where the classification of proper semigroup compactifications is proposed. Besides Ellis compactifications of a group $G$, we examine sm-compactifications of $G$ (proper $G$-compactifications of $G$ which are semitopological monoids) and sim-compactifications of $G$ (proper $G$-compactifications of $G$ which are semitopological inverse monoids with continuous inverses). Every sim-compactification of $G$ is a sm-compactification of $G$, which in its turn is an Ellis compactification of $G$. If the poset of sm-compactifications of $G$ is non-empty, then the WAP-compactification of $G$ is its maximal element. 

Preliminary information about $G$-spaces, semigroups and uniformities on topological groups is contained in \S~\ref{prelim}. In Lemma~\ref{l1}, we justify the preservation of the topology of pointwise convergence when passing to $G$-compactifications of phase spaces. 

Our main result is the method of constructing Ellis compactifications described in \S~\ref{Elliscmp}. Proposition~\ref{mapelliscomp} is the starting point for the maps of Ellis compactifications and their comparisons. Proposition~\ref{Reesquot} details when one Ellis compactification is the Rees quotient of another. In Corollary~\ref{order} for the $\tau_p$-representation of a group $G$ in $X$ the map of $G$-compactifications of $X$ to Elis compactifications of $G$ is established. It preserves partial orders.

The uniformity on $G$ from a family of small subgroups (family of stabilizers of a finite number of points in the case of topology of pointwise convergence) is introduced in~\cite{Kozlov}. Its "spectral" character of definition turned out to be convenient for its comparison with the subspace uniformity on $G$ as the subset of Ellis compactification (Proposition~\ref{lemincl} and Corollary~\ref{coinunif1}).  In Proposition~\ref{a2-2} the sufficient condition for their equality is given. This equality yields Roelcke precompactness of $G$. 

In Proposition~\ref{autultr}, the order of Roelcke compactification and Ellis compactification of $G$ from its $\tau_p$-representation in its coset $X=G/H$ with respect to the neutral subgroup $H$ is established. The Ellis compactification $\mathfrak{E}_{\theta} (\beta_G X)$ of $G$ from its representation in $\beta_G X$  is less than or equal to  $\mathfrak{E} (b_r G)$, the least Ellis compactification of $G$ that is greater than or equal to the Roelcke compactification $b_r G$. Moreover, if  $\mathfrak{E}_{\theta} (\beta_G X)\geq b_r G$, then  $\mathfrak{E}_{\theta} (\beta_G X)=\mathfrak{E} (b_r G)$. 

Examples illustrate what Ellis compactifications can be obtained from $G$-compactifications of phase spaces. In \S~\ref{permutat} we examine an ultratransitive action of a group $G$ (in the topology of pointwise convergence) on a discrete space $X$. It is well-known that the permutation group ${\rm S}(\mathbb N)=({\rm S}(\mathbb N), \tau_p)$ of $\mathbb N$ is a  non-archimedean Roelcke precompact Polish group~\cite{Gau} (see also~\cite[Example 9.14]{RD}).  In~\cite[\S\ 12, Theorem 12.2]{GlasnerMegr2008} the description of Roelcke compactification $b_r  {\rm S}(\mathbb N)$ of ${\rm S}(\mathbb N)$ is given using its Ellis compactification from the $\tau_p$-representation of ${\rm S}(\mathbb N)$ in $\alpha\mathbb N$. It is a symmetric inverse semigroup $I_{\mathbb N}$ and $b_r {\rm S}(\mathbb N)$ is a WAP-compactification of ${\rm S}(\mathbb N)$. It is proved that $b_r  {\rm S}(\mathbb N)$ is homeomorphic to the Cantor set, and that ${\rm S}(\mathbb N)$ is not extremely amenable. In~\cite{BITsankov} it is proved that $b_r {\rm S}(\mathbb N)$ is a Hilbert compactification of ${\rm S}(\mathbb N)$ and all its factors are Hilbert--representable (see definitions in~\cite{GlasnerMegr}).

In Theorem~\ref{Roelcke precomp4-1} it is shown that for the permutation group ${\rm S}(X)=({\rm S}(X), \tau_p)$ of the infinite discrete space $X$  Alexandroff one-point compactification $\alpha X$ is the unique $G$-compactification of $G$-space $({\rm S}(X), X, \curvearrowright)$. ${\rm S}(X)$ is Roelcke precompact and its Roelcke compactification $b_r  {\rm S}(X)$ is Ellis compactification from its $\tau_p$-representation in $\alpha X$. Moreover, $b_r  {\rm S}(X)$ is a sim-compactification of ${\rm S}(X)$ (the symmetric inverse semigroup $I_{X}$) and a WAP-compactification of ${\rm S}(X)$. 
In Corollary~\ref{sisdiscr} it is shown that a countable number of sim-compactifications of ${\rm S}(X)$ exist and they are the Rees quotients of the Roelcke compactification $b_r  {\rm S}(X)$. 

In \S~\ref{chain} we examine an ultrahomogeneous discrete chain $X$ and LOTS $X$, and their automorphism group $\aut (X)$. It is well known that the group $\aut (\mathbb Q)$ (in the topology of pointwise convergence from its $\tau_p$-representation in a discrete chain $\mathbb Q$) is a non-archimedean Roelcke precompact Polish group, its Roelcke-compactification $b_r \aut (\mathbb Q)$ is not a compact semigroup. Further, the Hilbert compactification of $\aut (\mathbb Q)$ is a semitopological inverse monoid with continuous inverse, is a WAP-compactification of $\aut (\mathbb Q)$ and all its factors are Hilbert--representable~\cite {BITsankov}. $\aut (X)$ in the topology of pointwise convergence is extremely amenable for any ultrahomogeneous discrete chain $X$~\cite{pestov1998}. 

The group $\aut ([0, 1])$ (in the compact-open topology from its $\tau_p$-representation in LOTS $[0, 1]$) is Roelcke precompact, $b_r \aut ([0, 1])$ can be identified with the curves that connect the points $(0, 0)$ and $(1, 1)$ and monotone in the cardinal product order on $[0, 1]\times [0, 1]$~\cite{Usp} (see, for instance, \cite{GlasnerMegr2008}). A WAP-compactification of $\aut ([0, 1])$ is trivial~\cite{Megr2001} and $\aut ([0, 1])$ is extremely amenable~\cite{pestov1998}. 

We establish in Proposition~\ref{betaG} that the maximal $G$-compactification of an ultrahomogeneous discrete chain $X$ is the least linearly ordered compactification of a GO-space $X$, and the maximal $G$-compactification of a LOTS $X$ is the least linearly ordered compactification of $X$. In Lemmas~\ref{compLOTS} and~\ref{compd} all $G$-compactifications of $X$ are described. If $X$ is LOTS, there are two $G$-compactifications. If $X$ is a discrete chain, there are five $G$-compactifications. 

Corresponding Ellis compactifications $e_{c_m X}\aut (X)$ and $e_{c X}\aut (X)$ of $\aut (X)$, $X$ is LOTS, are described in Theorems~\ref{Roelcke precomp4-3-1} and~\ref{LOTcomp}. The group $\aut (X)$ is Roelcke precompact, $b_r \aut (X)<e_{c_m X}\aut (X)$,  $b_r \aut (X)$ and $e_{c X}\aut (X)$ are incomparable. They can't be sm-compactifications of $\aut (X)$ (Corollary~\ref{LOTSincom}). By the result of M.~Megrelishvili for the group $\aut ([0, 1])$ it seems that there are no sm-compactifications of $\aut (X)$.  If $c_m X=[0, 1]$, then $e_{c_m X} \aut (X)$ is a retract of the Helly space and is a first-countable, separable non-metrizable compact space (Remark~\ref{Helly}). 

Corresponding Ellis compactifications of $\aut (X)$, $X$ is a discrete chain, are described in Theorems~\ref{Roelcke precomp4-2-1}  and~\ref{allcompdiscr}. The group $\aut (X)$ is Roelcke precompact, $b_r \aut (X)\leq e_{b_m X}\aut (X)$, where $b_m X$ is the maximal $G$-compactification of $X$, and the equality is valid iff $X$ is a continuously ordered chain (Corollary~\ref{contchain}). Moreover, $e_{b_m X}\aut (X)$ is the least Ellis compactification of $\aut (X)$, that is greater than or equal to the Roelcke compactification $b_r \aut (X)$. $e_{b_m X}\aut (X)$ is not a sm-compactification of  $\aut (X)$ (Remark~\ref{sscompdiscr}).

The Ellis compactification $e_{\alpha X}\aut (X)$, where $\alpha X$ is the Alexandroff one-point compactification of $X$, is a sim-compactification of $\aut (X)$. It is isomorphic to the inverse monoid of partial automorphisms of $X$ and $e_{\alpha X}\aut (X)<b_r \aut (X)$ (Theorem~\ref{sischain}). Therefore, the WAP-compactification of $\aut (X)$ is a proper sm-compactification of  $\aut (X)$ and less than the Roelcke compactification $b_r \aut (X)$ (Remark~\ref{sscompdiscr}). In Corollary~\ref{sischaindiscr} it is shown that a countable number of sim-compactifications of $\aut (X)$ exist and they are the Rees quotients of the Ellis compactification $e_{\alpha X}\aut (X)$. If $X=\mathbb Q$, then all these sim-compactifications are metrizable and homeomorphic to the Cantor set.

The intermediate Ellis compactifications of $\aut (X)$ are not sm-compactifications of $\aut (X)$ (Remark~\ref{sscompdiscr}). If $X$ is not a continuously ordered chain, then the intermediate Ellis compactifications of $\aut (X)$ are incomparable with $b_r \aut (X)$ (Theorem~\ref{allcompdiscr}).

Ellis compactifications of automorphism groups of ultrahomogeneous cyclically ordered discrete sets (spaces) are established in~\cite{GSorinI} and~\cite{GSorin2}.

\medskip

We adopt terminology and notations from~\cite{Engelking} and~\cite{RD}. 

All spaces considered are Tychonoff, and we denote, if necessary, a (topological) space as $(X, \tau)$ where $\tau$ is a topology on the set $X$. We order topologies: $\tau'\geq\tau$ iff $\tau\subset\tau'$. An abbreviation ``nbd''  refers to an open neighbourhood of a point, nbd of $x$ is denoted $O_x$. The family of all nbds of the unit $e$ in the topological group $G$ is denoted $N_{G}(e)$. Subgroups of topological groups are closed subgroups. $|X|$ is the cardinality of the set (space) $X$.

Maps are continuous maps. The quotient onto map $f: X\to Y$ is an {\it elementary map} if the preimages of all points of $Y$, except maybe one, are one-point sets~\cite{Fed2003}.

A proper compactification of space $X$ is denoted $(bX, b)$ where $b X$ is a compact space, $b: X\to b X$ is a dense embedding. The order on compactifications: $(b'X, b')\geq (bX, b)$ iff $\exists\ \varphi: b' X\to b X$ such that $\varphi\circ b'=b$. $\varphi$ is the {\it map of compactifications}.

Uniform space is denoted as $(X, \mathcal U)$, $u$ is a uniform cover and ${\rm U}$ is the corresponding entourage of $\mathcal U$. Uniformity on a topological space is compatible with its topology. We write $\mathcal U\subset\mathcal V$ if the identity map  $\id: (X,\mathcal V)\to (X,\mathcal U)$ of uniform spaces is uniformly continuous. For ${\rm U}\in\mathcal U$ ${\rm U}\circ {\rm U}=\{(x, z)\ |\ \exists\ y\in X\ \mbox{such that}\ (x, y)\in {\rm U}, (y, z)\in {\rm U}\}$. For $x\in X$ and ${\rm U}\in\mathcal U$ the {\it ball} (with center $x$ and radius ${\rm U}$) is the set $B(x, {\rm U})=\{y\in X\ |\ (x, y)\in {\rm U}\}$. The star of a point $x$ with respect to the cover $u$ is denoted $\St (x, u)$. If ${\rm U\in \mathcal U}$ corresponds to a uniform cover $u\in \mathcal U$, then $B(x, {\rm U})=\St (x, u)$. If a cover $v$ is a refinement of a cover $u$, we use the notation $v\succ u$. For a cover $u$ of $Y$ and $X\subset Y$ $u\wedge X=\{U\cap X\ |\ U\in u\}$. If $X$ is a subset of a uniform space $(Y, \tilde{\mathcal U})$, then $\mathcal U=\{u\wedge X\ |\ u\in\tilde{\mathcal U}\}$ is the base of the {\it subspace uniformity} on $X$. The completion of $(X, \mathcal U)$ is denoted $(\tilde X, \tilde{\mathcal U})$. All the necessary (and additional) information about uniform structures (on groups) can be found in~\cite{Isbell} and~\cite{RD}.


\section{Preliminaries}\label{prelim} 

\subsection{$G$-spaces, admissible group topologies and topology of pointwise convergence}\label{admtop}

An action $\theta: G\times X\to X$ (designation $\theta (g, x)=gx$) of a group $G$ on a set $X$ ({\it phase space}) is called {\it effective} if the {\it kernel of the action} $\{g\in G\ |\ gx=x,\ \forall\ x\in X\}$ is the unit $e$ of $G$. If $G$ effectively acts on $X$, then $G\subset {\rm S}(X)$, where ${\rm S}(X)$ is the {\it permutation group} of $X$. In the paper, only effective actions are examined.

The family of finite (nonempty) subsets of a set (or space) $X$ is denoted $\Sigma_X$ and directed by inclusion $\sigma\leq\sigma'$ iff $\sigma\subset\sigma'$. The subgroup $\st_{\sigma}=\{g\in G\ |\ gx=x,\ x\in\sigma\in\Sigma_X\}=\bigcap\limits_{x\in\sigma}\st_{x}$ of $G$ is a {\it stabilizer}  (or stabilizer of points from $\sigma$).

If $T\subset G$, $Y\subset X$, then $TY=\theta (T, Y)=\bigcup\{ty\ |\ t\in T,\ y\in Y\}$. For $x\in X$ $\theta_x: G\to X$, $\theta_x(g)=\theta (g, x)$, is an {\it orbit map}. 

\medskip

Under a continuous action $\theta: G\times X\to X$ of a topological group $G$ on a space $X$, the triple $(G, X, \theta)$ is called a {\it $G$-space} (abbreviation $X$ is a $G$-space). A  continuous action $\theta: G\times X\to X$ will be denoted $G\curvearrowright X$ if we don't need the special designation for the map. If $X$ is a compact space, then the $G$-space $(G, X, \theta)$ will be called a {\it compact $G$-space} (or, simply, $X$ is a compact $G$-space).

The map $f: X\to Y$ of $G$-spaces $(G, X, \theta_X)$ and  $(G, Y, \theta_Y)$ is a {\it $G$-map} (or {\it equivariant map}) if $f(\theta_X (g, x))=\theta_Y(g, f(x))$, $g\in G$, $x\in X$. 

A uniformity $\mathcal U_X$ on a phase space $X$ of a $G$-space $(G, X, \theta)$ is called an {\it equiuniformity}~\cite{Megr1984} (see, also~\cite{Megr0}) if the action $G\curvearrowright X$ is {\it saturated} (any homeomorphism from $G$ is uniformly continuous with respect to $\mathcal U_X$) and is {\it bounded} (for any $u\in\mathcal U_X$ there exist $O\in N_G(e)$ and $v\in\mathcal U_X$ such that the cover $Ov=\{OV\ |\ V\in v\}$ is refined in $u$). In this case  $(G, X, \theta)$ a {\it $G$-Tychonoff space} (or, simply, $X$ is a $G$-Tychonoff space), the action is extended to the continuous action $\tilde\theta: G\times \tilde X\to\tilde X$ on the completion $(\tilde X, \tilde{\mathcal U}_X)$ of $(X, \mathcal U_X)$. The extension $\tilde{\mathcal U}_X$ of $\mathcal U_X$ is an equiuniformity on $\tilde X$. The embedding $\jmath: X\to\tilde X$ is a $G$-map, the pair $(\tilde X, \jmath)$  is a  {\it $G$-extension of $X$} and $\jmath (X)$ is a dense {\it invariant} subspace of $\tilde X$. $G$-spaces as extensions will also be denoted as $(G, (\tilde X, \jmath), \tilde\theta)$.

\medskip

If $\mathcal U_X$ is a totally bounded equiuniformity on $X$, then $(\tilde X, j)$ is a  {\it $G$-compactification} (or {\it equivariant compactification}) of $G$-space $X$. If  $(G, X, \curvearrowright)$ is a $G$-Tychonoff space,  then the maximal (totally bounded) equiuniformity  $p\mathcal U_X^{max}$ on $X$ exists and {\it the maximal $G$-compactification}  $(\beta_G X, \jmath_{\beta})$ is the completion of $(X, p\mathcal U_X^{max})$. $p\mathcal U_X^{max}$ is the {\it precompact replica}~\cite{Isbell} of the maximal equiuniformity  $\mathcal U_X^{max}$ on $X$ and $\beta_G X$ is the {\it Samuel compactification of $(X, \mathcal U_X^{max})$} (see~\cite[Ch.\ 8, Problem 8.5.7]{Engelking}).

The maximal equiuniformity $\mathcal U_{G/H}$ on a coset space $G/H$ of a topological group $G$ ($H$ is a closed subgroup of $G$, action by multiplication on the left) is described in~\cite{ChK} or~\cite{ChK2}. There such actions are called {\it open} and the covers of the base of the maximal equiuniformity are of the form 
$$\{Ox\ |\ x\in G/H\},\ O\in N_G(e).$$

\medskip

If $X$ is a topological space, $\Hom(X)$ is the group of its homeomorphisms, then a group $G$ effectively acts on $X$ if $G\subset\Hom(X)$. If $X$ is a discrete space, then $\Hom(X)={\rm S}(X)$.

A topology in which a group $G$ is a topological group and its (effective) action $G\curvearrowright X$ is continuous is called an {\it admissible group topology}~\cite{Arens}. For an action of $G$ on a compact space $X$ the {\it compact-open topology} $\tau_{co}$ is the least admissible group topology on $G$~\cite{Arens}. 

If the {\it topology of pointwise convergence} $\tau_p$ (the subbase is formed by sets of the form $[x, O]=\{g\in G\ |\ gx\in O\}$, $x\in X$, $O$ is open in $X$) is an admissible group topology on $G$ for its action $G\curvearrowright X$, then it is the least admissible group topology \cite[Lemma 3.1]{Kozlov}. 

For a uniform space $(X, \mathcal L)$ an action $\theta: G\times (X, \mathcal L)\to  (X, \mathcal L)$ is {\it uniformly equicontinuous} if $\{\theta^g: X\to X,\ \theta^g(x)=\theta (g, x)\ |\ g\in G\}$  is a uniformly equicontinuous family of maps on $X$. Equivalently, for any $u\in\mathcal L$ there exists $v\in\mathcal L$  such that $gv=\{gV\ |\ V\in v\}\succ u$, for any $g\in G$. 
If an action  $G\curvearrowright (X, \mathcal L)$ is uniformly equicontinuous, then the topology of pointwise convergence $\tau_p$ is the least admissible group topology on $G$~\cite[Lemma 3.1]{Kozlov}. Moreover, $((G, \tau_p), X, \theta)$ is a $G$-Tychonoff space, see, for instance, \cite{Megr1984}. 

For an action of $G$ on a discrete space $X$, the topology of pointwise convergence $\tau_p$ (also called {\it permutation topology} $\tau_{\partial}$ in this case, and the subbase is formed by sets of the form $[x, y]=\{g\in G\ |\ gx=y \}$, $x, y\in X$) is an admissible group topology. $(G, \tau_{\partial})$ is {\it non-Archimedean} (a unit nbd base is formed by clopen subgroups (stabilizers)) and the $G$-space $((G, \tau_{\partial}), X, \curvearrowright)$ is $G$-Tychonoff.

The extremal properties of topologies $\tau_p$ and $\tau_{co}$ yield that if $\tau_p$ is an admissible group topology on $G$ acting on a compact space $X$, then $\tau_p=\tau_{co}$.

\begin{rem}{\rm 
Example from~{\rm\cite{Megr1988}} shows that $\tau_p$ may be an admissible group topology on a group $G$ of homeomorphisms of $X$ acting effectively, but the $G$-space $(G=(G, \tau_p), X,  \curvearrowright)$ may not be $G$-Tychonoff.}
\end{rem}

\begin{lem}\label{l1} Let $(G=(G, \tau_p), X, \theta)$ be $G$-Tychonoff and $b X$ is a $G$-compactification of $X$. Then the topology of pointwise convergence $\tau_p^{b X}$ for the extended action $\tilde\theta: G\times  b X\to b X$ is the least admissible group topology on $G$ and $\tau_p^{b X}=\tau_p$.
\end{lem}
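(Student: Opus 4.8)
The plan is to deduce both assertions from the single equality $\tau_p^{bX}=\tau_p$, which I would establish by two opposite inclusions of topologies, and then read off minimality from the extremal property of pointwise convergence on a compact phase space recorded above. The one structural input I would isolate at the start is that, since $bX$ is a $G$-compactification of the $G$-Tychonoff space $X$ with $G=(G,\tau_p)$, the action extends to a \emph{continuous} action $\tilde\theta:(G,\tau_p)\times bX\to bX$, and $b:X\to bX$ is an equivariant dense embedding onto an invariant subspace $b(X)$. In particular $\tau_p$ is already an admissible group topology for the action $G\curvearrowright bX$.

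For the inclusion $\tau_p\subset\tau_p^{bX}$ I would argue at the level of subbases. A subbasic $\tau_p$-set is $[x,O]=\{g\in G\mid gx\in O\}$ with $x\in X$ and $O$ open in $X$; writing $b(O)=W\cap b(X)$ for some $W$ open in $bX$ and using that $\tilde\theta(g,b(x))\in b(X)$ for every $g$ (invariance of $b(X)$), one gets $[x,O]=\{g\mid\tilde\theta(g,b(x))\in W\}$, which is subbasic for $\tau_p^{bX}$. Equivalently, $\tau_p$ is the initial topology of the orbit maps $\theta_x$ ($x\in X$), and, $b$ being an embedding, this family induces the same topology as $\{\tilde\theta_{b(x)}\mid x\in X\}$, a subfamily of the orbit maps defining $\tau_p^{bX}$; hence $\tau_p\subset\tau_p^{bX}$.

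For the reverse inclusion $\tau_p^{bX}\subset\tau_p$ I would use continuity of $\tilde\theta$ in the original topology: restricting to $G\times\{y\}$ shows that every orbit map $\tilde\theta_y:(G,\tau_p)\to bX$ ($y\in bX$) is continuous, so $\tau_p$ makes all of these maps continuous. Since $\tau_p^{bX}$ is by definition the coarsest topology on $G$ with this property, $\tau_p^{bX}\subset\tau_p$. Combining the two inclusions yields $\tau_p^{bX}=\tau_p$. For minimality, I would then note that $\tau_p^{bX}=\tau_p$ makes $(G,\tau_p)$ a topological group and $\tilde\theta$ continuous, so $\tau_p^{bX}$ is an admissible group topology for the action on the \emph{compact} space $bX$; by the extremal property recorded above, an admissible pointwise-convergence topology on a compact phase space coincides with the compact-open topology and is the least admissible group topology, whence $\tau_p^{bX}=\tau_{co}$ is the least admissible group topology on $G$.

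The step I expect to carry the real weight is the reverse inclusion: it rests entirely on the extended action being continuous in the \emph{original} topology $\tau_p$, which is precisely the content of $bX$ being a $G$-compactification rather than merely a compactification. Elsewhere the only care needed is the bookkeeping of the embedding $b$ and the invariance of $b(X)$, so that traces of open subsets of $bX$ recover open subsets of $X$ and the orbit maps through points of $X$ appear as a subfamily of those through points of $bX$.
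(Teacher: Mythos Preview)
Your proof is correct. Both inclusions are established cleanly: the forward inclusion via the subbase computation (using that $b$ is an equivariant embedding and $b(X)$ is invariant), and the reverse inclusion via the initial-topology characterisation of $\tau_p^{bX}$ together with continuity of $\tilde\theta$ in the original topology $\tau_p$. The minimality claim then follows exactly as you say.

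The paper takes a slightly different route. Rather than proving the two inclusions $\tau_p\subset\tau_p^{bX}$ and $\tau_p^{bX}\subset\tau_p$ directly, it sandwiches everything through the compact-open topology $\tau_{co}$ on $G$ for the action on the compact space $bX$: since $\tau_{co}$ is the least admissible group topology and $\tau_p$ is admissible, $\tau_{co}\subset\tau_p$; and ``evidently'' $\tau_p\subset\tau_p^{bX}\subset\tau_{co}$, so all three coincide. Your argument unpacks what the paper's ``evidently'' means (your first inclusion is exactly $\tau_p\subset\tau_p^{bX}$, and $\tau_p^{bX}\subset\tau_{co}$ is the standard fact that pointwise convergence is coarser than compact-open), while avoiding any explicit appeal to $\tau_{co}$ until the very end. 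The paper's version is terser and gets minimality for free as part of the sandwich; yours is more explicit about the mechanism and would work verbatim to show $\tau_p^{bX}=\tau_p$ even for a non-compact $G$-extension (where the minimality clause would of course no longer be available).
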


\begin{proof} The compact-open topology $\tau_{co}$ induced by the extended action $\tilde\theta$ is the least  admissible group topology on $G$ and $\tau_p$ is an admissible group topology on $G$. Therefore, $\tau_p\geq\tau_{co}$. Evidently, $\tau_{co}\geq\tau_p^{b X}\geq\tau_p$. Hence, $\tau_p^{b X}=\tau_p$.
\end{proof}

\medskip

Let $G=(G, \tau)$ be a topological group. The $G$-space $(G, G, \alpha)$  with action $\alpha$ by multiplication on the left is $G$-Tychonoff and $\tau=\tau_p$. Indeed, $\tau_p$ is an admissible group topology since the action is uniformly equicontinuous with respect to the left uniformity $L$ on $G$~\cite[Ch. X, § 3, item 5]{Burb} and $\tau\geq\tau_p$. If $O\in N_{(G, \tau)}(e)$, then the set $[e, O]\in\tau_p$ and $[e, O]=O$. Thus,  $\tau=\tau_p$. If $b G$ is a $G$-compactification of $G$-space $(G, G, \alpha)$, then $\tilde\alpha: G\times b G\to b G$ is the extension of $\alpha$. 


\subsection{Uniformities on topological groups}\label{unifgr}

On a topological group $G$ four uniformities are well-known. The {\it right uniformity} $R$ (the base is formed by the covers $\{Og=\bigcup\{hg\ |\ h\in O\}\ |\ g\in G\}$, $O\in N_G(e)$), the {\it left uniformity} $L$ (the base is formed by the covers $\{gO=\bigcup\{gh\ |\ h\in O\}\ |\ g\in G\}$, $O\in N_G(e)$), the {\it two sided uniformity} $R\vee L$ (the least upper bound of the right and left uniformities) and the {\it Roelcke uniformity} $L\wedge R$ (the greatest lower bound of the right and left uniformities) (the base is formed by the covers $\{OgO=\bigcup\{hgh'\ |\ h, h'\in O\}\ |\ g\in G\}$, $O\in N_G(e)$). A group  $G$ is {\it Roelcke precompact} if Roelcke uniformity is totally bounded. All the necessary information about these uniformities can be found in~\cite{RD}. 

{\it Roelcke compactification} $b_r G$ of $G$ is the Samuel compactification of $(G, L\wedge R)$, i.e. the completion of $(G, (L\wedge R)_{fin})$, where $(L\wedge R)_{fin}$ is the {\it precompact replica} of $L\wedge R$. If $G$ is Roelcke precompact, then $(L\wedge R)_{fin}=L\wedge R$ and {\it Roelcke compactification} $b_r G$ is the completion of $(G, L\wedge R)$.

For every equiuniformity $\mathcal U$ on $G$ of a $G$-space $(G, G, \alpha)$ one has $\mathcal U\subset R$, $L\wedge R$  is an equiuniformity and $R$ is the maximal equiunuformity.

\medskip

{\it Uniformity on $G$ from a family of small subgroups is introduced in~{\rm\cite[\S\ 4]{Kozlov}}.}
Let  $(G=(G, \tau_p), X,  \curvearrowright)$ be a $G$-space.
The family of stabilizers  
$$\st_{\sigma}=\bigcap\{\st_{x}\ |\ x\in\sigma\},\ \sigma\in\Sigma_X,$$ 
is a {\it directed family  of small} (closed) {\it subgroups of $G$} (i.e. $\forall\ O\in N_G(e)$ $\exists\ \sigma\in\Sigma_X$ such that $\st_{\sigma}\subset O$; $\st_{\sigma}\leq\st_{\sigma'}\Longleftrightarrow \sigma\leq\sigma'$)  which defines the uniformity $R_{\Sigma_X}$ on $G$. The base of  $R_{\Sigma_X}$ is formed by the covers 
$$\{Og\st_{\sigma}=\bigcup\{hgh'\ |\ h\in O,\ h'\in\st_{\sigma}\}\ |\ g\in G\},\ O\in N_G(e),\  \sigma\in\Sigma_X.$$  
Uniformity  $R_{\Sigma_X}$ is an equiuniformity (for the action $\alpha$) and has a "spectral" representation. It is initial~\cite[Proposition-Definition 0.16]{RD} with respect to the quotient maps $q_{\sigma}: G\to (G/\st_{\sigma}, \mathcal U_{G/\st_{\sigma}})$  (cosets $G/\st_{\sigma}$ with the maximal equiuniformities $\mathcal U_{G/\st_{\sigma}}$ (the quotient uniformities of the right uniformity $R$ on $G$)~\cite[Theorem 5.21]{RD} and~\cite{ChK}), $\sigma\in\Sigma_X$.
 
\begin{rem}{\rm\cite[Corollary 4.5]{Kozlov}}\label{a2-1}
{\rm (A) $L\wedge R\subset R_{\Sigma_X}\subset R$. Hence, if the uniformity $R_{\Sigma_X}$ is totally bounded, then $G$ is Roelcke precompact. Other results that guarantee the Roelcke precompactness of an acting group $G$ are the following. 

(a) If a (closed) subgroup $H$ of a topological group $G$ is Roelcke precompact and the maximal equiuniformity on the coset space $G/H$ is totally bounded, then $G$ is Roelcke precompact~\cite[Proposition 6.4]{Megr}.

(b) If a topological group $G$ has a directed family  $\mathcal K$ of small subgroups such that the maximal equiuniformity on a coset space $G/H$ is totally bounded, $H\in\mathcal K$, then $G$ is Roelcke precompact~\cite[Corollary 4.5]{Kozlov}.

(c) If the action  $G\curvearrowright X$, where $X$ is a discrete space and group $G$ is in the topology of pointwise convergence, is oligomorphic (equivalently, the maximal equiuniformity on $X$ is totally bounded~\cite[Theorem 3.1]{Sorin2025}),  then $G$ is Roelcke precompact~\cite {Tsan}.

\medskip

\rm (B) For a group $G$ {\rm(}acting on a discrete space $X${\rm)} in the topology of pointwise convergence the stabilizers $\st_{\sigma}$, $\sigma\in\Sigma_X$, are clopen subgroups {\rm(}nbds of the unit{\rm)} and $L\wedge R=R_{\Sigma_X}$. Indeed, the base of the Roelcke uniformity $L\wedge R$ on $G$ in this case is formed by the covers
$$\{\st_{\sigma}g\st_{\sigma}\ |\ g\in G\},\ \sigma\in\Sigma_X,$$
which coincides with the base of $R_{\Sigma_X}$.}
\end{rem}


\subsection{Semigroups}\label{semgr}
A {\it semigroup} $(S, \bullet)$ is a set $S$ with an associative internal binary operation (multiplication $\bullet$). Monoid $M$ is a semigroup with identity $e$: $\forall\ x\in M\ (e\bullet x=x\bullet e=x)$. Every group is a monoid.  

A semigroup (monoid) $S$ is an {\it inverse semigroup} ({\it inverse monoid}) if 
$$\forall\ a\in S\  \exists\ \mbox{the unique}\ a^*\in S\  \mbox{such that}\ a\bullet a^*\bullet a=a\ \&\ a^*\bullet a\bullet a^*=a^*,$$
$a^*$ is an inverse of $a$. Every inverse semigroup (monoid) is a semigroup (monoid) with involution $s: a\to a^*$.

The {\it symmetric inverse semigroup} $I_X$ on $X$ is the collection of all partial bijections between subsets of $X$ with composition as the binary operation (see, for example,~\cite{CliffPrest} or~\cite{Lawson}). The symmetric inverse semigroup is an inverse monoid. 

A map $s: (S, \bullet)\to (S', \bullet')$ of semigroups is a {\it homomorphism} if $s(x\bullet y)=s(x)\bullet' s(y)$ for all $x, y\in S$. If $G$ is a group and $s: G\to (S, \bullet)$ is a homomorphism, then $s(G)$ is a group and a subsemigroup of $S$. See,  for instance, \cite{CliffPrest} and~\cite{Lawson} for additional information. 

\medskip

A  {\it semitopological} ({\it right topological}) {\it semigroup} ({\it monoid}) is a semigroup  (monoid) $(S, \bullet)$ on a topological space $S$ with separately continuous multiplication  $\bullet$ (multiplication $\bullet$ is continuous on the right (i.e. $\forall\ x\in S$ the map $\bullet_x: S\to S$, $\bullet_x(y)=y\bullet x$ is continuous)). A  {\it semitopological inverse semigroup} ({\it inverse monoid}) is an inverse semigroup (inverse monoid)  $(S, \bullet)$ on a topological space $S$ with separately continuous multiplication $\bullet$. All the necessary (and additional) information about semigroups which are topological spaces can be found, for instance, in~\cite{BergJungMiln} and~\cite{ArhTk}.

\medskip

$I$ is an {\it ideal} in the semigroup $(S, \bullet)$ if $x\bullet I=\{x\bullet y\ |\ y\in I\}\subset I$ and  $I\bullet x\subset I$ for all $x\in S$. Let $I$ be an ideal in the semigroup $(S, \bullet)$ with $0$  (if there is no $0$ in $S$ it can be added with the natural extension of multiplication $0\bullet x=x\bullet 0=0$, $x\in S$,  and involution $s(0)=0$ if $S$ is a semigroup with involution). Define a relation $\rho_I$ on $S$ 
$$(x, y)\in\rho_I\ \Longleftrightarrow\ \mbox{either}\ x, y\in I\ \mbox{or}\ x=y.$$
Then $\rho_I$ is a {\it congruence} on $S$. The quotient $S/\rho_I$ is isomorphic to the set $(S\setminus I)\cup\{0\}$ equipped with the following multiplication $\bullet_I$: if $x, y\in S\setminus I$, then $x\bullet_I y=x\bullet y$ if $x\bullet y\in S\setminus I$ and   $x\bullet_I y=0$ in all other cases. $(S/\rho_I, \bullet_I)$ is a semigroup with $0$. Such quotients are called {\it Rees quotients} (see, for instance, \cite{Lawson}).

If $s$ is an involution on $(S, \bullet)$, $I$ is an ideal, then $s(I)$ is an ideal ($\forall\ x\in S$ $s(I)\bullet x=s(s(x)\bullet I)\subset s(I)$ and $x\bullet s(I)=s(I\bullet s(x))\subset s(I)$). If $s(I)=I$, then the map $s_I: S/\rho_I\to S/\rho_I$, $s_I(x)=s(x)$ if $x\not\in I$, $s_I(0)=0$ is an involution on $S/\rho_I$. Indeed, $s_I\circ s_I$ is the identity map on $ S/\rho_I$, and 
$$s_I(x\bullet y)=\left\{\begin{array}{ccc}
0=s_I(y)\bullet s_I(x) & \mbox{if} &  x\bullet y=0, \\
s(x\bullet y)=s(y)\bullet s(x)=s_I(y)\bullet s_I(x) & \mbox{if} &  x\bullet y\ne 0. \\
\end{array}
\right.$$

If $(S, \bullet)$ is an inverse semigroup, $I$ is an ideal, then $I^*=I$ ($I$ is invariant under the involution defined by the inverse). Indeed, since $x^*\bullet x\bullet x^*=x^*$, if $x\in I$, then $x^*\bullet x\bullet x^*\in I$ and, hence, $x^*\in I$, $I^*\subset I$. The opposite inclusion is proved similarly. 

\begin{lem}\label{ideals}
{\rm(a)} If $(S, \bullet)$ is a right topological semigroup {\rm(}monoid{\rm)}, $I$ is a closed in $S$ ideal, then $(S/\rho_I, \bullet_I)$ in the quotient topology is a right topological semigroup {\rm(}monoid{\rm)} with $0$.

{\rm(b)} If $(S, \bullet)$ is a semitopological semigroup {\rm(}monoid{\rm)}, $I$ is a closed in $S$ ideal, then $(S/\rho_I, \bullet_I)$ in the quotient topology is a semitopological semigroup {\rm(}monoid{\rm)} with $0$. If, additionally, $s$ is a {\rm(}continuous{\rm)} involution on $(S, \bullet)$ and $s(I)=I$, then $s_I$ is a {\rm(}continuous{\rm)} involution on $S/\rho_I$.

{\rm(c)} If $(S, \bullet)$ is a semitopological inverse semigroup {\rm(}monoid{\rm)} with continuous inverse, $I$ is a closed in $S$ ideal, then $(S/\rho_I, \bullet_I)$  in the quotient topology is a semitopological inverse semigroup {\rm(}monoid{\rm)} with $0$. 
\end{lem}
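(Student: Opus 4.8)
The plan is to treat all three parts uniformly by working through the canonical surjection $\pi\colon S\to S/\rho_I$ that collapses $I$ to $0$ and is the identity off $I$. First I would record three elementary facts. (i) By the definition of $\bullet_I$, the map $\pi$ is a semigroup homomorphism, $\pi(x)\bullet_I\pi(y)=\pi(x\bullet y)$ for all $x,y\in S$; this is immediate from the two defining cases of $\bullet_I$ together with $I$ being an ideal. (ii) Since $S/\rho_I$ carries the quotient topology, $\pi$ is a topological quotient map, so for any space $Z$ and any map $g\colon S/\rho_I\to Z$ one has $g$ continuous iff $g\circ\pi$ is continuous (the universal property of quotient maps). (iii) For a fixed $c\in S/\rho_I$, choosing $b\in S$ with $\pi(b)=c$, the translations satisfy $R_c\circ\pi=\pi\circ R_b^{S}$ and $L_c\circ\pi=\pi\circ L_b^{S}$, where $R_b^{S},L_b^{S}\colon S\to S$ are the translations in $S$; this is just (i) rewritten. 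When $c=0$ one takes $b\in I$ and both composites are the constant map $0$, using that $I$ is an ideal.

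Granting (i)--(iii), the continuity of the operations is then formal. For (a): each $R_b^{S}$ is continuous because $S$ is right topological, hence $R_c\circ\pi=\pi\circ R_b^{S}$ is continuous, so $R_c$ is continuous by (ii); thus $S/\rho_I$ is right topological, $0$ is a zero by construction, and if $S$ is a monoid with identity $e\notin I$ then $\pi(e)$ is an identity of $S/\rho_I$ (if $e\in I$ then $I=S$ and the quotient is trivial). For (b): the same argument applied to $L_b^{S}$, continuous since $S$ is semitopological, gives continuity of every $L_c$, so $S/\rho_I$ is semitopological; and when $s(I)=I$ the involution $s$ is a bijection carrying $S\setminus I$ onto itself, so $s_I\circ\pi=\pi\circ s$, which is continuous whenever $s$ is, whence $s_I$ is continuous by (ii). For (c): since $I$ is an ideal of an inverse semigroup one already has $I^{*}=I$, so $s_I$ is the descended involution and is continuous by the computation just made; that $S/\rho_I$ is again an inverse semigroup is the algebraic statement that a Rees quotient of an inverse semigroup is inverse (regularity passes to $\pi$-images, the idempotents of $S/\rho_I$ are images of idempotents of $S$ together with $0$ and hence commute, $0$ being a central idempotent), cf.~\cite{Lawson}. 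Combining these yields (c).

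The one genuinely topological point --- and the place where compactness of $I$ is indispensable --- is that $S/\rho_I$ must be Tychonoff, so that the conclusions live in the ambient category of Tychonoff (resp.\ compact) spaces; I expect this to be the main obstacle, since the semigroup-theoretic content reduces to the universal property. I would argue it separately. Continuous real-valued functions on $S/\rho_I$ are exactly the continuous functions on $S$ that are constant on $I$, so it suffices to functionally separate points from disjoint closed sets of $S/\rho_I$. As $S$ is Tychonoff and $I$ is compact, $I$ is closed, and I would invoke the standard fact that in a Tychonoff space a compact set and a disjoint closed set are functionally separated. A short case analysis --- the distinguished point $0$ versus a closed set, and an ordinary point versus a closed set, according to whether the set contains $0$ --- produces in each case a continuous $f\colon S\to[0,1]$ constant on $I$ with the required values, which descends through $\pi$; together with the evident $T_1$ property this gives complete regularity. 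In the situation of the applications, where $S$ is already compact Hausdorff and $I$ is closed, this simplifies at once: $\rho_I$ equals $\Delta_S\cup(I\times I)$, which is closed in $S\times S$, so the quotient of the compact Hausdorff space $S$ by a closed equivalence relation is compact Hausdorff.
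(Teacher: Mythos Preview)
Your argument is correct and in fact cleaner than the paper's. The paper verifies continuity of each right translation $(\bullet_I)_{q(x)}$ by hand: it fixes a basic neighbourhood $q(O)$ of a point and, in the delicate case where the point is $0=q(I)$, uses compactness of $I$ to manufacture a saturated open set $W\supset I$ with $\bullet_x(W)\subset O$, so that $q(W)$ is an honest neighbourhood of $0$. Your route sidesteps this entirely: from $R_c\circ\pi=\pi\circ R_b^{S}$ and the universal property of the quotient map you get continuity of $R_c$ (and of $L_c$, and of $s_I$) with no appeal to compactness at all. This is a genuine simplification; the paper's use of compactness in the continuity step is not actually needed.

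Where your treatment adds value is in isolating precisely where compactness \emph{is} needed, namely to keep $S/\rho_I$ inside the Tychonoff category that the paper works in throughout. The paper's proof is silent on this point. Your separation argument (functionally separate the compact $I$ from disjoint closed sets, then descend) is the right way to fill that gap, and your observation that in the compact-Hausdorff applications the closedness of $\rho_I=\Delta_S\cup(I\times I)$ settles the matter immediately is exactly what one wants for the later uses of the lemma. So the two proofs reach the same conclusion, but yours localises the role of the hypothesis more sharply.
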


\begin{proof}
Let $q: S\to S/\rho_I$ be the quotient map (homomorphism of semigroups). Then $q\circ\bullet=\bullet_I\circ (q\times q)$.
$q(I)$ is $0$ in $S/\rho_I$. If $S$ is a monoid with identity $e$, then $q(e)$ is the identity in $S/\rho_I$. 

\medskip

The restriction $q|_{S\setminus I}$ is a bijection of $S\setminus I$ onto $S/\rho_I\setminus\{0\}$. Therefore, if $O$ is open in $S$ and $O\cap I=\emptyset$ or $I\subset O$, then $q(O)$ is open in $S/\rho_I$. 

$\forall\ y\in S/\rho_I$ one has $y\bullet_I 0=0$ and the constant map $(\bullet_I )_0(y)=y\bullet_I 0$ is continuous. 

For $q(x)\in S/\rho_I$, $x\ne I$, 
$$(\bullet_I )_{q(x)} (q(y))=\left\{\begin{array}{ccc}
0 & \mbox{if} &  y\bullet x\in I, \\
q(\bullet_x (y)) & \mbox{if} &  y\bullet x\not\in I. \\
\end{array}
\right.$$
If $(\bullet_I )_{q(x)} (q(y))=0$, then arbitrary nbd of $0$ can be assumed to be $q(O)$ where $O$ is open in $S$, $I\subset O$. Since $(\bullet_I )_{q(x)} (q(y))=q(\bullet_x (y))=0$, if $y\not\in I$ there is its nbd $W_y$ such that $W_y\cap I=\emptyset$, $\bullet_x (W_y)\subset O$. Then $q(W_y)$ is a nbd of $q(y)$ and $(\bullet_I )_{q(x)}(q(W_y))\subset q(O)$. 

If $y\in I$, then $\forall\ z\in I$ $\exists$ its nbd $W_z$ such that $\bullet_x (W_z)\subset O$. The union $W$ of a family of nbds $W_{z}$, $z\in I$, contains $I$. $W$ is a nbd of $y$ and $\bullet_x (W)\subset O$. Hence, $q(W)$ is a nbd of $q(y)=0$ and $(\bullet_I )_{q(x)} (q(W))\subset q(O)$.

If $(\bullet_I )_{q(x)} (q(y))\ne 0$, then $y\not\in I$ and one can use the above arguments. The maps $(\bullet_I )_{q(x)}$ are continuous for all $q(x)\in S/\rho_I$ and $S/\rho_I$ is a right topological semigroup. 

\medskip

The same arguments shows that if $(S, \bullet)$ is a semitopological semigroup, then $(S/\rho_I, \bullet_I)$ is a semitopological semigroup. 

\medskip

If $s$ is an involution on $(S, \bullet)$, then the involution $s_I$ is continuous. Indeed, $s_I(0=q(I))=0$, $s_I(q(x))=q(s(x))$, $x\not\in I$. An arbitrary nbd of $0$ can be $q(O)$ where $O$ is open in $S$, $I\subset O$. Since $s$ is continuous, one can find a nbd $W$ of $I$ such that $s(W)\subset O$ (as above). Hence, $q(W)$ is a nbd of $0$ and $s_I (q(W))\subset q(O)$.

If $x\not\in I$, then $\forall$ nbd $O_{s(x)}$ of $s(x)$, $O_{s(x)}\cap I=\emptyset$, $\exists$ nbd $W_x$ such that $W_x\cap I=\emptyset$ and $s(W_x)\subset O$. Therefore, $s_I (q(W_x))\subset O_{s(x)}$.

\medskip

The last statement follows from the previous ones and the fact that the Rees quotient of an inverse semigroup is an inverse semigroup. 
\end{proof}

\begin{rem}
{\rm The map $q: S\to S/\rho_I$ is an elementary map.}
\end{rem}


\section{Ellis compactification of a group from its $\tau_p$-representation in a compact space}\label{Elliscmp}

\begin{df}~\cite[Definition 3.10]{KozlovLeiderman} {\rm(}see, also, {\rm\cite[Definition 2.1]{KozlovSorin})}\label{defElliscomp}
Let $G$ be a topological group.  An Ellis compactification $b G$ of $G$ is a $G$-compactification of $G$ such that $(bG, \bullet)$ is a right topological monoid and $\bullet|_{G\times b G}=\tilde\alpha$, where $\tilde\alpha$ is the extension of the action $\alpha$ of $G$ on itself to $b G$. 
\end{df}

$\mathbb{E} (G)$ is the poset of Ellis compactifications of $G$.

\medskip

Since the Cartesian product $X^X$ may be regarded as the set of self-maps of $X$ ($f=(f(x))_{x\in X}\in X^X$), $X^X$ is a monoid (mu{\rm l}tiplication is composition $\circ$ of maps, the identity map of $X$ is the identity in $X^X$).

If $X$ is a topological space, then $(X^X, \circ)$ is a right topological monoid (multiplication on $X^X$ in the Tychonoff topology is continuous on the right).   

\begin{pro}\cite[\S\ 2.7 Constructions,\ 2]{Vries}
Let $(G, X, \theta)$ be a $G$-space. Then $(G, X^X, \theta_{\Delta X})$ is a $G$-space, where 
$$\theta_{\Delta X} (g,  f)(x)=\theta (g, f(x)),\ x\in X,$$
is the diagonal action {\rm(}induced by $\theta${\rm)}.
\end{pro}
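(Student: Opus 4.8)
The plan is to verify the two defining requirements of a $G$-space separately: that $\theta_{\Delta X}$ satisfies the group-action identities, and that it is continuous as a map $G\times X^X\to X^X$, where $X^X$ carries the Tychonoff (product) topology. Both reduce to properties of $\theta$ together with the universal property of the product.

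First I would check the algebraic axioms, which follow immediately from the fact that $\theta$ is an action. For the unit $e\in G$ and any $f\in X^X$ one has $\theta_{\Delta X}(e,f)(x)=\theta(e,f(x))=f(x)$ for every $x\in X$, so $\theta_{\Delta X}(e,f)=f$. For $g,h\in G$ and $f\in X^X$,
$$\theta_{\Delta X}\bigl(g,\theta_{\Delta X}(h,f)\bigr)(x)=\theta\bigl(g,\theta(h,f(x))\bigr)=\theta(gh,f(x))=\theta_{\Delta X}(gh,f)(x)$$
for every $x\in X$, whence $\theta_{\Delta X}(g,\theta_{\Delta X}(h,f))=\theta_{\Delta X}(gh,f)$. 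Thus $\theta_{\Delta X}$ is a left action of $G$ on $X^X$.

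The substantive step is continuity, and here I would invoke the universal property of the product topology. Let $\pr_x:X^X\to X$, $\pr_x(f)=f(x)$, be the coordinate projection. Since the topology on $X^X$ is initial with respect to the family $\{\pr_x\ |\ x\in X\}$, the map $\theta_{\Delta X}$ is continuous if and only if $\pr_x\circ\theta_{\Delta X}$ is continuous for each $x\in X$. I would then observe that $\pr_x\circ\theta_{\Delta X}$ factors as the composition
$$G\times X^X\xrightarrow{\ \id_G\times\pr_x\ }G\times X\xrightarrow{\ \theta\ }X,$$
since $(\pr_x\circ\theta_{\Delta X})(g,f)=\theta_{\Delta X}(g,f)(x)=\theta(g,f(x))=\theta\bigl(g,\pr_x(f)\bigr)$. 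Both factors are continuous: $\id_G\times\pr_x$ because $\id_G$ and the projection $\pr_x$ are continuous, and $\theta$ by the hypothesis that $X$ is a $G$-space. Hence $\pr_x\circ\theta_{\Delta X}$ is continuous for every $x$, and therefore $\theta_{\Delta X}$ is continuous, so $(G,X^X,\theta_{\Delta X})$ is a $G$-space.

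I do not expect a genuine obstacle here: the algebraic identities are forced by the action axioms for $\theta$, and the continuity argument is clean once one passes to coordinate projections. The only point where a direct attempt could become awkward is verifying continuity against the subbasic open sets $\{f\in X^X\ |\ f(x)\in O\}$ of $X^X$ by hand; reducing instead to the coordinatewise maps $\theta\circ(\id_G\times\pr_x)$ and appealing to the initiality of the product topology sidesteps this entirely and makes the verification transparent.
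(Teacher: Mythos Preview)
Your proof is correct: the algebraic identities follow directly from those of $\theta$, and continuity is cleanly handled via the initiality of the product topology and the factorization $\pr_x\circ\theta_{\Delta X}=\theta\circ(\id_G\times\pr_x)$. The paper itself does not prove this proposition but simply cites it from de~Vries, so there is no proof in the paper to compare against; your argument is the standard one and would be what any reader reconstructs.
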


\begin{pro}~\cite[Proposition 4.2]{KozlovLeiderman}  {\rm(}see, also, {\rm\cite[Proposition 2.6, Theorem 2.7]{KozlovSorin})}\label{diagactalg}
Let $(G=(G, \tau_p), X, \theta)$ be a $G$-space. Then 
\begin{itemize}
\item[(1)] the map
$$\imath_X: G\to X^X,\ \imath_X (g)(x)=\theta (g, x),\ x\in X,$$
is a topological isomorphism of $G$ onto the subsemigroup of $X^X$.
\item[(2)]  $\theta_{\Delta X} (g,  f)=\imath_X (g)\circ f$,
\item[(3)]  $\imath_X$ is a $G$-map of $G$-spaces $(G, G, \alpha)$ and $(G, X^X, \theta_{\Delta X})$, the following diagram is commutative 
$$\begin{array}{ccc}
\quad G\times G & \stackrel{{\id\times \imath_X}}\hookrightarrow & G\times X^X \\
 \alpha  \downarrow &   & \quad \downarrow \theta_{\Delta X}\\
\quad G & \stackrel{\imath_X}\hookrightarrow & X^X.
\end{array}$$
\end{itemize}
\end{pro}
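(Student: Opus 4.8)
The plan is to verify the three assertions in order, with the only substantive point being the embedding claim in (1), where the hypothesis that $G$ carries the topology of pointwise convergence $\tau_p$ does all the work; (2) and (3) are then formal consequences.

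First I would treat (1). That $\imath_X$ is a monoid homomorphism is a direct unwinding of the action axiom: for $g, h\in G$ and $x\in X$,
$$\imath_X(gh)(x)=\theta(gh, x)=\theta(g, \theta(h, x))=\imath_X(g)(\imath_X(h)(x)),$$
so $\imath_X(gh)=\imath_X(g)\circ\imath_X(h)$, and $\imath_X(e)$ is the identity map. Injectivity is exactly effectiveness of the action: if $\imath_X(g)=\imath_X(h)$, then $gx=hx$ for every $x$, whence $h^{-1}g$ lies in the kernel $\{e\}$ and $g=h$. For continuity I would compose with the coordinate projections $\pr_x\colon X^X\to X$: since $\pr_x\circ\imath_X=\theta_x$ is the orbit map and the action is continuous, each $\pr_x\circ\imath_X$ is continuous, so $\imath_X$ is continuous by the universal property of the Tychonoff product.

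The heart of (1) is that $\imath_X$ is a topological embedding, and here I would use $\tau_p$ directly. The subbasic product-open sets of $X^X$ are $\pr_x^{-1}(O)=\{f\in X^X\ |\ f(x)\in O\}$ for $x\in X$ and $O$ open in $X$, and their $\imath_X$-preimages are
$$\imath_X^{-1}(\pr_x^{-1}(O))=\{g\in G\ |\ \theta(g, x)\in O\}=[x, O],$$
which is precisely the subbase of $\tau_p$. Thus $\tau_p$ coincides with the initial topology induced by $\imath_X$ from the product topology, so the continuous injection $\imath_X$ is a homeomorphism onto its image. Because $\imath_X$ is a homomorphism, that image is a subsemigroup (in fact a subgroup) of $X^X$, which yields the claimed topological isomorphism.

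Finally, (2) and (3) are formal. Statement (2) is just the definition of the diagonal action read through $\imath_X$: $\theta_{\Delta X}(g, f)(x)=\theta(g, f(x))=(\imath_X(g)\circ f)(x)$ for all $x$. For (3), equivariance of $\imath_X$ from $(G, G, \alpha)$ to $(G, X^X, \theta_{\Delta X})$ amounts to $\imath_X(gh)=\theta_{\Delta X}(g, \imath_X(h))$, which combines the homomorphism identity from (1) with (2), since $\theta_{\Delta X}(g, \imath_X(h))=\imath_X(g)\circ\imath_X(h)=\imath_X(gh)$; this is exactly the commutativity of the displayed square. I expect no genuine obstacle here: every step is either an action axiom or the matching of the $\tau_p$-subbase with the product subbase, the latter being the one place where the pointwise-convergence hypothesis is indispensable.
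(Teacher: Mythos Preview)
Your argument is correct. The paper itself does not supply a proof of this proposition; it merely cites \cite[Proposition 4.2]{KozlovLeiderman} and \cite[Proposition 2.6, Theorem 2.7]{KozlovSorin}, so there is no in-paper proof to compare against. What you have written is the standard verification---matching the $\tau_p$-subbase $[x,O]$ with the product subbase $\pr_x^{-1}(O)$ for the embedding, and reading off (2) and (3) as formal consequences---and is exactly the kind of argument one would expect in the cited sources.
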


Since $\imath_X (G)$ is an invariant subset of the $G$-space $(G, X^X, \theta_{\Delta X})$, $\cl (\imath_X (G))$ is an invariant subset and a $G$-compactification of $G$ if $X$ is compact. $\cl (\imath_X (G))$ is a right topological monoid~\cite{Ellis}. The restriction of $\circ$ to $\imath_X (G)\times \cl (\imath_X (G))$ coincides with the extended action $\tilde\alpha: G\times \cl (\imath_X (G))\to \cl (\imath_X (G))$, $\tilde\alpha=\theta_{\Delta X}|_{G\times\cl (\imath_X (G))}$. This follows from the commutativity of the diagram 
$$\begin{array}{ccccc}
\quad G\times G & \stackrel{\id\times \imath_X}\hookrightarrow & G\times \cl (\imath_X (G)) &  \stackrel{\imath_X\times\id}\hookrightarrow    & \cl (\imath_X (G))\times \cl (\imath_X (G))\\
\alpha  \downarrow &   & \quad \downarrow \tilde\alpha  &  &  \quad \downarrow \circ \\
\quad G & \stackrel{\imath_X}\hookrightarrow & \cl (\imath_X (G)) &  \stackrel{\id} {\longrightarrow} & \cl (\imath_X (G)).
\end{array}$$
Hence,  $\cl (\imath_X (G))$ is an Ellis compactification of $G$. Notation $e_X G=\cl (\imath_X (G))$. 


\subsection{Maps of Ellis compactifications}

\begin{df}\label{taurepres}
A topological group $G=(G, \tau)$ is {\it $\tau_p$-representable in the {\rm(}Tychonoff{\rm)} space $X$} if there exists a continuous {\rm(}effective{\rm)} action $\theta: G\times X\to X$, $((G, \tau_p), X, \theta)$ is a $G$-Tychonoff space and $\tau_p=\tau$.
\end{df}

Let a topological group $G$ be $\tau_p$-representable in compact spaces $X$ and $Y$ ($(G, X, \theta_X)$ and $(G, Y, \theta_Y)$ are correspondent compact $G$-spaces), $\varphi: X\to Y$ is an onto (perfect) $G$-map and $\sim_{\varphi}$ is the equivalence relation on $X$ induced by $\varphi$
$$x\sim_{\varphi}y\ \Longleftrightarrow\ \varphi (x)=\varphi (y).$$
By $[x]$ (a compact subset), the equivalence class of $x\in X$ is denoted. 

Let $(e_X G, \imath_X)$, $(e_Y G, \imath_Y)$ be Ellis compactifications of $G$.  

\begin{lem}\label{mapeqc}
$\forall\ f\in e_X G$ and $\forall\ x\in X$ $f([x])\subset [f(x)]$.
\end{lem}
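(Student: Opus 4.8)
The plan is to reformulate the inclusion $f([x])\subset [f(x)]$ in terms of the defining $G$-map $\varphi$. Since $[x]=\varphi^{-1}(\varphi(x))$ and $[f(x)]=\varphi^{-1}(\varphi(f(x)))$, the assertion is equivalent to the implication
$$\varphi(y)=\varphi(x)\ \Longrightarrow\ \varphi(f(y))=\varphi(f(x)),$$
that is, to the statement that every $f\in e_X G$ carries $\sim_{\varphi}$-equivalent points to $\sim_{\varphi}$-equivalent points. So I fix $f\in e_X G$, a point $x\in X$, and an arbitrary $y\in [x]$, and I aim to prove $\varphi(f(y))=\varphi(f(x))$.

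The main tool is that $e_X G=\cl (\imath_X (G))$ is formed in the Tychonoff (pointwise convergence) topology on $X^X$. Hence there is a net $(g_\lambda)$ in $G$ with $\imath_X(g_\lambda)\to f$ pointwise; in particular $\theta_X(g_\lambda, x)\to f(x)$ and $\theta_X(g_\lambda, y)\to f(y)$ in $X$ along one and the same net. Because $\varphi$ is a $G$-map, for every $g\in G$ one has $\varphi(\theta_X(g, z))=\theta_Y(g, \varphi(z))$; applying this at $z=x$ and $z=y$ and using $\varphi(x)=\varphi(y)$ yields $\varphi(\theta_X(g_\lambda, x))=\theta_Y(g_\lambda, \varphi(x))=\theta_Y(g_\lambda, \varphi(y))=\varphi(\theta_X(g_\lambda, y))$ for every index $\lambda$. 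It then remains to pass to the limit: since $\varphi$ is continuous, $\varphi(\theta_X(g_\lambda, x))\to\varphi(f(x))$ and $\varphi(\theta_X(g_\lambda, y))\to\varphi(f(y))$ in $Y$, and the two nets coincide term by term, so they share their limits. As $Y$ is Hausdorff (being Tychonoff), limits are unique and $\varphi(f(x))=\varphi(f(y))$. Thus $f(y)\in [f(x)]$, and since $y\in [x]$ was arbitrary, $f([x])\subset [f(x)]$.

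I do not expect a serious obstacle here: the argument is a routine interchange of the equivariance relation with a pointwise limit. The only points demanding care are that convergence in $X^X$ is genuinely pointwise, so that a single net $(g_\lambda)$ simultaneously controls the behaviour at both $x$ and $y$, and that Hausdorffness of $Y$ is precisely what licenses equating the two limits. Notably, neither surjectivity nor perfectness of $\varphi$ enters this step; only that $\varphi$ is a continuous equivariant map into a Hausdorff space is used.
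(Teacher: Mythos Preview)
Your proof is correct and follows essentially the same approach as the paper: both establish that elements of $G$ preserve $\sim_{\varphi}$-classes (via equivariance of $\varphi$) and then extend this to all of $e_X G=\cl(\imath_X(G))$ by density. The only difference is presentational---you pass to the limit along a net and invoke Hausdorffness of $Y$, whereas the paper argues by contradiction using a subbasic open set $[x,U]\cap[y,V]$ disjoint from $\imath_X(G)$---but these are two equivalent formulations of the same closure argument.
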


\begin{proof}
Let $x\sim_{\varphi}y$. $\forall\ g\in G$ $\theta_X (g, x)\sim_{\varphi}\theta_X (g, y)$. Hence, $g([x])\subset [g(x)]$, $g\in G$. 

Take $f\in e_X G$ and assume that $\exists\ x, y\in X$ such that $x\sim_{\varphi} y$ but $[f(x)]\cap [f(y)]=\emptyset$. 
Let $U$ and $V$ be nbds of $ [f(x)]$ and $[f(y)]$ respectively, such that $\varphi (U)\cap\varphi (V)=\emptyset$. Then $f\in ([x, U]\cap [y, V])$. However, $([x, U]\cap [y, V])\cap\imath_X (G)=\emptyset$ and, therefore, $f\not\in  e_X G$. The obtained contradiction finishes the proof. 
\end{proof}

The Cartesian product $\varphi^X:   X^X\to  Y^X$ of maps $\varphi$ is correctly defined.  

Let a subset $Z\subset X$ consist of one representative from each equivalence class of $\sim_{\varphi}$. $\varphi_{Z}: Y^X\to  Y^Y$ is the composition of projection $\pr_{Y}: Y^X\to  Y^Z$ and the map $Y^Z\to Y^Y$ which identifies coordinates under the bijection $\varphi|_Z$. The composition $\Phi=\varphi_{Z}\circ\varphi^X: X^X\to Y^Y$ is correctly defined.  

\begin{pro}\label{mapelliscomp}
$\Phi|_{e_X G}: e_X G\to e_Y G$ is the $G$-map of compactifications of $G$ and homomorphism of monoids. 
\end{pro}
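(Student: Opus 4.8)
The plan is to compute $\Phi$ explicitly and then verify the three assertions — that $\Phi|_{e_X G}$ is a map of compactifications (i.e. $\Phi\circ\imath_X=\imath_Y$), that it is equivariant, and that it is a monoid homomorphism — in an order that lets the last two follow from the first together with Lemma~\ref{mapeqc}. For $y\in Y$ write $z_y$ for the unique point of $Z$ with $\varphi(z_y)=y$ (this exists since $\varphi|_Z$ is a bijection onto $Y$). Unravelling the definitions of $\varphi^X$, $\pr_Y$ and the reindexing by $\varphi|_Z$, one obtains the formula $\Phi(f)(y)=\varphi\bigl(f(z_y)\bigr)$ for every $f\in X^X$ and $y\in Y$. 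From this $\Phi$ is continuous on all of $X^X$, being the composition of the continuous product map $\varphi^X$, the projection $\pr_Y$, and the reindexing homeomorphism $Y^Z\to Y^Y$.

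First I would check $\Phi\circ\imath_X=\imath_Y$. Since $\imath_X(g)(z_y)=\theta_X(g,z_y)$ and $\varphi$ is a $G$-map with $\varphi(z_y)=y$, we get $\Phi(\imath_X(g))(y)=\varphi(\theta_X(g,z_y))=\theta_Y(g,\varphi(z_y))=\theta_Y(g,y)=\imath_Y(g)(y)$, so $\Phi(\imath_X(G))=\imath_Y(G)$. By continuity $\Phi(e_X G)=\Phi(\cl\,\imath_X(G))\subset\cl\,\imath_Y(G)=e_Y G$, and since $e_X G$ is compact and $\imath_Y(G)$ is dense in $e_Y G$, the restriction $\Phi|_{e_X G}$ is onto. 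Thus $\Phi|_{e_X G}$ is a map of the compactification $(e_X G,\imath_X)$ onto $(e_Y G,\imath_Y)$.

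The main step is the homomorphism property $\Phi(f_1\circ f_2)=\Phi(f_1)\circ\Phi(f_2)$ for $f_1,f_2\in e_X G$. Evaluating the left side at $y$ gives $\varphi\bigl(f_1(f_2(z_y))\bigr)$, while the right side gives $\Phi(f_1)(\Phi(f_2)(y))=\varphi\bigl(f_1(z_{y'})\bigr)$, where $y'=\Phi(f_2)(y)=\varphi(f_2(z_y))$ and $z_{y'}$ is the representative in $Z$ of the $\sim_\varphi$-class of $f_2(z_y)$. In particular $z_{y'}$ and $f_2(z_y)$ lie in the same class $[f_2(z_y)]$, so by Lemma~\ref{mapeqc} applied to $f_1\in e_X G$ both $f_1(z_{y'})$ and $f_1(f_2(z_y))$ belong to $f_1([f_2(z_y)])\subset[f_1(f_2(z_y))]$; hence they have the same image under $\varphi$ and the two sides agree. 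This is the step I expect to be the real obstacle: $\Phi$ is defined through a choice of representatives $Z$, and composition in $Y^Y$ generally reshuffles which representative is used, so the identity would fail for arbitrary self-maps of $X$. It is precisely Lemma~\ref{mapeqc} that forces the value $\varphi\bigl(f_1(\,\cdot\,)\bigr)$ to be independent of the representative chosen inside a class, for $f_1$ in the enveloping semigroup.

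Finally, equivariance is immediate from the previous two points: using $\theta_{\Delta X}(g,f)=\imath_X(g)\circ f$ and $\theta_{\Delta Y}(g,h)=\imath_Y(g)\circ h$ from Proposition~\ref{diagactalg}(2) (applied to $X$ and to $Y$), together with $\Phi\circ\imath_X=\imath_Y$ and the homomorphism property with $f_1=\imath_X(g)\in e_X G$, we obtain $\Phi(\theta_{\Delta X}(g,f))=\Phi(\imath_X(g))\circ\Phi(f)=\imath_Y(g)\circ\Phi(f)=\theta_{\Delta Y}(g,\Phi(f))$. Hence $\Phi|_{e_X G}$ is a $G$-map of compactifications and a homomorphism of monoids, as claimed. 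As a byproduct, the representative-independence established in the third paragraph shows that $\Phi|_{e_X G}$ does not depend on the choice of $Z$.
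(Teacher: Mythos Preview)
Your proof is correct, but it takes a different route from the paper's. Both you and the paper verify $\Phi\circ\imath_X=\imath_Y$ by the same coordinate computation, but at that point the paper simply invokes \cite[Proposition~3.26]{KozlovLeiderman} to conclude that any map of Ellis compactifications is automatically a $G$-map and a monoid homomorphism. You instead prove the homomorphism property directly on $e_X G$ by using Lemma~\ref{mapeqc} to show that $\varphi\bigl(f_1(\,\cdot\,)\bigr)$ is constant on $\sim_\varphi$-classes, and then deduce equivariance from the homomorphism identity with $f_1=\imath_X(g)$. This is a genuinely more self-contained argument: it avoids the external citation and actually puts Lemma~\ref{mapeqc} to work in the proof of the proposition itself (in the paper that lemma is stated just before but is only used later, in Lemma~3.4). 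The trade-off is that the paper's one-line appeal to a general fact about maps between Ellis compactifications is shorter and highlights that the conclusion is structural rather than dependent on the particular construction of $\Phi$; your argument, on the other hand, makes transparent why the choice of representatives $Z$ is harmless precisely for elements of the enveloping semigroup.
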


\begin{proof}
The following diagram 
$$\begin{array}{ccl}
X^X & \stackrel{\Phi}{\longrightarrow} &  Y^Y \\
\imath_X \nwarrow &   &  \nearrow \imath_Y \\
 & G & 
\end{array}$$
is commutative.  In fact, for any  $y\in Y$, $g\in G$,  
$$\imath_Y (g)(y)=\theta_Y (g, y)\ \mbox{and}$$ 
$$[(\Phi\circ\imath_X) (g)] (y)=\pr_{y}\big((\Phi\circ\imath_X) (g)\big)=\pr_{y}\big((\varphi_{Z}\circ\varphi^X\circ \imath_X)(g)\big)=$$
\centerline{($\pr_{y}: Y^Y\to\{y\}\times Y=Y$ is a projection, $\pr_{y}\circ\varphi_{Z}=\pr_{x}$, $x\in Z$, $\varphi (x)=y$)} 
$$=\pr_{x}\big((\varphi^{X}\circ \imath_X)(g)\big)=\varphi\big(\imath_X (g)(x)\big)=\varphi\big(\theta_X (g, x)\big)=$$
\centerline{($\varphi$ is a $G$-map)}
$$=\theta_Y (g, \varphi (x))=\theta_Y (g, y).$$
Since $\imath_X$ and $ \imath_Y$ are embeddings, $\Phi|_{e_X G}: e_X G\to e_Y G$ is a map of compactifications. By~\cite[Proposition 3.26]{KozlovLeiderman} it is a $G$-map and a homomorphism of monoinds.
\end{proof}

The map $\Phi$ defines a congruence $\sim_{\Phi}$ on a semigroup $e_X G$
$$f\sim_{\Phi}h\ \Longleftrightarrow\ \Phi (f)=\Phi (h).$$
By $[f]$ the equivalence class of $f\in  e_X G$ is denoted.

\begin{lem}
$f\sim_{\Phi}h\ \Longleftrightarrow\ f(x)\sim_{\varphi} h(x),\ \forall\ x\in X$.
\end{lem}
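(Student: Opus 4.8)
The plan is to first make the map $\Phi$ completely explicit on coordinates and then read off both implications. Unwinding the definition $\Phi=\varphi_Z\circ\varphi^X$, for $f\in X^X$ and $y\in Y$ one computes $\Phi(f)(y)=\varphi\big(f(z_y)\big)$, where $z_y\in Z$ is the unique representative with $\varphi(z_y)=y$ (i.e. $z_y=(\varphi|_Z)^{-1}(y)$): indeed $\pr_y\circ\varphi_Z=\pr_{z_y}$ by the definition of the coordinate identification under the bijection $\varphi|_Z$ (this is exactly the relation used in the proof of Proposition~\ref{mapelliscomp}), and $\pr_{z_y}\circ\varphi^X$ sends $f$ to $\varphi(f(z_y))$. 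Thus $\Phi(f)=\Phi(h)$ is equivalent to $\varphi(f(z))=\varphi(h(z))$ for every representative $z\in Z$.

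The implication from right to left is then immediate: if $f(x)\sim_\varphi h(x)$, that is $\varphi(f(x))=\varphi(h(x))$, for all $x\in X$, then in particular this holds for each $x=z\in Z$, whence $\Phi(f)=\Phi(h)$, i.e. $f\sim_\Phi h$. One does not even need $f,h\in e_X G$ here; this direction is valid on all of $X^X$.

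The delicate direction is from left to right, where $\Phi(f)=\Phi(h)$ a priori only yields the coordinatewise equality on the representative set $Z$, and one must upgrade it to all of $X$. This is precisely where membership in the Ellis compactification enters: since $f,h\in e_X G$, Lemma~\ref{mapeqc} gives $f([x])\subset[f(x)]$ and $h([x])\subset[h(x)]$, so the value $\varphi(f(x))$ depends only on the class $[x]$ (if $x\sim_\varphi x'$ then $f(x')\in f([x])\subset[f(x)]$, hence $\varphi(f(x'))=\varphi(f(x))$), and likewise for $h$. Consequently, for an arbitrary $x\in X$, letting $z\in Z$ be the representative of its class, one chains $\varphi(f(x))=\varphi(f(z))=\varphi(h(z))=\varphi(h(x))$, the outer equalities by class-invariance from Lemma~\ref{mapeqc} and the middle one by $\Phi(f)=\Phi(h)$. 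This gives $f(x)\sim_\varphi h(x)$ for all $x\in X$ and completes the proof. The only genuine obstacle is this passage from the representatives in $Z$ to all points of $X$, and it is removed entirely by Lemma~\ref{mapeqc}.
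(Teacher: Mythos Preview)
Your proof is correct and follows essentially the same route as the paper: both unwind $\Phi=\varphi_Z\circ\varphi^X$ to reduce $\Phi(f)=\Phi(h)$ to the coordinatewise condition $\varphi(f(z))=\varphi(h(z))$ for all $z\in Z$, and both invoke Lemma~\ref{mapeqc} to pass from representatives in $Z$ to all points of $X$. The only cosmetic difference is that the paper presents the argument as a single chain of biconditionals, whereas you separate the two implications and remark that the easy direction holds on all of $X^X$; the mathematical content is identical.
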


\begin{proof}
$$f\sim_{\Phi}h\ \Longleftrightarrow\ \Phi (f)=\Phi (h)\  \Longleftrightarrow\ ((\varphi_Z\circ \varphi^X) (f))(y)=((\varphi_Z\circ \varphi^X) (h))(y),\ \forall\ y\in Y, \Longleftrightarrow$$
$$\pr_y((\varphi_Z\circ \varphi^X) (f))=\pr_y((\varphi_Z\circ \varphi^X) (h)),\ \forall\ y\in Y, \Longleftrightarrow$$
\centerline{($\varphi (x)=y$)} 
$$\pr_x(\varphi^X (f))=\pr_x(\varphi^X (h)),\ \forall\ x\in Z,\Longleftrightarrow \varphi(f (x))=\varphi (h(x)),\ x\in Z, \Longleftrightarrow$$
$$ f(x)\sim_{\varphi} h(x),\ \forall\ x\in Z, \Longleftrightarrow$$
\centerline{(by Lemma~\ref{mapeqc})} 
$$ f(x)\sim_{\varphi} h(x),\ \forall\ x\in X.$$
\end{proof}

Let $\varphi: X\to Y$ be an (onto) elementary $G$-map ($\varphi|_{X\setminus\varphi^{-1}(y)}$, $y\in Y$, is a homeomorphism). Then $\st_y=G$ ($y$ is a fixed point in $Y$). Put $I=\{f\in e_X G\ |\ f(x)\in\varphi^{-1}(y),\ x\in X\}$, $I$ is a compact subset of $e_X G$. 

\begin{pro}\label{Reesquot}
If $I\ne\emptyset$, then $I$ is a compact ideal of $e_X G$. 

If, additionally, the condition 
$$\forall\ f, h\in e_X G\ \mbox{such that}\ f|_{X\setminus\varphi^{-1}(y)}=h|_{X\setminus\varphi^{-1}(y)}\ne\emptyset\ \Longrightarrow\ f=h\eqno{\rm (EF)}$$
is valid, then the Rees quotient $e_X G/I$ is topologically isomorphic to $e_Y G$.
\end{pro}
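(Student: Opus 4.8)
The plan is to realize the desired topological isomorphism through the homomorphism $\Phi|_{e_X G}\colon e_X G\to e_Y G$ already produced in Proposition~\ref{mapelliscomp}, and to identify $I$ with the full preimage of the zero of $e_Y G$. First I would record the elementary fact that every $f\in e_X G$ preserves the distinguished fibre $F:=\varphi^{-1}(y)$: since $y$ is fixed and $\varphi$ is a $G$-map, each $g\in G$ satisfies $g(F)=F$, and as $F$ is closed and $f$ is a pointwise limit of the maps $\imath_X(g)$, one gets $f(F)\subseteq F$. This yields part~1 almost at once. Compactness of $I$ follows because $I=\bigl(\bigcap_{x\in X}\pr_x^{-1}(F)\bigr)\cap e_X G$ is closed in the compact space $e_X G$ (here $F$ is closed and each evaluation $\pr_x$ is continuous). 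For the ideal property, if $f\in I$ and $h\in e_X G$ then $(f\circ h)(x)=f(h(x))\in F$, so $f\circ h\in I$, while $(h\circ f)(x)=h(f(x))\in h(F)\subseteq F$, so $h\circ f\in I$; thus $I$ is a two-sided compact ideal.

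For part~2 I would first note that $\Phi$ carries $I$ to a single point: for $f\in I$ one has $\varphi(f(x))=y$ for all $x$, so $\Phi(f)$ is the constant map $c_y\colon Y\to\{y\}$, and $c_y$ is the zero of $e_Y G$ (because $y$ is fixed, $g'(y)=y$ for every $g'\in e_Y G$, whence $c_y\circ g'=c_y=g'\circ c_y$). Conversely, if $\Phi(f)=c_y$ then $\varphi(f(x'))=y$ for each representative $x'\in Z$, and by Lemma~\ref{mapeqc} this propagates to $\varphi(f(x))=y$ for all $x\in X$, i.e. $f\in I$. Hence $\Phi^{-1}(c_y)\cap e_X G=I$. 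Since $\Phi|_{e_X G}$ is a homomorphism that is constant on $I$, it factors through the Rees quotient as a continuous surjective homomorphism $\psi\colon e_X G/\rho_I\to e_Y G$ (surjectivity coming from $\Phi\circ\imath_X=\imath_Y$, density of $\imath_Y(G)$, and compactness of $e_X G$); by Lemma~\ref{ideals}(a) the domain $e_X G/\rho_I$ is a right topological monoid with $0$. It remains to prove that $\psi$ is injective, equivalently that $\Phi$ is injective on $e_X G\setminus I$.

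So take $f,h\in e_X G\setminus I$ with $\Phi(f)=\Phi(h)$, i.e. $\varphi\circ f=\varphi\circ h$. The aim is to reach the hypothesis of (EF), namely $f|_{X\setminus F}=h|_{X\setminus F}$, and then conclude $f=h$. For a point $x$ with $f(x)\notin F$, the fibre through $f(x)$ is a singleton (elementariness of $\varphi$), and $\varphi(f(x))=\varphi(h(x))$ forces $f(x)=h(x)$; this already gives agreement on $\{x:f(x)\notin F\}$, a non-empty subset of $X\setminus F$ because $f\notin I$. The remaining and genuinely delicate points are the ``collapse'' points $x\in X\setminus F$ with $f(x)\in F$ (equivalently $h(x)\in F$): here $\varphi$ gives no information. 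The main obstacle of the whole argument is to show $f(x)=h(x)$ at such points, so that $f$ and $h$ agree on all of $X\setminus F$. I expect this to be exactly what (EF) is designed to control, in combination with the approximation of $f,h$ by group elements and the fibre-preservation established above; once agreement on $X\setminus F$ is in hand, (EF) upgrades it to $f=h$ on $F$ as well, hence $f=h$.

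Finally, granting injectivity off $I$, the congruence $\sim_\Phi$ coincides with the Rees congruence $\rho_I$ (the inclusion $\rho_I\subseteq\sim_\Phi$ is immediate, since $\Phi$ is constant on $I$), so $\psi\colon e_X G/\rho_I\to e_Y G$ is a continuous bijective homomorphism from a compact space onto a Hausdorff one, hence a homeomorphism; being also a homomorphism of monoids, it is a topological isomorphism. This is the required identification of the Rees quotient $e_X G/I$ with $e_Y G$.
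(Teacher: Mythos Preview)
Your approach is the same as the paper's: factor $\Phi|_{e_X G}$ through the Rees quotient and check that the induced map is a bijection. Part~1, the identification $\Phi^{-1}(c_y)=I$, surjectivity, and the final compact--Hausdorff step are all correct and match the paper.

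The paper does not address the ``collapse'' points you isolate; it writes only ``from (EF) and the definition of $\Phi$, it follows that $\Phi|_{e_X G\setminus I}$ is the identity map.'' The intended reading of (EF), signalled by the otherwise idle clause ``$\ne\emptyset$'' and by the way the paper invokes monotonicity when verifying (EF) in its applications, is stronger than the one you adopted: the hypothesis is not that $f$ and $h$ agree on all of $X\setminus F$, but that they agree as \emph{partial} self-maps of $X\setminus F$ --- that is, the sets $B_f=\{x:f(x)\notin F\}$ and $B_h=\{x:h(x)\notin F\}$ coincide, $f$ and $h$ agree on this common set $B$, and $B$ is non-empty. With this reading the proof closes at once: from $\varphi\circ f=\varphi\circ h$ one gets $B_f=B_h=:B$; on $B$ injectivity of $\varphi|_{X\setminus F}$ forces $f|_B=h|_B$; and $B\ne\emptyset$ since $f\notin I$ and $f(F)\subseteq F$. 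Then (EF) yields $f=h$ directly, with no separate treatment of the collapse points. Under your literal reading of (EF) the gap you describe is genuine and is not fillable by approximation arguments; note also that under that reading monotonicity would be irrelevant to verifying (EF) in the ordered-space applications, whereas the paper explicitly appeals to it.
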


\begin{proof}
Since $\varphi$ is a $G$-map, $\varphi^{-1}(y)$ is an invariant subset in $X$. Therefore, $f(\varphi^{-1}(y))\subset\varphi^{-1}(y)$, $\forall\ f\in e_X G$.

From the definition of $I$ it follows that $\forall\ h\in I$ and $\forall\ f\in e_X G$
$$f\circ h\ \mbox{and}\ h\circ f\in I.\ \mbox{Hence},\ I\ \mbox{is an ideal}.$$

From (EF) and definition of $\Phi: e_X G\to  e_Y G$, it follows that $\Phi|_{e_X G\setminus I}$ is the identity map and $\Phi (I)$ is a constant map $c_{y}: Y\to y$. The quotient space $e_X G/I$ of $e_X G$ ($q: e_X G\to e_X G/I$ is the quotient map, $q|_{e_X G\setminus I}$ is the identity map and $q (I)$ is a one-point set) is defined by Lemma~\ref{ideals}. The continuous map $p: e_X G/I\to e_Y G$ such that $\Phi=p\circ q$ is defined. It is a homeomorphism, being a bijection of compact spaces. $\Phi$ and $q$ are homomorphisms of monoids; therefore, $p$ is a topological isomorphism.
\end{proof}

\medskip

Let a topological group $G$ be $\tau_p$-representable in $X$, $\mathbb{GC}(X)$ is a poset of $G$-compactifications of $X$. By Lemma~\ref{l1} $G$ is $\tau_p$-representable in $b X$ for any $b X\in \mathbb{GC}(X)$. Thus, the map  
$$\mathfrak{E}_{\theta}:\mathbb{GC}(X)\to \mathbb{E} (G),\ \mathfrak{E}_{\theta}(b X)=e_{b X} G,$$ is defined. 

\begin{rem}\label{compmapsth}
{\rm A topological group $G$ is $\tau_p$-representable in itself (see, \S~\ref{admtop}). Therefore,  the map  
$\mathfrak{E}:\mathbb{GC}(G)\to \mathbb{E} (G)$, $\mathfrak{E} (b G)=e_{b G} G$, is defined, where $\mathbb{CG}(G)$ is the poset of $G$-compactifications of $G$. If $b G\in\mathbb{E}(G)$, then $\mathfrak{E}(b G)=b G$. For other properties of the map $\mathfrak{E}$ see~\cite[Theorem 4.3]{KozlovLeiderman} or~\cite[Theorem 2.22]{KozlovSorin}.

In particular, if $G$ is $\tau_p$-representable in $X$, then $\mathfrak{E}\circ \mathfrak{E}_{\theta}=\mathfrak{E}_{\theta}$, since $\mathfrak{E}_{\theta}( bX)\in\mathbb{E} (G)$, $\forall\ b X\in\mathbb {GC}(X)$.}
\end{rem}

\begin{cor}\label{order} 
Let a topological group $G$ be $\tau_p$-representable in $X$. Then the map $\mathfrak{E}_{\theta}:\mathbb{GC}(X)\to \mathbb{E} (G)$ is a morphism of posets.
\end{cor}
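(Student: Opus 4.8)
The plan is to reduce the statement directly to Proposition~\ref{mapelliscomp}. A morphism of posets is simply an order-preserving map, so it suffices to show that $b' X \geq b X$ in $\mathbb{CG}(X)$ forces $\mathfrak{E}_{\theta}(b' X) = e_{b' X} G \geq e_{b X} G = \mathfrak{E}_{\theta}(b X)$ in $\mathbb{E}(G)$.

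First I would unwind the order on $\mathbb{CG}(X)$: the relation $b' X \geq b X$ means there is a $G$-map $\varphi: b' X \to b X$ commuting with the dense embeddings. Because $b' X$ and $b X$ are compact Hausdorff, $\varphi$ is automatically perfect, and it is onto since its image is a compact, hence closed, set containing a dense subset of $b X$. By Lemma~\ref{l1}, $G$ is $\tau_p$-representable in both $b' X$ and $b X$. Thus every hypothesis of Proposition~\ref{mapelliscomp} is met, with the roles of $X$ and $Y$ played by $b' X$ and $b X$ respectively.

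Next I would invoke Proposition~\ref{mapelliscomp} to produce the map $\Phi|_{e_{b' X} G}: e_{b' X} G \to e_{b X} G$, a $G$-map of compactifications of $G$ (and a homomorphism of monoids). The crucial observation is that being a ``map of compactifications of $G$'' encodes exactly the commutativity $\Phi \circ \imath_{b' X} = \imath_{b X}$ established in the triangle diagram of that proof, and this commutativity is precisely the witness demanded by the definition of the order on compactifications of $G$. Hence $e_{b' X} G \geq e_{b X} G$, which is what we want, and $\mathfrak{E}_{\theta}$ preserves order.

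The main difficulty here is essentially bookkeeping rather than mathematics: all the substantive work has already been carried out in Proposition~\ref{mapelliscomp}. One need only confirm that its hypotheses apply in the present situation --- $\tau_p$-representability in the $G$-compactifications (from Lemma~\ref{l1}) together with perfectness and surjectivity of $\varphi$ (both automatic between compact Hausdorff spaces). Once these routine checks are dispatched, order preservation is immediate.
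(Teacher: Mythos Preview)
Your proposal is correct and follows essentially the same route as the paper's proof: both reduce the order-preservation to Proposition~\ref{mapelliscomp}, using the $G$-map $\varphi: b'X \to bX$ witnessing $b'X \geq bX$ to produce the map $\Phi$ of Ellis compactifications. Your version is simply more explicit about verifying the hypotheses (surjectivity and perfectness of $\varphi$, and $\tau_p$-representability in each $G$-compactification via Lemma~\ref{l1}, which the paper had already recorded just before stating the corollary).
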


\begin{proof}
If $b X\leq b' X$, then the map of compactifications $\varphi: b'G \to b G$ induces, by Proposition~\ref{mapelliscomp}, the map of Ellis compactifications $\Phi: e_{b' X} G\to e_{b X} G$. Hence, $e_{b' X} G\geq e_{b X} G$. 
\end{proof}

\begin{que} 
Let a topological group $G$ be $\tau_p$-representable in $X$. When the map $\mathfrak{E}_{\theta}$ is injective {\rm(}surjective, bijective{\rm)}?
\end{que}


\subsection{Comparison of uniformities on topological groups}

Let a topological group $G$ be $\tau_p$-representable in the compact space $X$. For the unique uniformity $\mathcal U_X$ on $X$, $\mathcal U_X^p$ is the Cartesian product of uniformities on $X^X$. Then he subspace uniformity  $\mathcal V_G (X)$ on $\imath_X (G)$ as a subset of $(X^X, \mathcal U_X^p)$ is initial with respect to the restrictions to $\imath_X (G)$ of the projections $\pr_{\sigma}: X^X\to (X^{\sigma}, \mathcal U_X^{\sigma})$  (maps to uniform spaces, where products $X^{\sigma}$ are with the Cartesian product of uniformities $\mathcal U_X$), $\sigma\in\Sigma_X$.

\begin{rem}
{\rm Identifying $G$ and $\imath_X (G)$, the uniformity  $\mathcal V_G (X)$ on $G$ is an equiuniformity. 

Since $(\pr_x\circ\imath_X)(g)=\theta (g, x)=\theta_x(g)$, $g\in G$, the equiuniformity $\mathcal V_G (X)$ is the least uniformity $\mathcal U$ on $G$ such that the orbit maps $\theta_x: (G, \mathcal U)\to (X, \mathcal U_X)$, $x\in X$, are uniformly continuous.

The equiuniformity $\mathcal V_G (X)$ on $\imath_X (G)$ is the subspace uniformity on $G$ as a subset of $e_X G$.}
\end{rem}

Let $q_{\sigma}: G\to G/\st_{\sigma}$ be the quotient map, $\sigma\in\Sigma_X$. The following diagram is commutative for any $\sigma\in\Sigma_X$
$$\begin{array}{ccc}
\quad G & \stackrel{\imath_X}\hookrightarrow & X^X  \\
 q_{\sigma}  \downarrow &   & \quad \downarrow \pr_{\sigma}\\
\quad G/\st_{\sigma} & \stackrel{\psi_{\sigma} }\longrightarrow & X^{\sigma}, 
\end{array}\eqno{\rm (d)}$$
where $\psi_{\sigma}  (g \st_{\sigma})(x)=\theta (g, x)$, $x\in\sigma$ ($\psi_{\sigma}$ is correctly defined since if $g'=gh$, $h\in\st_{\sigma}$, then $\theta (g', x)=\theta (gh, x)=\theta (g, \theta (h, x))=\theta (g, x)$). $\psi_{\sigma}$ is a bijection, since $g\st_{\sigma}=h\st_{\sigma}$ iff $g(x)=h(x)$, $x\in\sigma$. Considering all spaces in the diagram (d) as $G$-spaces (with the action $\alpha$ of $G$ on itself, by multiplication on the left in $G/\st_{\sigma}$ and the diagonal actions induced by $\theta$ on products) the maps in the diagram (d) are $G$-maps. 

\begin{pro}\label{lemincl}
Let a topological group $G$ be $\tau_p$-representable in compact $X$, $\mathcal U_X$ is uniformity on $X$. Then for any $\sigma\in\Sigma_X$ the map $\psi_{\sigma}: (G/\st_{\sigma}, \mathcal U_{G/\st_{\sigma}})\to ((\pr_{\sigma}\circ\imath_X) (G), \mathcal U_X^{\sigma}|_{\pr_{\sigma}(\imath_X (G))})$ is uniformly continuous. Hence, $\mathcal V_G (X)\subset R_{\Sigma_X}$.
\end{pro}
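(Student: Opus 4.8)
The plan is to establish the uniform continuity of each $\psi_{\sigma}$ directly from the explicit bases of the two uniformities, and then derive the inclusion $\mathcal V_G\subset R_{\Sigma_X}$ as a formal consequence of the ``spectral'' (initial) descriptions of both uniformities together with the commutative diagram~(d). Recall that a base of $\mathcal U_{G/\st_{\sigma}}$ is given by the covers $\{O(g\st_{\sigma})\ |\ g\st_{\sigma}\in G/\st_{\sigma}\}$, $O\in N_G(e)$, while a base of the product uniformity $\mathcal U_X^{\sigma}$ on the finite power $X^{\sigma}$ is obtained, for an entourage ${\rm U}\in\mathcal U_X$, from the boxes $\prod_{x\in\sigma}B(\cdot,{\rm U})$. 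Since uniform continuity into the subspace $(\pr_{\sigma}(\imath_X(G)),\mathcal U_X^{\sigma}|_{\ldots})$ is equivalent to uniform continuity into $(X^{\sigma},\mathcal U_X^{\sigma})$, it suffices to show: for every ${\rm U}\in\mathcal U_X$ there is $O\in N_G(e)$ such that each set $\psi_{\sigma}(O(g\st_{\sigma}))$ is ${\rm U}$-small in every coordinate $x\in\sigma$.

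The hard part, and the only place compactness of $X$ enters, is the following uniform equicontinuity of the action: for every ${\rm U}\in\mathcal U_X$ there exists $O\in N_G(e)$ with $(y,\theta(o,y))\in {\rm U}$ for all $y\in X$ and all $o\in O$. I would prove this exactly as one establishes equicontinuity of a continuous action on a compact space: choose a symmetric ${\rm U}'$ with $2{\rm U}'\subset {\rm U}$; by continuity of $\theta$ at each pair $(e,y)$ pick $O_y\in N_G(e)$ and an open $V_y\ni y$ with $V_y\subset B(y,{\rm U}')$ and $\theta(O_y,V_y)\subset B(y,{\rm U}')$; cover the compact space $X$ by finitely many $V_{y_1},\dots,V_{y_n}$ and set $O=\bigcap_{i=1}^n O_{y_i}\in N_G(e)$. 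Then for arbitrary $y\in X$, choosing $i$ with $y\in V_{y_i}$, both $y$ and $\theta(o,y)$ lie in $B(y_i,{\rm U}')$ for every $o\in O$, whence $(y,\theta(o,y))\in 2{\rm U}'\subset {\rm U}$.

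Granting this, uniform continuity of $\psi_{\sigma}$ is immediate. Since $\psi_{\sigma}(og\st_{\sigma})=(\theta(o,\theta(g,x)))_{x\in\sigma}$, the neighbourhood $O$ produced for ${\rm U}$ gives $\theta(o,\theta(g,x))\in B(\theta(g,x),{\rm U})$ for all $o\in O$ and all $x\in\sigma$, so $\psi_{\sigma}(O(g\st_{\sigma}))\subset\prod_{x\in\sigma}B(\theta(g,x),{\rm U})$ is ${\rm U}$-small in each coordinate. Hence the cover $\{O(g\st_{\sigma})\}$ has its $\psi_{\sigma}$-images contained in members of the corresponding ${\rm U}$-cover of $X^{\sigma}$, which is precisely uniform continuity of $\psi_{\sigma}\colon(G/\st_{\sigma},\mathcal U_{G/\st_{\sigma}})\to(X^{\sigma},\mathcal U_X^{\sigma})$.

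Finally I would deduce $\mathcal V_G\subset R_{\Sigma_X}$, i.e. that $\id\colon(G,R_{\Sigma_X})\to(G,\mathcal V_G)$ is uniformly continuous. Because $\mathcal V_G$ is initial with respect to the maps $\pr_{\sigma}\circ\imath_X\colon G\to(X^{\sigma},\mathcal U_X^{\sigma})$, this identity is uniformly continuous iff every $\pr_{\sigma}\circ\imath_X$ is uniformly continuous as a map from $(G,R_{\Sigma_X})$. By diagram~(d) one has $\pr_{\sigma}\circ\imath_X=\psi_{\sigma}\circ q_{\sigma}$; here $q_{\sigma}\colon(G,R_{\Sigma_X})\to(G/\st_{\sigma},\mathcal U_{G/\st_{\sigma}})$ is uniformly continuous since $R_{\Sigma_X}$ is initial with respect to the family $\{q_{\sigma}\}$, and $\psi_{\sigma}$ is uniformly continuous by the previous step. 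Their composition is therefore uniformly continuous for every $\sigma\in\Sigma_X$, which yields $\mathcal V_G\subset R_{\Sigma_X}$ and completes the argument.
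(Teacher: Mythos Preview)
Your argument is correct and follows essentially the same route as the paper. The paper's proof is more compressed: it observes that $\mathcal U_X^p$ is an equiuniformity on $X^X$ (which is exactly your compact-equicontinuity lemma, repackaged), so $\pr_{\sigma}\circ\imath_X\colon (G,R)\to(X^{\sigma},\mathcal U_X^{\sigma})$ is uniformly continuous, and then factors this through the quotient uniformity $\mathcal U_{G/\st_{\sigma}}$ of $R$ to get uniform continuity of the bijection $\psi_{\sigma}$; your explicit ball computation unwinds precisely this. The deduction of $\mathcal V_G\subset R_{\Sigma_X}$ from the two initial-uniformity descriptions and diagram~(d) is identical to the paper's.
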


\begin{proof}  $\mathcal U_X^p$ is an equiuniformity  on $X^X$ and projection $\pr_{\sigma}$ is uniformly continuous, $\sigma\in\Sigma_X$. Therefore, the map $\pr_{\sigma}\circ\imath_X: (G, R)\to (X^{\sigma}, \mathcal U_X^{\sigma})$ is uniformly continuous. The maximal equiuniformity $\mathcal U_{G/\st_{\sigma}}$ on $G/\st_{\sigma}$ is the quotient of the right uniformity $R$ on $G$. Hence, the map $\psi_{\sigma}$, being a bijection, is uniformly continuous.

Since $R_{\Sigma_X}$ is initial with respect to the quotient maps $q_{\sigma}: G\to (G/\st_{\sigma}, \mathcal U_{G/\st_{\sigma}})$,  see \S~\ref{unifgr}, and $\mathcal V_G(X)$ is initial with respect to the restrictions to $\imath_X (G)$ of the projections $\pr_{\sigma}: X^X\to (X^{\sigma}, \mathcal U_X^{\sigma})$ one has $\mathcal V_G (X)\subset R_{\Sigma_X}$. 
\end{proof}

\begin{cor}\label{coinunif1} Let a topological group $G$ be $\tau_p$-representable in compact $X$, $\mathcal U_X$ is uniformity on $X$. If the uniformities on $G$ which are initial with respect to the quotient maps $q_{\sigma}: G\to (G/\st_{\sigma}, \mathcal U_{G/\st_{\sigma}})$, $\sigma\in\Sigma_X$, and the maps $\pr_{\sigma}|_{\imath (G)}\circ\imath_X: G\to (X^{\sigma}, \mathcal U_X^{\sigma})$,  $\sigma\in\Sigma_X$, respectively, coincide 
$${\rm(}\mbox{in particular},\ \psi_{\sigma}\ \mbox{is a uniform isomorphism},\  \sigma\in\Sigma_X,{\rm)}\eqno{\rm (op)}$$
then $\mathcal V_G (X)=R_{\Sigma_X}$.
\end{cor}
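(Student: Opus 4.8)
The plan is to deduce the equality from Proposition~\ref{lemincl} together with the hypothesis, isolating the genuine content in the sufficient condition (op). By Proposition~\ref{lemincl} one inclusion, $\mathcal V_G\subset R_{\Sigma_X}$, is already available, so it remains to produce the reverse inclusion $R_{\Sigma_X}\subset\mathcal V_G$. Both uniformities are presented as initial uniformities indexed by $\sigma\in\Sigma_X$: by the description in \S~\ref{unifgr}, $R_{\Sigma_X}$ is initial with respect to the quotient maps $q_\sigma\colon G\to(G/\st_\sigma,\mathcal U_{G/\st_\sigma})$, while by the Remark preceding the diagram (d), $\mathcal V_G$ (transported to $G$ via $\imath_X$) is initial with respect to the maps $\pr_\sigma|_{\imath(G)}\circ\imath_X\colon G\to(X^\sigma,\mathcal U_X^\sigma)$. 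Since these two descriptions are exactly the two families named in the hypothesis, the assumed coincidence of the corresponding initial uniformities is precisely the asserted equality $\mathcal V_G=R_{\Sigma_X}$; hence the first assertion requires no further argument, and the substance lies in the parenthetical condition (op).

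First I would therefore concentrate on the sufficiency of (op), i.e. that each $\psi_\sigma$ being a uniform isomorphism forces the two initial uniformities to agree. The key identity is the commutativity of diagram (d), which reads $\pr_\sigma|_{\imath(G)}\circ\imath_X=\psi_\sigma\circ q_\sigma$ for every $\sigma\in\Sigma_X$. I would invoke the transitivity of initial uniformities: if in a composition $f=h\circ g$ the outer map $h$ is a uniform isomorphism onto its image, then the uniformity induced on the domain by pulling back the codomain uniformity along $f$ coincides with the one induced along $g$. Applying this with $g=q_\sigma$ and $h=\psi_\sigma$, the preimage of $\mathcal U_X^\sigma$ under $\pr_\sigma|_{\imath(G)}\circ\imath_X$ equals the preimage of $\mathcal U_{G/\st_\sigma}$ under $q_\sigma$. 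Taking the supremum over all $\sigma\in\Sigma_X$ then shows that the two initial families define one and the same uniformity, i.e. $\mathcal V_G=R_{\Sigma_X}$.

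The step I expect to be the main obstacle is the bookkeeping around the codomains: the defining family for $\mathcal V_G$ maps into the full product $(X^\sigma,\mathcal U_X^\sigma)$, whereas $\psi_\sigma$ is a bijection only onto the image $(\pr_\sigma\circ\imath_X)(G)$. One must check that pulling back $\mathcal U_X^\sigma$ along $\pr_\sigma|_{\imath(G)}\circ\imath_X$ agrees with pulling back the subspace uniformity $\mathcal U_X^\sigma|_{(\pr_\sigma\circ\imath_X)(G)}$ along the corestricted map, so that the uniform-isomorphism hypothesis on $\psi_\sigma$ is the correct ingredient to apply. Once this identification is in place, the transitivity argument closes the reverse inclusion, and combined with Proposition~\ref{lemincl} the equality $\mathcal V_G=R_{\Sigma_X}$ follows.
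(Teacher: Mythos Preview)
Your proposal is correct and matches the paper's intent: the corollary is stated without proof, being an immediate consequence of the definitions together with Proposition~\ref{lemincl} and the commutative diagram (d). Your unpacking of the parenthetical condition (op) via the factorization $\pr_\sigma|_{\imath(G)}\circ\imath_X=\psi_\sigma\circ q_\sigma$ and transitivity of initial uniformities is exactly the argument the paper leaves implicit.
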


\begin{rem}{\rm 
Since the quotient maps  $q_{\sigma}: G\to G/\st_{\sigma}$ are open, if the condition (op) is valid, then the restriction of projection $\pr_{\sigma}|_{\imath_X (G)}$ to $\imath_X (G)$ is an open map, $\sigma\in\Sigma_X$.}
\end{rem}

\medskip

If a topological group $(G, \tau)$ is $\tau_p$-representable in the compact space $X$, $\mathcal U_X$ is the uniformity on $X$, then $\tau=\tau_{co}$ (for the action $\theta: G\times X\to X$). From~\cite[Ch.\,2, Ex.\,2]{RD} it follows that the base of the right uniformity $R$ on $G$ is formed by entourages
$$\{(g, h)\in G\times G\ |\ (\theta (g, x), \theta (h, x))\in{\rm U},\ x\in X\},\ {\rm U}\in\mathcal U_X.$$
Therefore, the base of $N_G(e)$ is formed by internals of the sets 
$$O_{{\rm U}}=\{g\in G\ |\ (x, \theta (g, x))\in  {\rm U},\ x\in X\},\ {\rm U}\in\mathcal U_X.$$
$O^{-1}=O$, since $(x, \theta (g^{-1}, x))\in {\rm U}$, $x\in X$, iff  $(y, \theta (g, y))\in {\rm U}$, $y{\rm(}=\theta (g^{-1}, x){\rm)}\in X$.

\begin{pro}\label{a2-2}
Let a topological group $G$ be $\tau_p$-representable in compact $X$, $\mathcal U_X$ is the uniformity on $X$.

If for any  $\sigma\in\Sigma_{X}$ and for any entourage ${\rm U}\in {\mathcal U}_X$ there is an entourage ${\rm V}={\rm V}(\sigma; {\rm U})\in {\mathcal U}_X$ such that the condition is met:
$$\begin{array}{c}
\mbox{\rm if for}\ g, h\in G\ (\theta(g, x), \theta(h, x))\in {\rm V}\ \mbox{\rm holds for}\ x\in\sigma, \\
\mbox{\rm then there exists}\ \ g'\in g\st_{\sigma}\ \ \mbox{\rm such that}\ \ h\in O_{{\rm U}}g',\\ 
\end{array} \eqno{(\star\star)}$$
where $O_{{\rm U}}=\{f\in G\ |\ (x, \theta(f, x))\in{\rm U},\ x\in X\}$.
Then $\mathcal V_G (X)=R_{\Sigma_{X}}$.
\end{pro}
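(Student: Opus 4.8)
The plan is to deduce the equality from Corollary~\ref{coinunif1} rather than to compare entourages of $\mathcal V_G$ and $R_{\Sigma_X}$ by hand. Recall that $R_{\Sigma_X}$ is the initial uniformity with respect to the quotient maps $q_{\sigma}$ and that $\mathcal V_G$ is the initial uniformity with respect to the maps $\pr_{\sigma}\circ\imath_X$, while the commutative diagram (d) gives $\pr_{\sigma}\circ\imath_X=\psi_{\sigma}\circ q_{\sigma}$. Hence it suffices to verify condition (op), namely that each $\psi_{\sigma}$ is a uniform isomorphism. By Proposition~\ref{lemincl} every $\psi_{\sigma}$ is a uniformly continuous bijection onto its image, so the only thing to prove is the uniform continuity of $\psi_{\sigma}^{-1}$, and this is exactly what hypothesis $(\star\star)$ is designed to deliver.

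Before the main step I would record a purely topological fact that makes $(\star\star)$ usable: the sets $O_{{\rm U}}$, ${\rm U}\in\mathcal U_X$, are cofinal in $N_G(e)$, i.e. for every $O\in N_G(e)$ there is ${\rm U}\in\mathcal U_X$ with $O_{{\rm U}}\subset O$. Indeed $\tau=\tau_{co}$, and a basic neighbourhood of $e$ has the form $\{g\in G\mid g(K)\subset V\}$ with $K$ compact, $K\subset V$ open; compactness of $X$ yields an entourage ${\rm U}$ with $B(x,{\rm U})\subset V$ for all $x\in K$, whence $O_{{\rm U}}\subset\{g\mid g(K)\subset V\}$, and a finite intersection handles a general basic neighbourhood. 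I would also note the trivial fact $e\in O_{{\rm U}}$, since $(x,\theta(e,x))=(x,x)$ lies on the diagonal. Consequently the covers $\{O_{{\rm U}}\cdot(k\st_{\sigma})\mid k\in G\}$, ${\rm U}\in\mathcal U_X$, still form a base of $\mathcal U_{G/\st_{\sigma}}$, so it is enough to treat target entourages arising from such $O_{{\rm U}}$.

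The heart of the proof is then the translation of $(\star\star)$ into uniform continuity of $\psi_{\sigma}^{-1}$. Fix $\sigma$ and a basic entourage of $\mathcal U_{G/\st_{\sigma}}$; by the previous paragraph I may take it to be $D_{O_{{\rm U}},\sigma}=\bigcup_{k\in G}(O_{{\rm U}}\cdot(k\st_{\sigma}))\times(O_{{\rm U}}\cdot(k\st_{\sigma}))$ for some ${\rm U}\in\mathcal U_X$, and I choose ${\rm V}={\rm V}(\sigma;{\rm U})$ from $(\star\star)$. For $g,h\in G$, the requirement that $\psi_{\sigma}(g\st_{\sigma})$ and $\psi_{\sigma}(h\st_{\sigma})$ be coordinatewise ${\rm V}$-close in $X^{\sigma}$ reads literally $(\theta(g,x),\theta(h,x))\in{\rm V}$ for all $x\in\sigma$, which is the premise of $(\star\star)$; its conclusion furnishes $g'\in g\st_{\sigma}$ with $h\in O_{{\rm U}}g'$. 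Writing $h=og'$ with $o\in O_{{\rm U}}$ gives $h\st_{\sigma}=o\cdot(g'\st_{\sigma})\in O_{{\rm U}}\cdot(g'\st_{\sigma})$, and since $g'\st_{\sigma}=g\st_{\sigma}$ and $e\in O_{{\rm U}}$ we also have $g\st_{\sigma}\in O_{{\rm U}}\cdot(g\st_{\sigma})$; thus both cosets lie in the single cover member $O_{{\rm U}}\cdot(g\st_{\sigma})$, so $(g\st_{\sigma},h\st_{\sigma})\in D_{O_{{\rm U}},\sigma}$. This proves $\psi_{\sigma}^{-1}$ uniformly continuous, establishes (op), and Corollary~\ref{coinunif1} yields $\mathcal V_G=R_{\Sigma_X}$.

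I expect the main obstacle to be precisely the cofinality step $O_{{\rm U}}\subset O$. The preliminaries assert only that the \emph{interiors} of the sets $O_{{\rm U}}$ form a base of $N_G(e)$, whereas $(\star\star)$ delivers membership in the full set $O_{{\rm U}}$; without the compactness argument above, the conclusion of $(\star\star)$ would land in a possibly \emph{larger} entourage $D_{O_{{\rm U}},\sigma}$ than the prescribed target, and the required inclusion would fail. Everything else is routine bookkeeping: matching the premise of $(\star\star)$ with coordinatewise ${\rm V}$-closeness in $X^{\sigma}$ and its conclusion with co-membership in one element of the canonical cover of $G/\st_{\sigma}$, together with the elementary observations $e\in O_{{\rm U}}$ and $g'\st_{\sigma}=g\st_{\sigma}$.
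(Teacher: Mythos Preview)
Your proof is correct and follows essentially the same route as the paper: invoke Corollary~\ref{coinunif1}, use Proposition~\ref{lemincl} to reduce to the uniform continuity of $\psi_{\sigma}^{-1}$, and then read $(\star\star)$ as exactly the statement that the $\mathcal U_X^{\sigma}$-cover indexed by ${\rm V}$ refines the $\mathcal U_{G/\st_{\sigma}}$-cover $\{O_{{\rm U}}g\st_{\sigma}\mid g\in G\}$. The only difference is that you spell out the cofinality of $\{O_{{\rm U}}\}$ in $N_G(e)$, which the paper takes as implicit from the description of the right uniformity given just before the proposition (the $O_{{\rm U}}$ are precisely the $R$-balls at $e$, hence already a neighbourhood base).
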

 
\begin{proof}
By Corollary~\ref{coinunif1} it is sufficient to show that for any $\sigma\in\Sigma_{X}$  the map $\psi_{\sigma}$ is a uniform isomorphism.
Since $\mathcal V_G (X)\subset R_{\Sigma_X}$ by Proposition~\ref{lemincl}, only the uniform continuity of $\psi_{\sigma}^{-1}$ must be checked. One must show that in the cover $$\{O_{{\rm U}}g\st_{\sigma}\ |\ g\in G\},\ {\rm U}\in {\mathcal U}_X,$$
the cover 
$$\{U_{\sigma; g; {\rm V}}=\{h\in G\ |\ (\theta(g, x), \theta(h, x))\in {\rm V},\ x\in\sigma\}\ |\ g\in G\},$$
is refined for some ${\rm V}\in{\mathcal U}_X$. 
 
By the condition $(\star\star)$ for $\sigma\in\Sigma_{X}$ and ${\rm U}\in {\mathcal U}_X$ there exists ${\rm V}\in {\mathcal U}_X$ such that if for $g, h\in G$ $(\theta(g, x), \theta(h, x))\in {\rm V}$, $x\in\sigma$, then there exists $g'\in g\st_{\sigma}$ such that $h\in O_{{\rm U}}g'$. Then for any fixed $g\in G$ and any $h\in U_{\sigma; g; {\rm V}}$ 
$$h\in O_{{\rm U}} g\st_{\sigma}.$$
Therefore, $U_{\sigma; g; {\rm V}}\subset   O_{{\rm U}}g\st_{\sigma}$, and the cover   
$\{U_{\sigma; g; {\rm V}}\ |\ g\in G\}$ is refined in the cover $\{O_{{\rm U}}g\st_{\sigma}\ |\ g\in G\}$. Hence, $\psi_{\sigma}$ is a uniform isomorphism.
\end{proof}

\begin{rem}\label{Rlkcomp}
{\rm 
If $\mathcal V_G (X)=R_{\Sigma_X}$ ($X$ is compact), in particular, the condition $(\star\star)$ is valid, then $G$ is Roelcke precompact by Remark~\ref{a2-1} (see~\cite[Corollary 2.2]{Sorin1} or~\cite[Theorem 1]{Sorin}) and $b_r G\leq e_X G$ (since $L\wedge R\subset R_{\Sigma_X}$).

\medskip

The condition $h\in O_{{\rm U}}g'$ in Proposition~\ref{a2-2} is equivalent to the condition $(\theta(h, x), \theta(g', x))\in {\rm U}$, $x\in X$. Indeed, $(\theta(h, x), \theta(g', x))\in {\rm U},\ x\in X\Longleftrightarrow (\theta (h{g'}^{-1}, x), x)\in {\rm U},\ x\in X \Longleftrightarrow h{g'}^{-1}\in O_{{\rm U}} \Longleftrightarrow  h\in O_{{\rm U}}g'$.

\medskip

Proposition~\ref{a2-2} is a part of~\cite[Theorem 2.1]{Sorin1} or \cite[Theorem 1]{Sorin} (see, also~\cite[Proposition 2.16]{KozlovSorin}.}
\end{rem}

\begin{rem}{\rm If a topological group $G$ is $\tau_p$-representable in compact $X$ and the action  $G\curvearrowright X$ is (uniformly)  equicontinuous, then the Ellis compactification $e_X G$ is a compact topological group~\cite[Theorem 4.2]{Vries} (see, also~\cite[Ch.10,\ \S\ 3]{Burb} and \cite[Theorem 3.33]{Kozlov}).}
\end{rem}

A (closed) subgroup $H$ of a topological group $G$ is called {\it neutral} if $\forall\ O\in N_{G}(e)$ $\exists\ V\in N_{G}(e)$ such that $VH\subset HO$~\cite[Definition 5.29]{RD}. 

Let $H$ be a neutral subgroup of a topological group $(G, \tau)$ without nontrivial invariant subgroups. Then $G$ is a uniformly equicontinuous group of homeomorphisms of $((G, \tau)/H, \mathcal L)$, where $\mathcal L$ is the quotient uniformity on coset space $(G, \tau)/H$ of the left uniformity $L$ on $G$ (the final uniformity on $(G, \tau)/H$ with respect to the quotient map $(G, L)\to G/H$~\cite[Proposition 5.40]{RD}). $\tau_p$ is an admissible group topology on $G$ for action $G\curvearrowright (G, \tau)/H$, $\tau_p\leq\tau$. By~\cite[Proposition 3.2]{Kozlov} $H$ is a neutral subgroup of $(G, \tau_p)$ and 
$((G, \tau_p), (G, \tau)/H, \theta)$ is a $G$-Tychonoff space (with an open action by multiplication on the left). Hence, $(G, \tau_p)$ is $\tau_p$-representable in the coset space $(G, \tau)/H=(G, \tau_p)/H$.

\begin{pro}\label{autultr} Let $H$ be a neutral subgroup without nontrivial invariant subgroups of a topological group $G$, $G$ is $\tau_p$-representable in the coset space $X=G/H$ and the maximal equiuniformity  $\mathcal U^{max}_{X}$ on $X$ is totally bounded. Then 
\begin{itemize}
\item[(1)] ${\mathfrak E} (b_r G)\geq {\mathfrak E}_{\theta} (\beta_G X)$, 
\item[(2)] if ${\mathfrak E}_{\theta} (\beta_G X)\geq b_r G$, then  ${\mathfrak E}_{\theta} (\beta_G X)={\mathfrak E} (b_r G)$.
\end{itemize}
\end{pro}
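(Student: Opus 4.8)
The plan is to exploit the universal property packaged in Remark~\ref{compmapsth}, namely that $\mathfrak{E}\circ\mathfrak{E}_\theta=\mathfrak{E}_\theta$, together with the order-preserving nature of the maps $\mathfrak{E}$ and $\mathfrak{E}_\theta$ (Corollary~\ref{order}), and to combine these with the uniformity comparisons of \S 3.3. The key structural fact I would establish first is that under the hypotheses—$H$ neutral without nontrivial invariant subgroups, $X=G/H$ with totally bounded maximal equiuniformity $\mathcal U_X$—the subspace uniformity $\mathcal V_G$ on $\imath_X(G)\subset (X^X,\mathcal U_X^p)$ coincides with the Roelcke-type uniformity, or at least sits in the precise position relative to $L\wedge R$ needed to pin down how $\mathfrak{E}_\theta(bX)=e_{bX}G$ relates to $b_rG$. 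Since $H$ is neutral and $G$ acts by multiplication on the coset space $G/H$, the action $G\curvearrowright(G/H,\mathcal L)$ is uniformly equicontinuous (as recorded just before the statement), and the covers of $\mathcal U_{G/H}$ have the open form $\{Ox\mid x\in G/H\}$, $O\in N_G(e)$, described in \S\ref{admtop}. This lets me identify $R_{\Sigma_X}$ explicitly and apply Proposition~\ref{lemincl} to get $\mathcal V_G\subset R_{\Sigma_X}$, with $L\wedge R\subset R_{\Sigma_X}\subset R$ from Remark~\ref{a2-1}.

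For part~(1), ${\mathfrak E}(b_r G)\geq{\mathfrak E}_\theta(bX)$, I would argue as follows. Because $L\wedge R\subset R_{\Sigma_X}$ and $R_{\Sigma_X}$ is the uniformity governing $e_X G$ through the spectral description via the quotients $q_\sigma\colon G\to G/\st_\sigma$, the Ellis compactification $e_{bX}G$ corresponds to a totally bounded uniformity coarser than (or equal to) the Roelcke uniformity $L\wedge R$; equivalently, the identity map on $G$ extends to a map of compactifications $b_rG\to \mathfrak{E}_\theta(bX)$, so $b_rG\geq\mathfrak{E}_\theta(bX)$ in the order of $G$-compactifications. Now apply $\mathfrak{E}$: since $\mathfrak{E}$ is a morphism of posets by the cited Theorem~4.3 of~\cite{KozlovLeiderman}, $\mathfrak{E}(b_rG)\geq\mathfrak{E}(\mathfrak{E}_\theta(bX))=\mathfrak{E}_\theta(bX)$, the last equality being exactly the idempotency $\mathfrak{E}\circ\mathfrak{E}_\theta=\mathfrak{E}_\theta$ from Remark~\ref{compmapsth}. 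This chain yields~(1) cleanly, provided the initial comparison $b_rG\geq\mathfrak{E}_\theta(bX)$ is correctly set up; the subtlety is making sure the totally bounded replica of $R_{\Sigma_X}$ restricted to the representation in $bX$ is genuinely no finer than $b_rG$.

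For part~(2), assume in addition $\mathfrak{E}_\theta(bX)\geq b_rG$. Combined with $\mathfrak{E}(b_rG)\geq\mathfrak{E}_\theta(bX)$ from~(1), I want to squeeze $\mathfrak{E}_\theta(bX)=\mathfrak{E}(b_rG)$. Applying $\mathfrak{E}$ to the hypothesis $\mathfrak{E}_\theta(bX)\geq b_rG$ and using order-preservation plus idempotency gives $\mathfrak{E}_\theta(bX)=\mathfrak{E}(\mathfrak{E}_\theta(bX))\geq\mathfrak{E}(b_rG)$. Together with the reverse inequality $\mathfrak{E}(b_rG)\geq\mathfrak{E}_\theta(bX)$ from~(1), antisymmetry of the poset order forces equality. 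The definitional content here is precisely that $\mathfrak{E}(b_rG)$ is the \emph{least} Ellis compactification dominating $b_rG$ (the property stated in the Introduction), so once $\mathfrak{E}_\theta(bX)$ is known to be an Ellis compactification lying above $b_rG$, minimality gives $\mathfrak{E}(b_rG)\leq\mathfrak{E}_\theta(bX)$ directly, matching the two-sided bound.

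**Main obstacle.** The genuinely delicate step is the uniform-theoretic comparison underpinning~(1): verifying that the uniformity on $G$ induced by its $\tau_p$-representation in $bX$ is coarse enough that $b_rG$ dominates $\mathfrak{E}_\theta(bX)$. This requires knowing that the totally bounded equiuniformity attached to $bX$ (the precompact replica of $R_{\Sigma_X}$ or of $\mathcal V_G$) is contained in the Roelcke replica, which is where the neutrality of $H$, the absence of nontrivial invariant subgroups, and total boundedness of $\mathcal U_X$ must all be used—neutrality to control $L\wedge R$ via the open coset covers $\{Ox\}$, and total boundedness to guarantee $R_{\Sigma_X}$ itself (not merely its replica) is the relevant precompact uniformity so that $e_{bX}G$ is a proper compactification comparable with $b_rG$. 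Once this containment $L\wedge R\supset$ (replica of $\mathcal V_G$) is secured, the rest is the formal poset manipulation with $\mathfrak{E}$ and $\mathfrak{E}_\theta$ described above.
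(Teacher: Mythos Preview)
Your argument for part~(2) is correct and coincides with the paper's. The problem is part~(1): the inequality you try to establish, namely $b_rG\geq\mathfrak{E}_\theta(bX)=e_{bX}G$, is \emph{false} in general under the hypotheses of the proposition. A concrete counterexample is furnished later in the paper: for an ultrahomogeneous discrete chain $X$ that is not continuously ordered, $H=\st_x$ is neutral (Remark~\ref{propchain}(4)), $X=G/H$, $bX=b_mX$ is the maximal $G$-compactification (Proposition~\ref{betaG}(b)), yet $b_rG<e_{b_mX}G$ strictly (Theorem~\ref{Roelcke precomp4-2-1}(b) and Corollary~\ref{contchain}). Your uniformity reasoning goes astray precisely where you write that ``$e_{bX}G$ corresponds to a totally bounded uniformity coarser than $L\wedge R$'': from $L\wedge R\subset R_{\Sigma_X}$ and $\mathcal V_G\subset R_{\Sigma_X}$ you get that both $L\wedge R$ and $\mathcal V_G$ sit below $R_{\Sigma_X}$, but no comparison between them follows, and indeed none holds in general. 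The containment ``$L\wedge R\supset(\text{replica of }\mathcal V_G)$'' you flag as the main obstacle is not just delicate---it can fail.

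The paper avoids this trap by never comparing $b_rG$ with $e_{bX}G$ directly. Instead it exhibits an onto $G$-map $\tilde q\colon b_rG\to bX$ between the \emph{phase spaces}: the quotient map $q\colon G\to G/H=X$ is uniformly continuous from $(G,L\wedge R)$ to $(X,\mathcal U_X)$ because, by neutrality of $H$, the maximal equiuniformity $\mathcal U_X$ on $G/H$ coincides with the quotient of the Roelcke uniformity (this is \cite[Proposition~3.25]{Kozlov}); since $\mathcal U_X$ is totally bounded, $q$ factors through the precompact replica $(L\wedge R)_{fin}$ and extends to the completions, giving $\tilde q\colon b_rG\to bX$. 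Now Proposition~\ref{mapelliscomp} applies to the $G$-map $\tilde q$ and yields $e_{b_rG}G\geq e_{bX}G$, i.e.\ $\mathfrak{E}(b_rG)\geq\mathfrak{E}_\theta(bX)$. The neutrality hypothesis enters exactly here, to identify $\mathcal U_X$ with the Roelcke quotient; your sketch invokes neutrality only vaguely and never uses this identification.
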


\begin{proof} (1) Let  $q: G\to X$ be the quotient map. $\mathcal U^{max}_{X}$ is the quotient uniformity of the right uniformity $R$ on $G$. By~\cite[Proposition 3.25]{Kozlov} $\mathcal U^{max}_{X}=\mathcal L\wedge\mathcal R$, where $\mathcal L\wedge\mathcal R$ is the quotient uniformity of the Roelcke uniformity $L\wedge R$ on $G$. 

The surjective $G$-map $q: (G,  L\wedge R)\to (X, \mathcal L\wedge\mathcal R)$ is uniformly continuous. Since $\mathcal L\wedge\mathcal R$ is totally bounded, for the precompact replica $(L\wedge R)_{fin}$ of $L\wedge R$ the map  $q: (G, (L\wedge R)_{fin})\to (X, \mathcal L\wedge\mathcal R)$ is uniformly continuous and can be extended to an onto $G$-map  $\tilde q: b_r G\to\beta_G X$. By Proposition~\ref{mapelliscomp} ${\mathfrak E} (b_r G)\geq {\mathfrak E}_{\theta} (\beta_G X)$. 

\medskip

(2) If ${\mathfrak E}_{\theta} (\beta_G X)\geq b_r G$, then $\mathfrak{E}({\mathfrak E}_{\theta} (\beta_G X))={\mathfrak E}_{\theta} (\beta_G X)$ by Remark~\ref{compmapsth} and ${\mathfrak E}_{\theta} (\beta_G X)\geq\mathfrak{E} (b_r G)$ by Corollary~\ref{order}. From (1) the equality ${\mathfrak E}_{\theta} (\beta_G X)=\mathfrak{E} (b_r G)$ follows.
\end{proof}
 
\begin{rem}\label{nocomp}
{\rm If ${\mathfrak E}_{\theta} (\beta_G X)=e_{\beta_G X} G\geq b_r G$, then there are no Ellis compactifications $b G$ of $G$ such that $b_r G< b G<e_{\beta_G X} G$ (see, for instance,~\cite[Theorem 4.3]{KozlovLeiderman} or~\cite[Theorem 2.22]{KozlovSorin}).}
\end{rem}


\section{Permutation group of a discrete space}\label{permutat}

Let $X$ be a discrete (infinite) space, $G$ is a subgroup of the permutation group ${\rm S}(X)$. A discrete space $X$ can be considered as a metric space with distance $1$ between distinct points. Thus ${\rm S}(X)$ is an isometry group and $G$ is its subgroup. The topology of pointwise convergence $\tau_p$ is the least admissible group topology on $G$ for the action $\theta: G\times X\to X$, $\theta (g, x)= g(x)$, $g\in G$, $x\in X$. The base of the nbds of the unit $e$ of $(G, \tau_p)$ is formed by clopen subgroups 
$$\st_{\sigma}=\{g\in G\ |\ \theta (g, x)=x,\ x\in\sigma\},\  \sigma\in\Sigma_X.$$

\begin{thm}\label{Roelcke precomp4-1} Let $X$ be a discrete space and $G=({\rm S}(X), \tau_p)$. Then 
\begin{itemize}
\item[{\rm (a)}] the Alexandroff one-point compactification $\alpha X=X\cup\{\infty\}$ of $X$ is the unique $G$-compactification of $X$,
\item[{\rm (b)}] $G$ is Roelcke precompact and $e_{\alpha X} G=b_r G$, 
\item[{\rm (c)}]  $b_r G$ is the set of self-maps $f$ of $\alpha X$ in the topology of pointwise convergence such that
$f$ is a bijection on $Y\subset X$ and $f(\alpha X\setminus Y)=\infty$, $Y$ is an arbitrary subset of $X$, 
\item[{\rm (d)}] $b_r G$ is a sim-compactification of $G$, isomorphic to the symmetric inverse semigroup $I_X$ on $X$ and a WAP-compactification of $G$.
\end{itemize}
\end{thm}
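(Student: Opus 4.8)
The plan is to prove the four statements in turn, using throughout that $G=(\mathrm{S}(X),\tau_p)$ acts ultratransitively: for every $\sigma\in\Sigma_X$ the stabilizer $\st_\sigma$ fixes $\sigma$ pointwise and acts transitively on $X\setminus\sigma$, and every finite partial injection of $X$ extends to a permutation since $X$ is infinite. For (a) I would first pin down the maximal $G$-compactification. As the action is transitive, $X=G/\st_{x_0}$, so by the description in \S\ref{unifgr} of the maximal equiuniformity on a coset space its basic covers are $\{\st_\sigma y\mid y\in X\}$. Ultratransitivity gives $\st_\sigma y=\{y\}$ for $y\in\sigma$ and $\st_\sigma y=X\setminus\sigma$ otherwise, so these covers are exactly $u_\sigma=\{\{y\}\mid y\in\sigma\}\cup\{X\setminus\sigma\}$. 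They generate the totally bounded uniformity whose Samuel compactification adjoins a single point $\infty$ with cofinite neighbourhood trace; hence $\beta_G X=\alpha X$ is the maximal $G$-compactification. Uniqueness is then immediate: for any $G$-compactification $bX$ there is a map $\varphi\colon\alpha X\to bX$ of compactifications with $\varphi|_X=\id$, and since $\alpha X\setminus X=\{\infty\}$ is a single point and $\varphi$ is onto, $\varphi$ is a continuous bijection of compacta, hence a homeomorphism, so $bX=\alpha X$.

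For (b) I would apply Proposition~\ref{a2-2}. For the basic entourage ${\rm U}_\tau$ of $\alpha X$ (which isolates the finite set $\tau$ and lumps the rest together with $\infty$) one checks directly that $O_{{\rm U}_\tau}\cap G=\st_\tau$. Taking ${\rm V}={\rm U}_\sigma$ in $(\star\star)$, the hypothesis $(\theta(g,x),\theta(h,x))\in{\rm U}_\sigma$ for $x\in\sigma$ says that on $\sigma$ the maps $g,h$ either agree or both leave $\sigma$; using the standard description of the double coset $\st_\sigma g\st_\sigma$ by its pattern on $\sigma$, together with the extendability of partial injections, this forces $h\in\st_\sigma g\st_\sigma$, which is precisely the conclusion of $(\star\star)$. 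Thus $\mathcal V_G=R_{\Sigma_X}$, and by Remark~\ref{Rlkcomp} the group $G$ is Roelcke precompact with $b_r G\le e_{\alpha X}G$; since $L\wedge R=R_{\Sigma_X}$ for this discrete action (Remark~\ref{a2-1}) and $e_{\alpha X}G$ is the completion of $(G,\mathcal V_G)$, I conclude $e_{\alpha X}G=b_r G$.

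For (c) I would compute the pointwise closure $\cl(\imath_{\alpha X}(G))$ by hand. Writing $\bar g=\imath_{\alpha X}(g)$ for the extension fixing $\infty$, a pointwise limit $f$ satisfies $f(\infty)=\infty$, and for $x\in X$ either $g_\lambda(x)$ is eventually a fixed value in $X$ or escapes every finite set, giving $f(x)=\infty$. With $Y=\{x\in X\mid f(x)\in X\}$, injectivity of the $g_\lambda$ makes $f|_Y$ injective and $f(\alpha X\setminus Y)=\{\infty\}$; conversely every such $f$ is a limit of permutations agreeing with $f$ on larger pieces of $Y$ and pushing the rest toward $\infty$, which is the description in (c). For (d) the map $f\mapsto f|_Y$ is then a bijection onto $I_X$ carrying composition to composition of partial bijections, hence a monoid isomorphism (the constant map to $\infty$ being the zero). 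Each $f$ is moreover a continuous self-map of $\alpha X$, since injectivity makes preimages of finite sets finite; consequently left translation $f\mapsto h\circ f$ is pointwise continuous, so multiplication is separately continuous and $e_{\alpha X}G$ is a semitopological monoid, while a short net argument shows the inversion $f\mapsto f^{*}$ is continuous. Thus $b_r G$ is a sim-compactification isomorphic to $I_X$. That it is a WAP-compactification I would get from maximality: every sm-compactification of $G$ has uniformity contained in $L\wedge R$, hence is dominated by the Samuel compactification $b_r G$ of $(G,L\wedge R)$, and since $b_r G$ is itself an sm-compactification it is the maximal one, i.e. the WAP-compactification.

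The closure computation in (c) and the isomorphism with $I_X$ are routine. I expect the two delicate points to be the verification of $(\star\star)$ in (b) --- namely that a condition imposed only on $\sigma$ already fixes the full two-sided double-coset pattern of $g$ and $h$ --- and the maximality claim in (d), which rests on the fact, from the classification of proper semigroup compactifications, that every sm-compactification has uniformity inside $L\wedge R$.
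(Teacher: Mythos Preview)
Your proof follows the paper's approach closely and is essentially correct in all four parts. One technical wrinkle in (b): condition $(\star\star)$ of Proposition~\ref{a2-2} must be verified for \emph{every} entourage $U$ (equivalently for every finite $\tau$ with $O_{U_\tau}=\st_\tau$), whereas your argument literally takes $U=U_\sigma$ and concludes $h\in\st_\sigma g\st_\sigma$, which is the conclusion of $(\star\star)$ only for that particular $U$; the paper instead fixes $\sigma\subset\sigma'$, takes $V=U_{\sigma'}$, and produces $g'\in g\st_\sigma$ with $h\in\st_{\sigma'}g'$. However, your computation already suffices by a shorter route you also invoke: since $L\wedge R=R_{\Sigma_X}$ here (Remark~\ref{a2-1}), it is enough to show that each basic Roelcke cover $\{\st_\sigma g\st_\sigma\mid g\in G\}$ is refined by the $\mathcal V_G$-cover determined by the pair $(\sigma,U_\sigma)$, and that is exactly what your double-coset claim establishes; combined with $\mathcal V_G\subset R_{\Sigma_X}$ from Proposition~\ref{lemincl} this gives $\mathcal V_G=L\wedge R$ directly, without the full strength of Proposition~\ref{a2-2}. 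So the logic is sound, only the attribution to $(\star\star)$ should be adjusted.
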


\begin{proof} (a) $X=G/\st_x$, $x\in X$. The base of the maximal equiuniformity $\mathcal U^{max}_X$ on $X$ is formed by the uniform covers 
$$u_{\sigma}=\{\st_{\sigma}x=\theta (\st_{\sigma}, x)\ |\ x\in X\},\ \sigma\in\Sigma_X,\ {\rm(}u_{\sigma'}\succ u_{\sigma}\ \mbox{if}\ \sigma\leq\sigma'{\rm )},$$
see, for example, \cite{ChK}. Since $\st_{\sigma}x=x$ if $x\in\sigma$ and  $\st_{\sigma}x=X\setminus\sigma$ if $x\notin\sigma$, they are of the  form 
$$u_{\sigma}=\{\{\{x\}\ |\ {x\in\sigma}\},\ X\setminus\sigma\},\ \sigma\in\Sigma_X,$$
and coincides with the subspace uniformity on $X$ as the subset of $\alpha X$. Hence, $\alpha X$ is the maximal (and the unique) $G$-compactification of $X$. 

\medskip

(b) By Remark~\ref{a2-1} $L\wedge R=R_{\Sigma_X}$. Since  $\st_{\infty}=G$ for the extended action $\tilde\theta: G\times\alpha X\to\alpha X$ (continuity of each $g\in G$ yields that $\tilde\theta (g, \infty)=\infty$),  $R_{\Sigma_X}=R_{\Sigma_{\alpha X}}$.

By Proposition~\ref{lemincl} $\mathcal V_G (\alpha X)\subset R_{\Sigma_X}$. To end the proof of (b) it is enough to check the inclusion $\mathcal V_G (\alpha X)\supset R_{\Sigma_X}$. In the case of the equiuniformity  $\mathcal U^{max}_X$ the condition $(\star\star)$ in Proposition~\ref{a2-2} can be reformulated in the following form 
$$\begin{array}{c}
\mbox{\rm if for}\ \sigma\subset\sigma'\in\Sigma_X\ \mbox{and}\ g, h\in G,\ \mbox{the inclusion}\ \theta (h, x)\in\st_{\sigma'}\theta (g, x),\  x\in\sigma,\ \mbox{\rm holds},\\
\mbox{\rm then there exists}\ \ g'\in g\st_{\sigma}\ \ \mbox{\rm such that}\ \ h\in\st_{\sigma'}g'.\\ 
\end{array}$$
For any $g, h\in G$ such that $\theta (h, x)\in\st_{\sigma'}\theta (g, x)$, $x\in\sigma$, we consider, without loss of generality, the following. 
 
Put $s=\{x\in\sigma\ |\ \theta (g, x)\in\sigma'\}\subset\sigma$. Then $\theta (h, x)=\theta (g, x)\in\sigma'$, $x\in s$. If $s\ne\sigma$, then $\theta (g, x)\not\in\sigma'$, $\theta (h, x)\not\in\sigma'$ for $x\in\sigma\setminus s$.

If $s=\sigma$, then $hg^{-1}\in\st_{\sigma'}$ and $h\in\st_{\sigma'}g$, $g\in g\st_{\sigma}$.

\medskip 
 
If $s\ne\sigma$, then there exists $f\in G$ such that the points $\sigma'\cup\{\theta (g, y)\ |\ y\in\sigma\setminus s\}$, are mapped to the points $\sigma'\cup\{\theta (h, y)\ |\ y\in\sigma\setminus s\}$  respectively (for fixed orders on $\sigma'$ and $\sigma\setminus s$).  $f\in\st_{\sigma'}$ and $\theta (h, x)=\theta (fg, x),\ x\in\sigma$. 
Hence, $h^{-1}f g\in\st_{\sigma}$. Consequently $f g\in h\st_{\sigma}$ (equivalently $h\in (fg)\st_{\sigma}$) and there is $g'\in g\st_{\sigma}$ such that $h\in \st_{\sigma'}g'$. The  inclusion  $\mathcal V_G (\alpha X)\supset R_{\Sigma_X}$ and, hence, the equalities $L\wedge R=\mathcal V_G (\alpha X)$, $e_{\alpha X} G=b_r G$ are proved. By Remark~\ref{Rlkcomp} $G$ is Roelcke precompact. 

\medskip

(c) At first, let us note that for any $f\in e_{\alpha X} G=\cl~\imath_{\alpha X} (G)$ there are no points $x\ne y\in X$ such that $f(x)=f(y)\in X$. Indeed, suppose that there are $x\ne y\in X$ such that $f(x)=f(y)\in X$. One can take the nbd $W=\{h\in e_{\alpha X} G\ |\ h(x)=h(y)=f(x)\}$ of $f$. Then $W\cap\imath_{\alpha X} (G)=\emptyset$ and $f\not\in e_{\alpha X} G$. 

Secondly, $f(\infty)=\infty$ for any $f\in e_{\alpha X} G$. Indeed, if $f(\infty)=x\in X$, then take the nbd $W=\{h\in e_{\alpha X} G\ |\ h(\infty)=x\}$ of $f$. However, $W\cap\imath_{\alpha X} (G)=\emptyset$ since $g(\infty)=\infty$ for any $g\in\imath_{\alpha X} (G)$.

It remains to check that $f\in e_{\alpha X} G$ if $f(\infty)=\infty$, $f$ is a bijection on $Y\subset X$ and $f(X\setminus Y)=\infty$, where $Y$ is an arbitrary subset of $X$. Any nbd $W$ of $f$ is of the form $\{g\in (\alpha X)^{\alpha X}\ |\ g(x)\in O_{f(x)},\ x\in\sigma\}\cap e_{\alpha X} G$, $\sigma\in\Sigma_X$, where $O_{f(x)}=f(x)$ if $f(x)\ne\infty$, and $O_{f(x)}=\alpha X\setminus\sigma_x$, $\sigma_x\in\Sigma_X$, if $f(x)=\infty$. By finiteness of $\sigma$ and $\sigma_x$, $x\in\sigma$, there is $h\in G$ such that $h(x)=f(x)$ if $x\in\sigma$, $f(x)\in X$, and $h(x)\not\in\sigma_x$, if $x\in\sigma$, $f(x)=\infty$. Evidently, $h\in W$ and, hence, $f\in e_{\alpha X} G$.

\medskip

(d) From (b) it follows that $b_r G=e_{\alpha X} G$ is a right topological semigroup. If one shows that all elements $f\in e_{\alpha X} G$ are continuous (as selfmaps of $\alpha X$), then multiplication on the left is continuous. To check the continuity of $f$ it is sufficient to check the continuity of $f$ at the point $\infty$.

Suppose that $f$ is not continuous at the point $\infty$. Then there exists a nbd $W=\alpha X\setminus\sigma$ of $\infty$, $\sigma\in\Sigma_X$, such that the set $\{x\in X\ |\ f(x)\in\sigma\}$ is infinite.  Since $\sigma$ is finite, there exist $x\ne y\in X$ such that $f(x)=f(y)$. Therefore,  $f\not\in e_{\alpha X} G$. The obtained contradiction shows that all  $f\in e_{\alpha X} G$ are continuous and $b_r G$ is a semitopological semigroup.

For $f\in  b_r G$ put $D(f)=\{x\in X\ |\ f(x)\ne\infty\}$. Define a map $\xi: b_r G\to I_X$, $\xi (f)=f|_{D(f)}$. 
$$\xi (f\circ g)=(f\circ g)|_{D(g)\cap g^{-1}(D(f))}=f|_{g(D(g)\cap g^{-1}(D(f)))\cap D(f)}\circ g|_{D(g)\cap g^{-1}(D(f))}=$$
$$=f|_{D(f)\cap g(D(g))}\circ g|_{D(g)\cap g^{-1}(D(f))}=f|_{D(f)}\circ g|_{D(g)}=\xi(f)\circ\xi (g).$$
Hence, $\xi$ is a homomorphism. If $\xi (f)=\xi (g)$, then $f|_{D(f)}=g|_{D(g)}$ and, therefore, $f=g$. Evidently $\xi$ is a surjection. Hence, $\xi$ is an isomorphism.

Take $f\in b_r G$, then 
$$f^*=\left\{\begin{array}{ccl}
f^*(x)=y & \mbox{if} & f(y)=x,\ x, y\in X, \\
f^*(x)=\infty & \mbox{if} & x\not\in f(D(f)), \\
f^*(\infty)=\infty &  &  \\
\end{array}\right.$$
is an inverse of $f$ due to isomorphism $\xi$. The nbd of $f^*$ from the subbase of $\tau_p$ is either $[x, f^*(x)]$ for $x\in D(f^*)$, or $[x, \alpha X\setminus \sigma]$, $\sigma\in\Sigma_X$, for $x\not\in D(f^*)$.

For a nbd $[x, y=f^*(x)]$ of $f^*$ take an open subset $[y, x]$. Then $f\in [y, x]$ and for any $g\in [y, x]$ $g^*\in [x, y]$. 

For a nbd $[x, \alpha X\setminus \sigma]$ of $f^*$, $f^*(x)=\infty$, take a nbd $W=\bigcap\{[y, \alpha X\setminus\{x\}]\ |\ y\in\sigma\}$ of $f$ ($x\not\in f(D(f))$, therefore, $f\in W$). If $g\in W$, then $g(y)\ne x$, $y\in\sigma$. Hence, $g^*(x)\not\in\sigma$ and $g^*\in W$.  Therefore, inversion is continuous. 

Any sm-compactification of $G$ is less than or equal to $b_r G$~\cite{KozlovLeiderman}. Since $b_r G$ is a sim-compactification of $G$, $b_r G$ is the maximal sm-compactification of $G$ and a WAP-compactification of $G$.
\end{proof}

From~\cite{Malcev} we have the description of all proper ideals of  the symmetric inverse semigroup $I_X$ (where $\emptyset$ is $0$): 
$$I_X^{\delta}=\{f\in I_X\ |\ |{\rm D} (f)|\leq\delta\},\  1\leq\delta<|X|,\ I_X^{0}=\emptyset.$$ 
Ideals $I_X^{\delta}$ are invariant under the involution defined by the inverse. 

\begin{cor}\label{sisdiscr}
In $b_r G$ {\rm(}$G=({\rm S} (X), \tau_p)${\rm)}   
$$I_X^n=\{f\in b_r G\ |\ |D(f)|\leq n\},\ n\in\mathbb N,$$
are compact ideals. The Rees quotients $b_r G/\rho_{I_X^n}$, $n\in\mathbb N$, are sim-compactifications of $G$.

If $X$ is countable, then sim-compactifications $b_r G$ and $b_r G/\rho_{I_X^n}$, $n\in\mathbb N$, of $G$ are metrizable {\rm(}homeomorphic to the Cantor set{\rm)}.
\end{cor}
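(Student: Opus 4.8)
The plan is to treat the three assertions in turn, leaning on the isomorphism $\xi\colon b_r G\to I_X$ of Theorem~\ref{Roelcke precomp4-1}(d), under which $f\mapsto f|_{D(f)}$ and the function $D$ is preserved, so that the sets $I_X^n$ inside $b_r G=e_{\alpha X}G$ are exactly the images of the Mal'cev ideals of $I_X$. The compactness and ideal property of each $I_X^n$ is already obtained in the paragraph preceding the statement (the neighbourhood $W=\bigcap_{i=1}^{n+1}[x_i,f(x_i)]$ separates any $f$ with $|D(f)|>n$ from $I_X^n$, and Mal'cev's description gives the ideal property); I would simply record this. For the second assertion I would invoke Lemma~\ref{ideals}(c): since $e_{\alpha X}G$ is a semitopological inverse monoid with continuous inverse and $I_X^n$ is a compact ideal with $(I_X^n)^*=I_X^n$, the Rees quotient $e_{\alpha X}G/\rho_{I_X^n}$ is a semitopological inverse monoid with $0$, and by Lemma~\ref{ideals}(b) its induced involution is continuous.

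It then remains to check that this quotient really is a proper $G$-compactification of $G$. Writing $q\colon e_{\alpha X}G\to e_{\alpha X}G/\rho_{I_X^n}$ for the quotient homomorphism (an elementary map by the Remark following Lemma~\ref{ideals}), I would observe that every $g\in\imath_{\alpha X}(G)$ has full domain $|D(g)|=|X|>n$, so $\imath_{\alpha X}(G)\cap I_X^n=\varnothing$; hence $q\circ\imath_{\alpha X}$ is injective, and it is a dense topological embedding because $q$ is continuous and surjective and $\imath_{\alpha X}(G)$ is dense in $e_{\alpha X}G$. Since $I_X^n$ is invariant under the left action $\tilde\alpha(g,f)=\imath_{\alpha X}(g)\circ f$ (ideals are two-sided), the action descends to the quotient and $q$ is a $G$-map whose multiplication restricts to this action on $G$; the identity $\id_{\alpha X}$ lies outside $I_X^n$, so the quotient is a monoid. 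Thus $e_{\alpha X}G/\rho_{I_X^n}$ is a sim-compactification of $G$.

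For the last assertion assume $X$ is countable. Then $\alpha X$ is a compact metrizable, zero-dimensional space (a convergent sequence), so the countable power $(\alpha X)^{\alpha X}$ is compact, metrizable and zero-dimensional, and hence so is its closed subspace $e_{\alpha X}G=\cl(\imath_{\alpha X}(G))$; collapsing the compact ideal $I_X^n$ to a point keeps the quotient compact and metrizable as well. To identify each of these spaces with the Cantor set via Brouwer's characterization I would verify that they are perfect and totally disconnected. Total disconnectedness of $e_{\alpha X}G$ is inherited from $(\alpha X)^{\alpha X}$; for the quotient it follows because $I_X^n$ is closed in the compact zero-dimensional space $e_{\alpha X}G$, so the clopen sets containing $I_X^n$ project to a clopen base at $0$, while off $0$ the map $q$ is a local homeomorphism. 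Perfectness is the point that needs the most care: for $e_{\alpha X}G$ and for points distinct from $0$ I would perturb a given $f$ inside any basic neighbourhood by redefining it on the infinite set $X\setminus\sigma$ (there are infinitely many partial bijections available there, all lying in $e_{\alpha X}G$ by Theorem~\ref{Roelcke precomp4-1}(c)), keeping the domain larger than $n$; and I would show $0$ is not isolated by noting that $I_X^n$ is not open (its points admit domain-enlarging perturbations), so every saturated open set containing $I_X^n$ strictly contains it and its image meets the complement of $0$. The main obstacle is exactly this topological bookkeeping for the quotient --- ensuring that collapsing the non-open closed ideal $I_X^n$ preserves zero-dimensionality and perfectness at the new point $0$ --- since everything else reduces to Lemma~\ref{ideals} and the structure already recorded for $e_{\alpha X}G$.
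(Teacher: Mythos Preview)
Your proposal is correct and follows essentially the same route as the paper: compactness and the ideal property come from the paragraph preceding the statement, the sim-compactification claim from Lemma~\ref{ideals}, and the Cantor-set identification from metrizability of $(\alpha X)^{\alpha X}$ together with zero-dimensionality and perfectness. You are in fact more careful than the paper in two places---checking explicitly that the Rees quotient is a proper $G$-compactification (the paper invokes Lemma~\ref{ideals} alone, which strictly speaking only yields the monoid structure) and verifying zero-dimensionality and non-isolation at the collapsed point $0$---but these are elaborations of the same argument, not a different approach.
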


\begin{proof}
$\forall\ n\in\mathbb N$ $I_X^n$ is a compact ideal. Indeed, if $f\in I_X$ and $D(f)>n$, then $W=\bigcap\limits_{i=1}^{n+1}\{[x_i, f(x_i)]\ |\ x_i\in D(f)\}$ is a nbd of $f$ and $f\cap I_X^n=\emptyset$. $b_r G/\rho_{I_X^n}$, $n\in\mathbb N$, are sim-compactifications of $G$ by Lemma~\ref{ideals}. 

If $X$ is a countable set, then $\alpha X$ is metrizable and countable, and, therefore, $(\alpha X)^{\alpha X}$ is metrizable. $b_r G$ is metrizable as a subset of $(\alpha X)^{\alpha X}$ and $b_r G/\rho_{I_X^n}$ is metrizable as perfect image of $b_r G$, $n\in\mathbb N$. Further, $b_r G$ and $b_r G/\rho_{I_X^n}$, $n\in\mathbb N$, are zero-dimensional and dense in themselves compact spaces, homeomorphic to the Cantor set.
\end{proof}

\begin{rem}
{\rm  An action of a group $G$ on a set $X$ is strongly $n$-transitive, $n\geq 1$, if for any families of distinct $n$ points $x_1, \ldots, x_n$ and $y_1, \ldots, y_n$ there exists $g\in G$ such that $g(x_k)=y_k$, $k=1,\ldots, n$. 

An action $G\curvearrowright X$, which is strongly $n$-transitive for all $n\in\mathbb N$, is called ultratransitive.

\medskip

(1) Let $G$ be a subgroup of ${\rm S} (X)$ which acts ultratransitively on $X$. Then $G$ is a dense subgroup 
of $({\rm S} (X), \tau_p)$~\cite[Fact 2 (2)]{Sorin2025} and $b_r G$ is topologically isomorphic to $b_r {\rm S} (X)$.

\medskip

(2)  Any group $G$ which acts ultratransitively on $X$ is {\it oligomorphic} and, hence, Roelcke precompact~\cite{Tsan} (see, also~\cite{Sorin2025}). 

\medskip

(3) The subgroup ${\rm S}_{<\omega}(X)$ of the group ${\rm S}(X)$ whose elements has finite supports acts ultratransitively on $X$. Hence, ${\rm S}_{<\omega} (X)$ is a dense subgroup of $({\rm S}(X), \tau_p)$.  The Roelcke precompactness of $({\rm S}_{<\omega}(X), \tau_p)$ is proved in~\cite{Ban}. 

The group whose elements have finite supports is a subgroup of any Houghton's group (see, for example, \cite{Cox}). Hence, Houghton's groups act ultratransitively on the corresponding countable discrete spaces and are oligomorphic.}
\end{rem}


\section{Automorphism groups of ultrahomogeneous chains and LOTS}\label{chain}

For an infinite {\it chain} ({\it linearly ordered set}) $X$ let $\aut(X)$ be the group of {\it automorphisms} (order-preserving bijections) of $X$. A chain $X$ is called {\it homogeneous} if the action $\aut(X)\curvearrowright X$ is transitive {\rm(}for any $x, y\in X$ there exists $f\in\aut(X)$ such that $f(x)=y${\rm)}. 
A chain $X$ is called {\it ultrahomogeneous} if for any families of different $n$ points $x_1<\ldots< x_n$ and   $y_1<\ldots< y_n$ there exists $g\in\aut(X)$ such that $g(x_k)=y_k$, $k=1,\ldots, n$, $n\in\mathbb N$.

\begin{rem}\label{propchain}
{\rm 
(1) An chain $X$ is ultrahomogeneous iff $X$ is {\rm 2}-{\it homogeneous} (for any pairs of points  $x<y$ and $x'<y'$ there exists $g\in\aut(X)$ such that $g(x)=x'$, $g(y)=y'$)~\cite[Lemma 1.10.1]{Glass} (see also~\cite{Ovch}). 

(2) An ultrahomogeneous chain is {\it densely ordered}~\cite[\S\ I.3]{Engelking} (and, hence, has no {\it jumps}). 

(3) A densely ordered chain without {\it proper gaps} is called {\it continuously ordered}~\cite[\S\ I.3]{Engelking}. If a chain $X$ is ultrahomogeneous and has a proper gap, then there is a gap on every interval $J$ of $X$ ($J\subset X$ is an {\it interval} if $J=(a, b)=\{x\in X\ |\ a<x<b\}$, $a<b\in X$, $J=(\gets, b)=\{x\in X\ |\ x<b\}$,  $J=(a, \to)=\{x\in X\ |\ a<x\}$,  $a, b\in X$, or $J=X$).

(4) Let $X$ be a {\it discrete chain} (a chain in the discrete topology). On the group $\aut(X)$, the {\it permutation topology} $\tau_{\partial}$ (topology of pointwise convergence for the action on a discrete space) is the least admissible group topology for the uniformly equicontinuous action (action by isometries) $\aut(X)\curvearrowright X$, $\st_x$ is a neutral subgroup, $x\in X$. $(\aut(X), \tau_{\partial})$ is a subgroup of $({\rm S}(X), \tau_p)$. 

If $X$ is an ultrahomogeneous discrete chain, then $X=(\aut(X), \tau_{\partial})/\st_x$, $x\in X$ (see, for instance, \cite{Kozlov}). 

(5) Let $X=(X, \tau)$ be LOTS ({\it linearly ordered space}), the topology $\tau$ is induced by linear order. The topology of pointwise convergence $\tau_p$ is the least admissible group topology for the action $\theta: \aut(X)\times X\to X$~\cite{Ovch} or~\cite{Sorin}.

If $X$ is an ultrahomogeneous LOTS, then $X=(\aut (X), \tau_p)/\st_x$, $x\in X$. Indeed, fix $x\in X$ and put $q: \aut(X)\to X$, $q(g)=\theta (g, x)$. Since $q(h)=q(g)\Longleftrightarrow h\in g\st_x$ and $q(G)=X$, the continuous bijection $\aut (X)/\st_x\to X$ is defined. Since $q$ is open (if $O=(a, b)$ and $y\in X$, then $q([y, O])=\theta ([y, O], x)=(\gets, b)$ if $x<y$, $O$ if $x=y$ and $(a, \to)$ if $x>y$), $X$ and $(\aut (X), \tau_p)/\st_x$ are homeomorphic.

(6) If $X$ is a LOTS, then, according to the description in~\cite{Fed}, the least linearly ordered compactification $c_m X$ \cite{Kauf} is generated by replacing each gap in $X$ by a point with a natural continuation of the order. If $X$ is densely ordered, then $c_m X$ is connected. If $X$ is continuously ordered, then $c_m X=\{\inf\}\cup X\cup\{\sup\}$ is the unique linearly ordered compactification of $X$. 

(7) The ultrahomogeneity of chain $X$ is equivalent to the oligomorphicity of the action of $\aut (X)\curvearrowright X$, and is also equivalent to the Roelcke precompactness of the group $\aut(X)$ (both in the permutation topology and in the topology of pointwise convergence)~\cite{Sorin2025}.}
\end{rem}

\begin{df} An action $G\curvearrowright X$ of a subgroup $G$ of $\aut (X)$ on a chain $X$ is strongly $n$-transitive, $n\geq 1$, if for any families of $n$ points $x_1< \ldots<x_n$ and $y_1<\ldots<y_n$ there exists $g\in G$ such that $g(x_k)=y_k$, $k=1,\ldots, n$. 

An action $G\curvearrowright X$, which is strongly $n$-transitive for all $n\in\mathbb N$, is called ultratransitive.
\end{df}

\begin{rem}\label{denseultr}
{\rm If an action $\theta: G\times X\to X$ on a discrete chain $X$ is ultratransitive, then $G$ is a dense subgroup $(\aut (X), \tau_{\partial})$ and $X$ is a coset space of  $(G, \tau_{\partial})$.

Indeed, for any $g\in\aut (X)$ and a nbd $W=\bigcap\{[x, g(x)]\ |\ x\in\sigma\}$, $\sigma\in\Sigma_X$, of $g$, ultratransitivity of the action $G\curvearrowright X$ yields that there exists $f\in G\cap W$. The rest follows from item (4) of Remark~\ref{propchain}.

\medskip

If an action $G\curvearrowright X$ on a LOTS $X$ is ultratransitive, then $G$ is a dense subgroup $(\aut (X), \tau_p)$ and $X$ is a coset space of  $(G, \tau_p)$.

Indeed, for any $g\in\aut (X)$ and a nbd $W=\bigcap\{[x, O_{g(x)}]\ |\ x\in\sigma\}$, $\sigma\in\Sigma_X$, $O_{g(x)}$ is a nbd of $g(x)$, $x\in\sigma$, of $g$, ultratransitivity of the action $G\curvearrowright X$ yields that there exists $f\in G\cap W$. The rest can be proved similarly as in item (5) of Remark~\ref{propchain}.}
\end{rem}

\subsection{$G$-compactifications of ultrahomogeneous discrete chains and LOTS}\label{compact}

A discrete chain $X$ is a GO-space ({\it generalized ordered space}). If $X$ is ultrahomogeneous, then there is the smallest LOTS 
$$X\otimes_{\ell}\{-1, 0, 1\}$$
(topology of linear order is induced by the lexicographic order on $X\times\{-1, 0, 1\}$),  
in which $X$ is a dense subspace, and $X\otimes_{\ell}\{-1, 0, 1\}$ is naturally embedded in any other LOTS in which $X$ is dense~\cite{Miwa}. Hence, any linearly ordered compactification of $X$ is a linearly ordered compactification of $X\otimes_{\ell}\{-1, 0, 1\}$. 
The least linearly ordered compactification $b_m X$ of $X\otimes_{\ell}\{-1, 0, 1\}$ (and, hence, of $X$) is generated by replacing each gap (also improper) by a point with a natural continuation of the order (gaps in $X\otimes_{\ell}\{-1, 0, 1\}$ and  $X$ are naturally identified). Therefore, $b_m X=X^+\cup X^0\cup X^-\cup\Gamma$, where $X^+=X\times\{1\}$, $X^0=X\times\{0\}$ ($X$ and $X^0$ are identified), $X^-=X\times\{-1\}$, $\Gamma$ is the set of gaps. Each gap is defined by a {\it cut} $(A, B)$ ($A, B\subset X$, $A\cap B=\emptyset$, $A\cup B=X$, if $x\in A$, $y\in B$, then $x<y$)~\cite[\S\ I.3]{Engelking}.  $b_m X$ is a zero-dimensional compact LOTS. If $X$ is continuously ordered, then $b_m X=\{\inf\}\cup X^+\cup X^0\cup X^-\cup\{\sup\}$ is the unique linearly ordered compactification of $X$.

\begin{pro}\label{betaG}
Let $X$ be an ultrahomogeneous chain, and $\aut (X)$ is its automorphism group. 

{\rm (a)} If $X$ is a {\rm LOTS (}$\theta_p: (\aut (X), \tau_p)\times X\to X${\rm)}, then $c _m X=\beta_G X$.

{\rm (b)} If $X$ is a discrete chain {\rm(}$\theta_{\partial}: (\aut (X), \tau_{\partial})\times X\to X${\rm)}, then $b _m X=\beta_G X$.
\end{pro}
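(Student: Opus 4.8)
Both parts will be obtained by computing the maximal equiuniformity $\mathcal U_X$ on $X$ and then identifying its Samuel compactification $\beta_G X$ (the completion of $(X,p\mathcal U_X^{max})$) with a concrete compact LOTS. By Remark~\ref{propchain}(4),(5) we have $X=G/\st_x$ for $G=\aut(X)$ in the respective topology, so by the open coset--space description recalled in \S~\ref{admtop} a base of $\mathcal U_X$ is given by the covers $\gamma_O=\{Oy\mid y\in X\}$, $O\in N_G(e)$. The plan is: (i) compute each $Oy$ explicitly via ultrahomogeneity; (ii) show every $\gamma_O$ is a \emph{finite} cover, so that $\mathcal U_X$ is already totally bounded and $\beta_G X$ is its Samuel compactification; (iii) recognize the resulting finite covers as a base for the subspace uniformity induced on $X$ by the unique uniformity of $c_m X$ (resp.\ $b_m X$), which yields the two equalities. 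A uniform preliminary, valid in both topologies, is the \emph{stability} of orbits: if $y,y'$ lie in the same region determined by the finite data defining $O$, then ultrahomogeneity provides $h\in O$ fixing that data with $h(y)=y'$, and for every $g\in O$ one has $gh^{-1}\in O$ with $(gh^{-1})(y')=g(y)$; hence $Oy\subseteq Oy'$, and by symmetry $Oy=Oy'$. Thus $\gamma_O$ has only finitely many distinct members, as desired in (ii).

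\textbf{Part (a), LOTS with $\tau_p$.} A basic $O\in N_{(\aut(X),\tau_p)}(e)$ has the form $\bigcap_{i=1}^n[z_i,(a_i,b_i)]$ with $z_1<\cdots<z_n$ and $a_i<z_i<b_i$. Using $2$-homogeneity to realize prescribed order--preserving placements, I would compute $Oy$ to be an open interval with endpoints in $X\cup\{\inf,\sup\}$: for $z_k<y<z_{k+1}$ one gets $Oy=(a_k,b_{k+1})$, with the evident one--sided rays for $y<z_1$ or $y>z_n$ and intervals of type $(a_i,b_i)$ for $y=z_i$. Hence $\gamma_O$ is a finite cover of $X$ by open intervals with endpoints approached within $X$, and such covers are exactly the traces on $X$ of finite open interval covers of $c_m X$, each finite open cover of the compact LOTS $c_m X$ refining to one of this form. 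The crucial point is that, since every member of $\gamma_O$ is an \emph{open} interval containing the approached point, no cover separates the two sides of a gap of $X$: points of $X$ tending to a gap from below and from above are never split and collapse to the single gap point, while distinct points of $X$ are separated by a suitable $\gamma_O$. Therefore the Samuel compactification of $(X,\mathcal U_X)$ is $X$ with each gap filled by one point, i.e.\ $c_m X$ (Remark~\ref{propchain}(6)), giving $\beta_G X=c_m X$.

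\textbf{Part (b), discrete chain with $\tau_{\partial}$.} Here a basic $O$ is a stabilizer $\st_\sigma$ with $\sigma=\{s_1<\cdots<s_k\}\in\Sigma_X$. Ultrahomogeneity of the (densely ordered) chain $X$ gives $\st_\sigma y=(s_j,s_{j+1})\cap X$ for $s_j<y<s_{j+1}$, together with the singletons $\{s_i\}$ and the two end rays; thus $\gamma_{\st_\sigma}$ is a finite partition of $X$ into order--convex blocks. In contrast with (a), each $s_i$ now forms its own block, so for every $x\in X$ the points approaching $x$ from the left and from the right remain in distinct blocks for all $\sigma\ni x$. Consequently, in the Samuel compactification each $x$ splits into a left point $x^-$, the point $x$, and a right point $x^+$, while each gap contributes one further point; matching these with the description $b_m X=X^+\cup X\cup X^-\cup\Gamma$ recalled just before the statement yields $\beta_G X=b_m X$.

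\textbf{Main obstacle.} The equivariant extension of the action to $c_m X$ and $b_m X$ by order--homeomorphisms, and the finiteness of the covers $\gamma_O$, are routine. The real work is the precise identification of the Samuel compactification, i.e.\ verifying that the clusters of $\mathcal U_X$ are exactly the points of $c_m X$ (resp.\ $b_m X$). This reduces to the two separation/non--separation checks above: that overlapping \emph{open} intervals in the $\tau_p$ case fill each gap with a single point without splitting points of $X$, and that the \emph{singleton} blocks in the $\tau_{\partial}$ case split every $x$ into $x^\pm$. Handling the endpoints $\inf,\sup$ and, in (b), the improper gaps encoded by $X\otimes_\ell\{-1,0,1\}$ uniformly is where care is needed; ultrahomogeneity (via Remark~\ref{propchain}(1)--(3)) is used throughout to guarantee that each computed block $Oy$ is a full order--interval rather than a proper subset.
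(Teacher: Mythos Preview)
Your proposal is correct and follows essentially the same approach as the paper: compute the maximal equiuniformity $\mathcal U_X$ via the coset--space description, observe that the basic covers $\{Oy\mid y\in X\}$ (resp.\ $u_\sigma$) are finite covers by open intervals (resp.\ finite partitions into convex blocks with singletons), and then identify $\mathcal U_X$ with the subspace uniformity inherited from $c_m X$ (resp.\ $b_m X$). The only presentational difference is in the last step: the paper verifies the equality of uniformities by explicitly refining an arbitrary finite open cover of $c_m X$ (resp.\ clopen cover of $b_m X$) into one whose trace on $X$ lies in $\mathcal U_X$, and conversely extending each $\gamma_O$ to a cover of the compactification, whereas you phrase the same check in terms of which Cauchy filters are separated or collapsed; both arguments encode the same two inclusions of uniformities.
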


\begin{proof}
(a) Ultrahomogeneity of $X$ yields that the base of the maximal equiuniformity $\mathcal U^{max}_X$ on LOTS $X$ is formed by finite covers 
$$\{Ox: x\in X\}=\{(\gets, y_2),\ (y_1, y_2),\ (y_1, y_4),\ \ldots,\ (y_{2n-3}, y_{2n}),\ (y_{2n-1}, y_{2n}),\ (y_{2n-1}, \to)\},$$
for  $y_1<x_1<y_2<\ldots<y_{2n-1}<x_n<y_{2n}$, $n\in\mathbb N$, $O=\bigcap\limits_{k=1}^n  [x_k, (y_{2k-1}, y_{2k})]$  (see, for instance, item (5) of Remark~\ref{propchain} and~\cite{ChK}). Hence, $\mathcal U^{max}_X$ is a totally bounded equiuniformity.

In any open cover of a connected (one-dimensional) linearly ordered compact space $c_m X$, one can refine a finite cover $\Omega$ from the intervals $I_1=(\gets, b_1), \ldots, (a_m, b_m),\ldots, I_k=(a_k, \to)$, $m=2, \ldots, k-1$, $a_2<b_1< a_3<b_2<\ldots<a_k<b_{k-1}\in X$ (it is enough to refine in a cover a cover from intervals with end-points in $X$ and choose a minimal subcover from the refined cover in the sense that any of its subsystems is not a cover). Then the cover 
$$\Omega\wedge X=\{(\gets, b_1),\ (a_2, b_1),\ (a_2, b_2),\ \ldots,\ (a_{k-1}, b_{k-1}),\ (a_k, b_{k-1}),\ (a_k, \to)\}$$
of $X$ belongs to $\mathcal U^{max}_X$ (if an arbitrary point $x_m$ is selected in the interval $(a_m, b_{m-1})$, $m=2,\ldots, k$, then $\Omega\wedge X=\{Ox: x\in X\}$ where $O=\bigcap\limits_{m=2}^k [x_m, (a_m, b_{m-1})]$). Since any cover of $\mathcal U^{max}_X$ extends to the cover of $c_m X$, $\beta_G X=c_m X$.

\medskip

(b) Ultrahomogeneity of $X$ yields that the base of the maximal equiuniformity $\mathcal U^{max}_X$ on discrete chain $X$ is formed by finite covers
$$\{\st_{\sigma}x\ |\ x\in X\}=\{(\gets, x_1),\ \{x_1\},\ (x_1, x_2),\ \{x_2\},\ \ldots,\ (x_{n-1}, x_n),\ \{x_n\},\ (x_n, \to)\},$$  $x_1<\ldots<x_n$,  $\sigma=\{x_1,\ldots, x_n\}\in\Sigma_X$. Hence, $\mathcal U^{max}_X$ is a totally bounded equiuniformity.

Since the compactification $b_m X$ is zero-dimensional, in its any open cover it is possible to refine a finite cover of pairwise distinct clopen intervals no none is a subset of the other. By introducing the order on the intervals ($(a_1, b_1)< (a_2, b_2)$ if $a_1< a_2$), we obtain a disjoint cover of the clopen intervals, subtracting sequentially from the $i$ interval the union of the previous ones. Let us correct the latter cover by removing endpoints of the form $(x, 0)$ from the intervals (if any), and adding them as single-point clopen intervals to the corrected interval system to obtain an open cover $\Omega$ of the space $b_m X$. Then $\Omega\wedge X$ coincides with the cover from $\mathcal U^{max}_X$. Since any cover of $\mathcal U^{max}_X$ is extended to the cover of  $b_m X$ ($\{\st_{\sigma}x\ |\ x\in X\}$ is a trace of the cover $$\{(\gets, (x_1, 0)),\ ((x_1, -1), (x_1, 1)),\ ((x_1, 0), (x_2, 0)),\ \ldots,\ ((x_{n-1}, 0), (x_n, 0)),\ ((x_n, -1), (x_n, 1)),\ ((x_n, 0), \to)\}{\rm)},$$ 
$b_m X=\beta_G X$ and the action $\theta_{\partial}$ is extended to the action $\tilde\theta_{\partial}: G\times b_m X\to b_m X$ (for  $g\in G$ 
\begin{itemize}
\item[] $\tilde\theta_{\partial} (g, (x, j))=(\theta_{\partial} (g, x), j)$, $x\in X$, $j=-1, 0, 1$;
\item[] $\tilde\theta_{\partial} (g, (A, B))=(\theta_{\partial} (g, A), \theta_{\partial} (g, B))$, $(A, B)\in\Gamma$).
\end{itemize}
It is easy to check that the action  $\tilde\theta_{\partial}$ is correctly defined, continuous, its restriction to $X=X^0$ coincides with $\theta_{\partial}$ and $X^+$, $X^0$, $X^-$,  $\Gamma$ are invariant subsets. 
\end{proof}

$c_m X=X\cup\Gamma$, $\Gamma$ is the set of gaps. The subsets $\inf$, $\sup$, $\Gamma$ and $\Gamma^0=\Gamma\setminus\{\inf, \sup\}$ are invariant subsets of $c_m X$.  

Any other $G$-compactification of $X$ is an image of $c_m X$ under the $G$-map $\varphi$ of compactifications. If one identifies points $\{\inf\}$ and $\{\sup\}$, then the compactification $c X$ is obtained ($\infty$ is the image of $\{\inf\}$ and $\{\sup\}$). If $X$ is a continuously ordered chain ($\Gamma^0=\emptyset$), then $c X=\alpha X=X\cup\{\infty\}$.

If $\xi\in\Gamma^0$ and $\varphi (\xi)=\varphi (\nu)$ ($\xi\sim\nu$), $\nu\ne\xi$, $\nu\in\Gamma^0$. Then the set $\st_{\xi}\nu=\{g\nu\ |\ g\in \st_{\xi}\}$ is dense either in $(\xi, \to)$ (if $\nu>\xi$) or in $(\gets, \xi)$ (if $\nu<\xi$). Since the equivalence class $[\xi]$ of $\xi$ is a closed and invariant subset of $c_m X$, $\sup\in [\xi]$ in the case $\nu>\xi$. Further, $\st_{\sup}\nu$ is dense in $c_m X$ and, hence, $[\nu]=[\xi]=c_m X$. The case $\nu<\xi$, $\nu\in\Gamma^0$ is examined similarly. 

If $\xi\in\Gamma^0$ and $\varphi (\xi)=\varphi (\sup)$, then $\st_{\sup}\xi$ is dense in $c_m X$ and, hence, $[\nu]=[\xi]=c_m X$. The case $\varphi (\xi)=\varphi (\inf)$ is examined similarly. From this it follows. 

\begin{lem}\label{compLOTS}
Let $X$ be an ultrahomogeneous {\rm LOTS}, $G=(\aut X, \tau_p)$.
There are two $G$-compactifications of $X$:  $c_m X>c X$. The map $c_m X\to c X$ of $G$-compactifications is elementary. 
\end{lem}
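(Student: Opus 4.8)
The plan is to build on Proposition~\ref{betaG}(a), which identifies the least linearly ordered compactification $c_m X$ with the maximal $G$-compactification $\beta_G X$. Hence every $G$-compactification $bX$ of $X$ is the image of $c_m X$ under a $G$-map of compactifications $\varphi\colon c_m X\to bX$, and classifying the $G$-compactifications reduces to describing which fibers $\varphi^{-1}(\varphi(p))$ may occur. Since $bX$ is required to be a compactification of $X$, the restriction $\varphi|_X$ must be a homeomorphic embedding; so the first goal is to show that every nontrivial fiber lies inside the remainder $\Gamma=c_m X\setminus X=\{\inf\}\cup\Gamma^0\cup\{\sup\}$.

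First I would argue that no gap is glued to a point of $X$. If $\varphi(\xi)=\varphi(x)$ with $\xi\in\Gamma$ and $x\in X$, choose a net $a_\alpha\in X$ with $a_\alpha\to\xi$ in $c_m X$; this is possible because $X$, being infinite and homogeneous, has neither a least nor a greatest element, so every gap (including $\inf$ and $\sup$) is a limit of points of $X$. Continuity gives $\varphi(a_\alpha)\to\varphi(\xi)=\varphi(x)$, and since $\varphi(x)$ lies in the embedded copy $\varphi(X)$ on which $\varphi|_X$ is a homeomorphism, this forces $a_\alpha\to x$ in $X$, contradicting $a_\alpha\to\xi\ne x$ in the Hausdorff space $c_m X$. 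Combined with the injectivity of $\varphi|_X$, this confines all nontrivial fibers to $\Gamma$.

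Next I would eliminate the interior gaps $\Gamma^0$, which is exactly the content of the density computations recorded just before the statement. If some $\xi\in\Gamma^0$ lay in a nontrivial fiber, then applying the stabilizer $\st_\xi$ to a partner point and using ultrahomogeneity (Remark~\ref{propchain}) shows that the fiber, being closed, contains a half-line $[\xi,\sup]$ or $[\inf,\xi]$, hence an endpoint; that endpoint is $G$-fixed, so the fiber is the preimage of a fixed point and is therefore $G$-invariant; it then contains the full orbit of the interior gap, which is dense by ultrahomogeneity, forcing the fiber to be all of $c_m X$ and $bX$ to be a single point. As $X$ is infinite this is impossible, so $\Gamma^0$ meets no nontrivial fiber. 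The only remaining invariant subset of $\Gamma$ that can form a nontrivial fiber is the two-point set $\{\inf,\sup\}$ of fixed points, and these two points are distinct.

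Finally I would conclude that there are precisely two admissible decompositions: the trivial one, returning $c_m X$, and the one whose unique nontrivial fiber is $\{\inf,\sup\}$, returning $cX$ with $\varphi(\inf)=\varphi(\sup)=\infty$. A short check confirms that $cX$ is a genuine $G$-compactification (the identifying relation is closed and $G$-invariant, $X$ still embeds densely, and $cX<c_m X$ strictly because $\inf\ne\sup$), so the poset of $G$-compactifications of $X$ is exactly $c_m X>cX$. Since the quotient map $c_m X\to cX$ has all point-preimages one-point except $\varphi^{-1}(\infty)=\{\inf,\sup\}$, it is elementary by definition. I expect the main obstacle to be the swelling-of-fibers step for interior gaps; but this is precisely the ultrahomogeneity-driven density argument already carried out in the paragraphs preceding the statement, so here it only needs to be invoked and assembled.
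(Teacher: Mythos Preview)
Your proposal is correct and follows the paper's approach: start from $c_m X=\beta_G X$ (Proposition~\ref{betaG}(a)) and classify the possible $G$-equivariant quotients via the orbit-density computations recorded in the paragraphs immediately preceding the lemma. Your added step~2 (ruling out identifications of gaps with points of $X$ by a net argument) is a correct explicit treatment of a case the paper leaves tacit; the remaining steps and conclusions coincide.
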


$b_m X=X^+\cup X^0\cup X^-\cup\Gamma$. The subsets $\inf$, $\sup$, $X^+$, $X^-$, $\Gamma$ and $\Gamma^0=\Gamma\setminus\{\inf, \sup\}$ are invariant subsets of $b_m X$.  

Any $G$-compactification of $X$ is an image of $b_m X$ under the $G$-map $\varphi$ of compactifications.  If one identifies points $\{\inf\}$ and $\{\sup\}$, then the $G$-compactification $b^{\leftrightarrow}X=b_m X/_{\inf\sim\sup}$ of $X$ is obtained ($\infty$ is the image of $\{\inf\}$ and $\{\sup\}$). 

If $\xi\in\Gamma^0$ and $\varphi (\xi)=\varphi (\nu)$ ($\xi\sim\nu$),  $\nu\ne\xi$, $\nu\in\Gamma^0$, then $\varphi (t)=\varphi (s)$ for $t, s\in b_m X\setminus X$. Indeed, $\st_{\xi}\nu=\{g\nu\ |\ g\in \st_{\xi}\}$ is dense either in $(\xi, \to)\setminus X^0$ (if $\nu>\xi$) or in $(\to, \xi)\setminus X^0$ (if $\nu<\xi$). Since the equivalence class $[\xi]$ of $\xi$ isa  closed and invariant subset of $b_m X$, $\sup\in [\xi]$ in the case $\nu>\xi$. Further, $\st_{\sup}\nu$ is dense in $b_m X\setminus X^0$  and, hence, $[\nu]=[\xi]=b_m X\setminus X^0$ and  $\varphi (t)=\varphi (s)$ for $t, s\in b_m X\setminus X^0$.  The Alexandroff one-point compactification $\alpha X$ is obtained. The case when $\xi>\nu$ is treated analogously.  

If $\xi\in\Gamma^0$ and $\varphi (\xi)=\varphi (t)$, where $t=(x, \pm 1)$, $x\in X$, then $\varphi (t)=\varphi (s)$ for $t, s\in b_m X\setminus X^0$. Indeed, if $\xi<t$, $\sup\in {\rm cl}\ {\rm St}_{\xi}t$ and $\sup\in [\xi]$. The reasoning, as in the previous case, shows that the Alexandroff one-point compactification $\alpha X$ is obtained. The case when $\xi>t$ is treated analogously.  

If $\xi\in\Gamma^0$ and $\varphi (\xi)=\varphi (\inf)$, or $\varphi (\xi)=\varphi (\sup)$, then the reasonings, as in the previous cases, show that the Alexandroff one-point compactification $\alpha X$ is obtained.

If $\varphi (x, 1)=\varphi (y, 1)$, $x<y\in X$, then ultratransitivity of the action yields that $\varphi (x, 1)=\varphi (z, 1)$ for any $z\in X$. Since $\varphi$ is continuous and the set $\{(z, 1)\ |\ z\in X\}$ is dense in $b_m X\setminus X^0$  $\varphi(x, 1)=\varphi (b_m X\setminus X^0)$ and the Alexandroff one-point compactification $\alpha X$ is obtained. The case when $x>y$ is treated analogously.  

If $\varphi (x, 1)=\varphi (\inf)$, or $\varphi (x, 1)=\varphi (\sup)$, then the reasonings, as in the previous cases, show that the Alexandroff one-point compactification $\alpha X$ is obtained. The case  $\varphi (x, -1)=\varphi (\inf)$, or $\varphi (x, -1)=\varphi (\sup)$ is treated similarly.

If $\varphi (x, -1)=\varphi (y, 1)$, $x<y\in X$, then also  $\varphi(x, 1)=\varphi (b_m X\setminus X^0)$ and the Alexandroff one-point compactification $\alpha X$ is obtained. The case when $x>y$ is treated analogously.  

If $\varphi (x, -1)=\varphi (x, 1)$, $x\in X$, then (due to the equivariance) for the pairs $(y, -1)$ and $(y, 1)$ $\varphi (y, -1)=\varphi (y, 1)$, $y\in X$, and one obtains the compactification $b^{\updownarrow}X=b_m X/_{(x, -1)\sim (x, 1),\ x\in X}$.

If  $\varphi (x, -1)=\varphi (x, 1)$, $x\in X$, and  $\varphi (\inf)=\varphi (\sup)$, then one obtains the compactification $b^{+}X=b_m X/_{(x, -1)\sim (x, 1),\ x\in X,\ \inf\sim\sup}$. From these observations, it follows. 

\begin{lem}\label{compd}
Let $X$ be an ultrahomogeneous discrete chain, $G=(\aut X, \tau_{\partial})$.
There are five $G$-compactifications of $X$:  
$$\begin{array}{ccclccc}
& & b^{\leftrightarrow}X=  &  & &  & \\
& \nearrow & =b_m X/_{\inf\sim\sup} &  \searrow    & & & \\
b_m X &   &   &  & b^{+}X=  & \rightarrow  &  \alpha X \\
 & \searrow & b^{\updownarrow}X= &  \nearrow  & =b_m X/_{(x, -1)\sim (x, 1),\ x\in X,\ \inf\sim\sup}  & & \\
& & =b_m X/_{(x, -1)\sim (x, 1),\ x\in X}  &  & &  & 
\end{array}$$

The maps $b_m X\to b^{\leftrightarrow}X$, $b^{\updownarrow}X\to  b^{+}X$ and $b^{+}X\  (b_m X,\ b^{\leftrightarrow}X,\  b^{\updownarrow}X)\to \alpha X$  of compactifications are elementary. 
\end{lem}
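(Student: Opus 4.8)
The plan is to read the classification off the $G$-invariant structure of the maximal $G$-compactification, assembling the case analysis carried out just above the statement. By Proposition~\ref{betaG}(b) the space $b_m X=\beta_G X$ is the maximal $G$-compactification of $X$, so every $G$-compactification $bX$ is of the form $bX=\varphi(b_m X)$ for a $G$-map $\varphi$ of compactifications; the induced equivalence relation $\sim_\varphi$ on $b_m X$ is closed, since $bX$ is Hausdorff, and $G$-invariant, since $\varphi$ is equivariant. As $\varphi\circ b_m$ is the embedding of $X$ into $bX$, the restriction $\varphi|_{X^0}$ is itself an embedding. I would first observe that no point of the remainder $b_m X\setminus X=X^+\cup X^-\cup\Gamma$ can be glued to a point of $X^0$: if $\varphi(\gamma)=\varphi(x,0)$ for some remainder point $\gamma$, then the orbit $\st_x\gamma$ lies in the fibre over $\varphi(x,0)$ and, by ultrahomogeneity, is dense in a one-sided part of the remainder whose closure contains an endpoint $\inf$ or $\sup$; but $\inf$ and $\sup$ are $G$-fixed while $\varphi(x,0)$ is not (the action on $X^0$ is transitive with more than one point), a contradiction. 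Hence $\sim_\varphi$ fixes $X^0$ pointwise and amounts to a closed $G$-invariant equivalence relation on the remainder.

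Next I would dispose of the identifications forcing total collapse. The computations preceding the statement show that if $\varphi$ identifies a point of $\Gamma^0$ with any other remainder point, or two distinct shadow points $(x,\pm 1)$ and $(y,\pm 1)$ with $x\neq y$, or a shadow point or interior gap with $\inf$ or $\sup$, then invariance of $\sim_\varphi$ spreads the identification over a dense subset of the remainder; closedness then collapses the whole of $b_m X\setminus X$ to a single point and yields $bX=\alpha X$. The only surviving nontrivial identifications are the two local ones: $\inf\sim\sup$, and $(x,-1)\sim(x,1)$ for a single $x$, which by equivariance entails $(y,-1)\sim(y,1)$ for every $y\in X$.

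I would then assemble the survivors. The closed $G$-invariant equivalence relations on $b_m X$ fixing $X^0$ are therefore exactly five: the trivial one, giving $b_m X$; $\inf\sim\sup$ alone, giving $b^{\leftrightarrow}X$; $(x,-1)\sim(x,1)\ (x\in X)$ alone, giving $b^{\updownarrow}X$; the two together, giving $b^{+}X$; and the total collapse of $b_m X\setminus X$, giving $\alpha X$. Here I would verify that each nontrivial relation is genuinely closed in $b_m X\times b_m X$ — immediate for $\inf\sim\sup$, and for $(x,-1)\sim(x,1)$ because the two shadows of a point are order-adjacent, so a limit of such pairs either remains a shadow pair or converges into the diagonal at a gap — whence the quotient of the compact Hausdorff space $b_m X$ is compact Hausdorff and a bona fide $G$-compactification. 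Reading the order from inclusion of the corresponding relations produces precisely the displayed Hasse diagram, with $b_m X$ maximal, the incomparable $b^{\leftrightarrow}X$ and $b^{\updownarrow}X$ lying over $b^{+}X$, and $\alpha X$ least.

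Finally, for elementarity I would count nontrivial fibres against the definition from the Introduction. The maps $b_m X\to b^{\leftrightarrow}X$ and $b^{\updownarrow}X\to b^{+}X$ each have the single nontrivial fibre $\{\inf,\sup\}$ and are hence elementary, while each of the maps onto $\alpha X$ (from $b_m X$, $b^{\leftrightarrow}X$, $b^{\updownarrow}X$, or $b^{+}X$) has the single nontrivial fibre formed by the collapsed remainder over $\infty$, and so is elementary as well. I expect the principal obstacle to lie in the completeness of the collapse analysis of the second step: one must be sure the list of collapsing identifications is exhaustive, i.e. that every $G$-invariant closed identification other than the two local ones really does trigger a dense, hence total, collapse of the remainder, and that the preliminary ruling-out of gluings to $X^0$ is airtight; the closedness check for the shadow-pair relation is the only place where a little care with order limits is required.
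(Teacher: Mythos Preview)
Your proposal is correct and follows the paper's approach exactly: the paper's ``proof'' of the lemma \emph{is} the case analysis immediately preceding the statement, and you are organizing and completing it. The extra details you supply (checking that the surviving equivalence relations are closed, and counting fibres for elementarity) are things the paper leaves implicit.

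There is one small gap. Your argument that no remainder point $\gamma$ can be glued to $(x,0)$ relies on the orbit $\st_x\gamma$ being dense in a one-sided part of the remainder. This fails when $\gamma=(x,\pm 1)$: the stabilizer $\st_x$ fixes $(x,\pm 1)$, so the orbit is the single point $\{(x,\pm 1)\}$, not dense anywhere. For this case argue instead as follows: in any Hausdorff compactification of the discrete space $X$ each point of $X$ is isolated (if $U$ is open in $bX$ with $U\cap X=\{x\}$, density of $X$ gives $U\subset\overline{\{x\}}=\{x\}$), so $\varphi^{-1}(x)$ is open in $b_m X$; but $(x,\pm 1)$ is not isolated in $b_m X$ (it has no immediate neighbour on one side, since $X$ is densely ordered), and every open neighbourhood of $(x,\pm 1)$ therefore meets $X^0$ in points other than $(x,0)$, contradicting injectivity of $\varphi|_{X^0}$. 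The paper's own case analysis does not treat gluings between the remainder and $X^0$ at all, so you are not missing anything the paper supplies---you simply need to patch this one subcase of the extra step you (rightly) added.
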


\begin{rem}
{\rm If $X$ is an ultrahomogeneous countable set (linearly isomorphic to $\mathbb Q$), then $G$-compactifications of $X$ are metrizable. 

Indeed, $c_m X=[0, 1]$ and $c X=S^1$. Compact space $b_m X$ has a countable base of intervals with endpoints in $X^0=X$ and all the other $G$-compactifications are its perfect image.}
\end{rem}

\subsection{Ellis compactifications of the automorphism group of ultrahomogeneous LOTS}\label{ultLOT}

\begin{thm}\label{Roelcke precomp4-3-1} Let $X$ be an ultrahomogeneous LOTS, $G=(\aut(X), \tau_p)$. Then 
\begin{itemize}
\item[{\rm (a)}]  The Ellis compactification $e_{c_m X} G$ is the completion of $(G, R_{\Sigma_{c_m X}})$, 
\item[{\rm (b)}]  $G$ is Roelcke precompact,  $b_r G<e_{c_m X} G$, 
\item[{\rm (c)}]  $e_{c_m X} G$ is the set of monotone self-maps $f$ of $c_m X$ {\rm(}$x<y\Longrightarrow f(x)\leq f(y)${\rm)} in the topology of pointwise convergence such that $f(\inf)=\inf$,  $f(\sup)=\sup$. 
\end{itemize}
If $X$ is a continuously ordered chain, then $e_{c_m X} G$  is the completion of $(G, R_{\Sigma_X})$. 
\end{thm}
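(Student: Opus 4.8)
The plan is to read off (a) from the very construction of the Ellis semigroup. By definition $e_{c_m X}G=\cl(\imath_{c_m X}(G))$ is the completion of $\imath_{c_m X}(G)\cong(G,\mathcal V_G)$, where $\mathcal V_G$ is the subspace uniformity inherited from $((c_m X)^{c_m X},\mathcal U_{c_m X}^p)$; hence (a) is exactly the assertion $\mathcal V_G=R_{\Sigma_{c_m X}}$. One inclusion, $\mathcal V_G\subset R_{\Sigma_{c_m X}}$, is Proposition~\ref{lemincl}, so the entire content of (a) is to verify the hypothesis $(\star\star)$ of Proposition~\ref{a2-2} for the compact phase space $c_m X$ with its unique uniformity. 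This verification is the crux of the whole theorem.

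To check $(\star\star)$ I would fix $\sigma=\{\xi_1<\dots<\xi_n\}\in\Sigma_{c_m X}$ and ${\rm U}\in\mathcal U_{c_m X}$, and choose ${\rm V}$ symmetric with $2{\rm V}\subset{\rm U}$, refined along the order/interval structure of $c_m X$ so that the ${\rm V}$-balls of the $\xi_i$ sit compatibly inside the ${\rm U}$-intervals. Given $g,h\in G$ with $(g\xi_i,h\xi_i)\in{\rm V}$ for all $i$, I would set $g'=\phi\circ h$ for a near-identity automorphism $\phi\in\aut(X)$ satisfying $\phi(h\xi_i)=g\xi_i$. Then $g'\xi_i=g\xi_i$, so $g'\in g\st_\sigma$, while $(hz,g'z)=(hz,\phi(hz))\in{\rm U}$ for every $z\in c_m X$ as soon as $\phi$ is uniformly ${\rm U}$-close to $\id$, which by Remark~\ref{Rlkcomp} is precisely $h\in O_{\rm U}g'$. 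The production of $\phi$ is where ultrahomogeneity is essential: $g\xi_i$ and $h\xi_i$ lie in a common small ${\rm U}$-interval, have the same order type, and are of the same kind (a point of $X$, a proper gap, or an endpoint $\inf/\sup$), since $g,h$ extend to order-automorphisms of $c_m X$ preserving $X$, preserving $\Gamma$, and fixing $\inf,\sup$. Using ultrahomogeneity of $X$ together with the density of both $X$ and of the gaps in $c_m X$ (Remark~\ref{propchain}(3)) one builds $\phi$ as a local perturbation supported inside the ${\rm U}$-intervals, hence globally ${\rm U}$-close to the identity. The bookkeeping realizing the finitely many prescribed (possibly gap-valued) displacements by a single such $\phi$ is the main obstacle; the rest is formal.

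For (c) I would argue that membership in $e_{c_m X}G$ is a closed condition and that order-preservation together with $f(\inf)=\inf$ and $f(\sup)=\sup$ pass to pointwise limits of the monotone maps $\imath(g)$, giving $e_{c_m X}G\subseteq\{f\text{ monotone},\ f(\inf)=\inf,\ f(\sup)=\sup\}$. Conversely, for such an $f$ and a basic $\tau_p$-neighbourhood imposing $f\xi_i\in O_i$ for $i\le n$, monotonicity of $f$ lets me select strictly increasing targets $\eta_i\in O_i$ of matching kind, and ultratransitivity of $\aut(X)$ (Remark~\ref{propchain}(2)) with the density of $X$ and of $\Gamma$ then produces $g\in\aut(X)$ with $g\xi_i\in O_i$; thus $f\in\cl(\imath(G))$.

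For (b), the equality $\mathcal V_G=R_{\Sigma_{c_m X}}$ and $L\wedge R\subset R_{\Sigma_{c_m X}}$ give, by Remark~\ref{Rlkcomp}, Roelcke precompactness and $b_r G\le e_{c_m X}G$; moreover Proposition~\ref{autultr}, applied to $X=G/\st_x$ (its maximal equiuniformity is totally bounded by Proposition~\ref{betaG}, and $\st_x$ is neutral with no nontrivial invariant subgroup by effectiveness of the action), yields $e_{c_m X}G=\mathfrak E(b_r G)$, the least Ellis compactification above $b_r G$. By Remark~\ref{compmapsth} strictness $b_r G<e_{c_m X}G$ is then equivalent to $b_r G$ failing to be an Ellis compactification, and I would witness it through (c): two monotone maps that coincide off a single interior jump but take different admissible values there are distinct in $e_{c_m X}G$, yet are identified by the two-sided blurring of $L\wedge R$, so the canonical map $e_{c_m X}G\to b_r G$ is not injective. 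For the final sentence, if $X$ is continuously ordered then $\Gamma^0=\emptyset$, so $c_m X=\{\inf\}\cup X\cup\{\sup\}$ with $\st_{\inf}=\st_{\sup}=G$; adjoining $\inf,\sup$ to any $\sigma$ leaves $\st_\sigma$ unchanged, whence $R_{\Sigma_{c_m X}}=R_{\Sigma_X}$ and $e_{c_m X}G$ is the completion of $(G,R_{\Sigma_X})$.
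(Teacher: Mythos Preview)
Your framework for (a)---reduce to checking $(\star\star)$ of Proposition~\ref{a2-2}, then produce the witness $g'$---matches the paper, but your proposed construction $g'=\phi\circ h$ with $\phi(h\xi_i)=g\xi_i$ differs from the paper's and has a genuine obstacle when some $\xi_i\in\Gamma^0$ is a proper gap. Ultrahomogeneity of $X$ only prescribes automorphisms on finite tuples of \emph{points of $X$}; it gives no control over where a specific gap is sent. Even if $h\xi_i$ and $g\xi_i$ are two gaps lying in the same small ${\rm V}$-interval, nothing guarantees an automorphism of $X$ sending one gap to the other, let alone one supported in that interval and $U$-close to the identity. The paper sidesteps this entirely by building $g'$ piecewise on the intervals $[\inf,x_1],[x_1,x_2],\dots,[x_n,\sup]$: near each $x_i$ it sets $g'=g$ (so $g'(x_i)=g(x_i)$ is automatic, whether $x_i$ is a point or a gap), away from the $x_i$'s it sets $g'=h$, and ultrahomogeneity is invoked only to splice in transition automorphisms $\varphi_k$ between points of $X$. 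This never requires moving a prescribed gap to another prescribed gap.

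For (b), your route to strictness is too vague to stand as a proof. The phrase ``coincide off a single interior jump'' is unclear (do you mean a jump discontinuity of $f$?), and you have not said why two such maps are identified under the projection to $b_rG$; that would require analysing $b_rG$ itself, which you have not done. The paper instead shows directly that $\mathcal V_G\not\subset L\wedge R$: it fixes an entourage ${\rm W}=\{(g,h):(g(x),h(x))\in{\rm U}\}\in\mathcal V_G$ and, for every $O\in N_G(e)$, uses ultrahomogeneity to produce $g\in G$ and $h\in O$ with $(g(x),(gh)(x))\notin{\rm U}$, so that $gh\in OgO$ but $(g,gh)\notin{\rm W}$. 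This is a short explicit computation. Your invocation of Proposition~\ref{autultr} to conclude $e_{c_mX}G=\mathfrak E(b_rG)$ is an extra statement not claimed in the theorem.

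Parts (c) and the final sentence are essentially the same as the paper's.
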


\begin{proof}
(a) By Proposition~\ref{lemincl} $\mathcal V_G (c_m X)\subset R_{\Sigma_{c_m X}}$. To prove the equality $\mathcal V_G  (c_m X)=R_{\Sigma_{c_m X}}$, we use condition $(\star\star)$ from Proposition~\ref{a2-2}.

Let $\sigma=\{x_1, \ldots, x_n\}\in \Sigma_{c_m X}$, $\inf X<x_1<\ldots<x_n<\sup X$, (since $\st_{\inf}=\st_{\sup}=G$); the entourage ${\rm U}$ from the uniformity on $c_m X$ corresponds to the cover 
$$u_O=\{(\gets, y_2),\ (y_1, y_2),\ (y_1, y_4),\ \ldots,\ (y_{2m-3}, y_{2m}),\ (y_{2m-1}, y_{2m}),\ (y_{2m-1}, \to)\},$$
for  $y_1<t_1<y_2<\ldots<y_{2m-1}<t_m<y_{2m}\in X$, $m\in\mathbb N$,  $O=\bigcap\limits_{k=1}^m  [t_k, (y_{2k-1}, y_{2k})]$; and $g, h\in G$ are such that $(g(x), h(x))\in{\rm U}$, $x\in\sigma$. 

The balls $B(x, {\rm U})=\St (x, u_O)$, $x\in c_m X$, satisfy the following properties:
\begin{itemize}
\item if $y\in B(x, {\rm U})$, $y<x$, then exists $z<y$,  $z\in B(x, {\rm U})$ (if $y\in B(x, {\rm U})$, $x<y$, then exists $y<z$,  $z\in B(x, {\rm U})$);
\item if $y\in B(x, {\rm U})$, $y<x$ and $y<z_1<z_2<x$, then  $z_1\in B(z_2, {\rm U})$.
\end{itemize}
The first property follows from the openness of the balls, and the second one is valid since the elements of the cover $u_O$ are intervals. 

Let us construct an automorphism $g'\in g{\rm St}_{\sigma}$ such that $(h(x), g'(x))=(\tilde\theta_p (h, x), \tilde\theta_p (g', x))\in {\rm U}$ for any point $x\in c_m X$ and apply Remark~\ref{Rlkcomp}. 

The construction of the automorphism $g'$ is carried out on the closed intervals $$[\inf, x_1]=\{x\in c_m X\ |\ x\leq x_1\}, [x_1, x_2]=\{x\in c_m X\ |\ x_1\leq x\leq x_2\},\ldots, [x_{n-1}, x_n], [x_n, \sup].$$ 

(i) On $[\inf, x_1]$ in the case of $h(x_1)\geq g(x_1)$. If $(\inf, h(x_1))\in{\rm U}$, then $g'(x)=g(x)$ for $x\in [\inf, x_1]$. 

Otherwise, take $a_1<g(x_1)$, $a_1\in X$, $(a_1, h(x_1))\in{\rm U}$. We assume $x_1^-=h^{-1}(a_1)$ and take $y_1^-\in X$, $x_1^-<y_1^-<x_1$, such that $g(y_1^-)>a_1$. From the ultrahomogeneity of $X$, there exists $\varphi_1\in G$ such that $\varphi_1(x_1^-)=a_1$, $\varphi_1(y_1^-)=g(y_1^-)$. 

In the case of $h(x_1)\leq g(x_1)$.  If $(\inf, g(x_1))\in {\rm U}$, then $g'(x)=g(x)$ for $x\in [\inf, x_1]$. 

Otherwise, take $a_1<h(x_1)$, $a_1\in X$, $(a_1, g(x_1))\in {\rm U}$. We assume $x_1^-=h^{-1}(a_1)$ and take $y_1^-\in X$, $x_1^-<y_1^-<x_1$, such that $g(y_1^-)>a_1$. From the ultrahomogeneity of $X$, there exists $\varphi_1\in G$ such that $\varphi_1(x_1^-)=a_1$, $\varphi_1(y^-_1)=g(y_1^-)$. In both cases, when $g'$ is not yet defined, let us put
$$g'(x)=\left\{\begin{array}{lcl}
h(x) & \mbox{if} & x\in [\inf, x_1^-], \\
\varphi_1(x) & \mbox{if} & x\in [x_1^-, y_1^-].\\
g(x) & \mbox{if} & x\in [y_1^-, x_1].\\
\end{array}\right.$$ 
By the properties of balls $(h(x), g'(x))\in {\rm U}$, $x\in [\inf, x_1]$.

On the interval $[x_n, \sup]$ the construction is similar. 

\medskip

(ii) On $[x_k, x_{k+1}]$, $k=1, \dots, n-1$. The case of $h(x_k)\geq g(k_k)$,  $h(k_{k+1})\geq g(x_{k+1})$. If $(g(x_k), h(x_{k+1}))\in {\rm U}$, then $g'(x)=g(x)$ for $x\in [x_k, x_{k+1}]$.

Otherwise, take $b_k>h(x_k)$, $b_k\in X$, $(b_k, g(x_k))\in {\rm U}$, $a_{k+1}<g(x_{k+1})$, $a_{k+1}\in X$, $(a_{k+1}, h(x_{k+1}))\in {\rm U}$ ($b_k<a_{k+1}$). We assume $x_k^+=h^{-1}(b_k)$, $x_{k+1}^-=h^{-1}(a_{k+1})$, take $y_k^+\in X$, $x_k<y_k^+<x_k^+$, such that $g(y_k^+)< b_k$ and take $y_{k+1}^-\in X$, $x_{k+1}^-<y_{k+1}^-<x_{k+1}$, such that $g(y_{k+1}^-)> a_{k+1}$. From the ultrahomogeneity of $X$, there exists $\varphi_k^-\in G$ such that $\varphi_k^-(y_k^+)=g(y_k^+)$, $\varphi_k^-(x_k^+)=b_k$, there exists $\varphi_k^+\in G$ such that $\varphi_k^+(x_{k+1}^-)=a_{k+1}$, $\varphi_k^+(y_{k+1}^-)=g(y_{k+1}^-)$.

In the case of $h(x_k)\leq g(x_k)$,  $h(x_{k+1})\leq g(x_{k+1})$  the construction of $\varphi_k^-$ and $\varphi_k^+$ is similar. 

In the case of $h(x_k)\geq g(x_k)$,  $h(x_{k+1})\leq g(x_{k+1})$.   If  $(g(x_k), g(x_{k+1}))\in {\rm U}$, then $g'(x)=g(x)$ for $x\in [x_k, x_{k+1}]$.

Otherwise, take $b_k> h(x_k)$, $b_k\in X$, $(b_k, g(x_k))\in {\rm U}$, $a_{k+1}<h(x_{k+1})$, $a_{k+1}\in X$, $(a_{k+1}, g(x_{k+1}))\in {\rm U}$ ($b_k<a_{k+1}$). We assume $x_k^+=h^{-1}(b_k)$, $x_{k+1}^-=h^{-1}(a_{k+1})$, take $y_k^+\in X$, $x_k<y_k^+<x_k^+$, such that $g(y_k^+)< b_k$ and take $y_{k+1}^-\in X$, $x_{k+1}^-<y_{k+1}^-<x_{k+1}$, such that $g(y_{k+1}^-)> a_{k+1}$. From the ultrahomogeneity of $X$, there exists $\varphi_k^-\in G$ such that $\varphi_k^-(y_k^+)=g(y_k^+)$, $\varphi_k^-(x_k^+)=b_k$, there exists $\varphi_k^+\in G$ such that $\varphi_k^+(x_{k+1}^-)=a_{k+1}$, $\varphi_k^+(y_{k+1}^-)=g(y_{k+1}^-)$. 

In the case of $h(x_k)\leq g(x_k)$, $h(x_{k+1})\geq g(x_{k+1})$  the construction of $\varphi_k^-$ and $\varphi_k^+$ is similar. 

In all the cases, when $g'$ is not yet defined, let us put
$$g'(x)=\left\{\begin{array}{lcl}
g(x) & \mbox{if} & x\in [x_k, y_k^+], \\
\varphi_k^-(x) & \mbox{if} & x\in [y_k^+, x_k^+], \\
h(x) & \mbox{if} & x\in [x_k^+, x_{k+1}^-], \\
\varphi_k^+(x) & \mbox{if} & x\in [x_{k+1}^-, y_{k+1}^-],\\
g(x) & \mbox{if} & x\in [y_{k+1}^-, x_{k+1}],\\
\end{array}\right.$$ 
It follows from the construction and the properties of balls that $(h(x), g'(x))\in {\rm U}$ for any point $x\in c_m X$. Hence, $e_{c_m X} G$ is the completion of $(G, R_{\Sigma_{c_m X}})$. 

\medskip

(b) By Remark~\ref{a2-1} $L\wedge R\subset R_{\Sigma_X}$, $R_{\Sigma_X}\subset R_{\Sigma_{c_m X}}=\mathcal V_G (c_m X)$ and $\mathcal V_G (c_m X)$ is totally bounded. Hence, $G$ is Roelcke precompact and $b_r G\leq e_{c_m X} G$.

Take $x\in X$ and entourage ${\rm U}$ from the uniformity on $c_m X$ such that $\exists\ y, z\in X$, $y<z$ and $(y, z)\not\in {\rm U}$. Fix ${\rm W}=\{(h', g')\in G\times G\ |\ (g'(x), h'(x))\in {\rm U}\}\in\mathcal V_G (c_m X)$. Ultrahomogeneity of $X$ yields that $\forall\ O\in N_G (e)$ $\exists\ g\in G$ and $\exists\ h\in O$ such that $g(x)=y$, $x<h(x)$ and $g(h(x))=z$. Therefore, $gh\in OgO$, however, $(g(x), (gh)(x))\not\in {\rm U}$. Hence,  $b_r G<e_{c_m X} G$.

\medskip

(c) Let $f\in e_{c_m X} G$, $x, y\in c_m X$, $x<y$. If $f(x)>f(y)$, then there are nbds $O_{f(x)}$ and $O_{f(y)}$ of $f(x)$ and $f(y)$ respectively, such that $z\in O_{f(x)},\ z'\in O_{f(y)}\Longrightarrow z>z'$ (designation $O_{f(x)}>O_{f(y)}$). Then $W=[x, O_{f(x)}]\cap [y, O_{f(y)}]$ is a nbd of $\imath_{c_m X}(f)$ and  $W\cap \imath_{c_m X} (G)=\emptyset$. Hence, $f\not\in e_{c_m X} G$. 

If $f(\inf)=y\ne \inf$, then for a nbd $O_{y}$ of $y$ such that $\inf\not\in O_{y}$ one has $W=[y, O_{y}]\cap \imath_{c_m X}(G)=\emptyset$ and, hence, $f\not\in e_{c_m X} G$. The condition $f(\sup)=\sup$ is verified similarly.

Let $f\in (c_m X)^{c_m X}$ be monotone, $f(\inf)=\inf$, $f(\sup)=\sup$, and $W=\bigcap\limits_{k=1}^n [x_k, O_{f(x_k)}]$ is its arbitrary nbd. Without loss of generality, one can consider that $x_1=\inf$, $x_2,\ldots, x_{n-1}\in X$, $x_n=\sup$, $x_1<\ldots<x_n$ and $O_{f(x_{k+1})}=O_{f(x_{k})}$ if $f(x_{k+1})=f(x_{k})$, and $O_{f(x_{k+1})}>O_{f(x_{k})}$ (i.e. that $\forall\ z\in O_{f(x_{k+1})}$ and $\forall\ z'\in O_{f(x_{k})}$ $\Longrightarrow$ $z>z'$)  otherwise, $k=1, \ldots, n-1$.

There are points $y_2,\ldots, y_{n-1}\in X$ such that $\inf<y_2<\ldots<y_{n-1}<\sup$ and $y_k\in O_{f(x_{k})}$, $k=2, \ldots, n-1$. Ultrahomogeneity of $X$  yields that the exists $g\in G$ such that $g(x_k)=y_k$, $k=2, \ldots, n-1$. Evidently, $\imath_{c_m X} (g)\in W$. Hence, $f\in e_{c_m X} G$.

\medskip

If $X$ is a continuously ordered chain, then $R_{\Sigma_X}=R_{\Sigma_{c_m X}}$ ($c_m X=X\cup\{\inf, \sup\}$, $\st_{\inf}=\st_{\sup}=G$) and $e_{c_m X} G$  is the completion of $(G, R_{\Sigma_X})$.
\end{proof}

\begin{que}
Is the equality $\mathfrak{E} (b_r G)=e_{c_m X} G$ valid?
\end{que}


For the map  $c_m X\to c X$  of $G$-compactifications of $X$ 
$$I=\{f\in e_{c_m X} G\ |\ f(x)=\inf\,\vee\,\sup,\ x\in c_m X\}$$
is a nonempty compact ideal of $e_{c_m X} G$. Since $e_{c_m X} G$ is the set of monotone self-maps on $c_m X$, it is easy to verify that the condition (EF) from Proposition~\ref{Reesquot} holds. 

\begin{cor}
$$e_{c X} G=e_{c_m X} G/I.$$
\end{cor}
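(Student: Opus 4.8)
The plan is to realize $e_{c X} G$ as a Rees quotient by applying Proposition~\ref{Reesquot} to the elementary $G$-map $\varphi\colon c_m X\to c X$ furnished by Lemma~\ref{compLOTS}. Here $\varphi$ collapses the two-point fibre $\varphi^{-1}(\infty)=\{\inf,\sup\}$ to the single point $\infty$ and is a homeomorphism off this fibre; since $\st_{\inf}=\st_{\sup}=G$, the point $\infty$ is fixed, so $\st_{\infty}=G$, which is exactly the situation required by Proposition~\ref{Reesquot}. With the identifications $X=c_m X$, $Y=c X$ and $y=\infty$, the ideal occurring in that proposition is precisely $I=\{f\in e_{c_m X}G\mid f(x)\in\{\inf,\sup\},\ x\in c_m X\}$, the set named in the statement. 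Thus the corollary should follow once the hypotheses of Proposition~\ref{Reesquot} are checked for this $\varphi$.

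First I would confirm, as already remarked before the corollary, that $I$ is a nonempty compact ideal. Nonemptiness I would get from Theorem~\ref{Roelcke precomp4-3-1}(c): the monotone step map taking the value $\inf$ on an initial segment of $c_m X$ and $\sup$ on the complementary final segment fixes both $\inf$ and $\sup$, hence lies in $e_{c_m X}G$, and visibly takes values in $\{\inf,\sup\}$, so it belongs to $I$. Compactness is immediate, since the condition $f(c_m X)\subset\{\inf,\sup\}$ is closed in the product topology and $e_{c_m X}G$ is compact; that $I$ is an ideal follows from invariance of $\varphi^{-1}(\infty)$ together with the general ideal argument inside Proposition~\ref{Reesquot}.

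The only remaining point is condition (EF), and this is where one must invoke the explicit description of $e_{c_m X}G$ rather than purely formal manipulation. By Theorem~\ref{Roelcke precomp4-3-1}(c) every $f\in e_{c_m X}G$ satisfies $f(\inf)=\inf$ and $f(\sup)=\sup$; consequently, if $f,h\in e_{c_m X}G$ agree on $c_m X\setminus\{\inf,\sup\}$, they also agree at $\inf$ and at $\sup$, whence $f=h$. So (EF) holds automatically, and Proposition~\ref{Reesquot} delivers a topological isomorphism $e_{c_m X}G/I\cong e_{c X}G$, that is, $e_{c X}G=e_{c_m X}G/I$. I expect no serious obstacle here: the genuine content sits in Theorem~\ref{Roelcke precomp4-3-1}(c) and in the Rees-quotient machinery of Proposition~\ref{Reesquot}. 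The one structural fact that makes everything go through is the endpoint-fixing property $f(\inf)=\inf$, $f(\sup)=\sup$ of elements of $e_{c_m X}G$, since it underlies both the nonemptiness of $I$ and the verification of (EF); ensuring that this property is correctly applied is the main (and essentially only) subtlety.
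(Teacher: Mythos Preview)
Your proposal is correct and follows essentially the same approach as the paper: both deduce the corollary directly from Proposition~\ref{Reesquot} applied to the elementary $G$-map $c_m X\to c X$, checking that $I$ is a nonempty compact ideal and that condition (EF) holds via the description in Theorem~\ref{Roelcke precomp4-3-1}(c). Your verification of (EF) through the endpoint-fixing property $f(\inf)=\inf$, $f(\sup)=\sup$ is in fact a bit more explicit than the paper's terse appeal to monotonicity, but the substance is identical.
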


\begin{pro}\label{LOTcomp}
Let $X$ be an ultrahomogeneous LOTS, $G=(\aut(X), \tau_p)$. Then  
\begin{itemize}
\item[{\rm (a)}] The Ellis compactification $e_{c X} G$ is the set of all self-maps $f$ of $c X$ such that  $f(\infty)=\infty$,  the restriction of $f$ to $f^{-1} (c X\setminus\{\infty\})$ is a monotone map of $f^{-1} (c X\setminus\{\infty\})$ to $c X\setminus\{\infty\}$ {\rm(}with the linear order induced on $c X\setminus\{\infty\}$ as a subset of $c_m X${\rm)}, 
\item[{\rm (b)}] $e_{c X} G$ and $b_r G$ are incomparable compactifications of $G$.
\end{itemize}
\end{pro}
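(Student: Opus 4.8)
The plan is to read off (a) from the description of $e_{c_m X} G$ in Theorem~\ref{Roelcke precomp4-3-1}(c) together with the identification $e_{c X} G = e_{c_m X} G/I$ (the Corollary immediately preceding this Proposition), and to prove the incomparability in (b) by pairing a formal argument based on Proposition~\ref{autultr} with an explicit separation of the uniformities $\mathcal V_G$ and $L\wedge R$. For (a), I would use that the elementary $G$-map $\varphi\colon c_m X\to c X$ collapsing $\inf$ and $\sup$ to $\infty$ induces, by Proposition~\ref{mapelliscomp}, a surjective homomorphism $\Phi\colon e_{c_m X} G\to e_{c X} G$ with $e_{c X} G=\Phi(e_{c_m X} G)\subset (c X)^{c X}$. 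Taking $F\in e_{c X} G$ and writing $F=\Phi(f)$ with $f\in e_{c_m X} G$, Theorem~\ref{Roelcke precomp4-3-1}(c) gives that $f$ is monotone with $f(\inf)=\inf$, $f(\sup)=\sup$; hence $F(\infty)=\varphi(f(\inf))=\infty$, and $F(\bar x)=\varphi(f(\bar x))$ for $\bar x\ne\infty$, so $F^{-1}(c X\setminus\{\infty\})=\{\bar x: f(\bar x)\in(\inf,\sup)\}$ is the $f$-preimage of an interval, hence order-convex, and $F=f$ is monotone there. This is the forward inclusion.

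For the converse I would lift a given $F$ with $F(\infty)=\infty$ and monotone restriction to $D:=F^{-1}(c X\setminus\{\infty\})$ to $\tilde f\in (c_m X)^{c_m X}$ by $\tilde f=F$ on $D$, $\tilde f\equiv\inf$ below $D$ and $\tilde f\equiv\sup$ above $D$ (for $D=\emptyset$ any jump map of $I$ works), so that $\tilde f$ is monotone, fixes $\inf,\sup$, lies in $e_{c_m X} G$ by Theorem~\ref{Roelcke precomp4-3-1}(c), and satisfies $\Phi(\tilde f)=F$. I expect the real content here to be the order-convexity of $D$: monotonicity of $F|_D$ alone does not force $D$ to be an interval, so one must first observe that every $F\in e_{c X} G$ does have convex $D$. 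Indeed, if $x<y<z$ with $F(x),F(z)\ne\infty=F(y)$, then the neighbourhood $[x,O_{F(x)}]\cap[y,O_{F(y)}]\cap[z,O_{F(z)}]$ meets no $\imath_{c X}(G)$, since every $g\in G$ is order-preserving on $c X\setminus\{\infty\}$ and fixes $\infty$ (so $gy$ near $\infty$ would drag $gx$ or $gz$ to $\infty$ as well). I would therefore read the monotonicity clause as incorporating this convexity and prove the statement in that form.

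For (b) I first record that $\mathfrak{E}(b_r G)=e_{c_m X} G$: this is Proposition~\ref{autultr}(2) applied with $b X=c_m X$ (legitimate since $X=G/\st_x$ with $\st_x$ neutral and $\mathcal U_X$ totally bounded), whose hypothesis $e_{c_m X} G\ge b_r G$ is Theorem~\ref{Roelcke precomp4-3-1}(b). Since $\Phi$ collapses the nonempty set $I$ of jump maps (distinct maps valued in $\{\inf,\sup\}$), one has $e_{c X} G<e_{c_m X} G$ strictly. Now if $e_{c X} G\ge b_r G$ held, Proposition~\ref{autultr}(2) with $b X=c X$ would give $e_{c X} G=\mathfrak{E}(b_r G)=e_{c_m X} G$, contradicting strictness; hence $e_{c X} G\not\ge b_r G$.

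The opposite inequality $b_r G\not\ge e_{c X} G$ is, I expect, the main obstacle, and for it I would argue directly with uniformities: a map $b_r G\to e_{c X} G$ of compactifications exists iff $\mathcal V_G\subset L\wedge R$, where $\mathcal V_G$ is the subspace uniformity of $G\subset (c X)^{c X}$. I would refute this by producing a $\mathcal V_G$-uniformly continuous function that is not Roelcke uniformly continuous. Fix an interior $x_0\in X$ and $h\in C(c X)$ with $h(a)\ne h(b)$ for two interior points $a<b$, and set $\phi=h\circ\theta_{x_0}$, which is $\mathcal V_G$-uniformly continuous because $\theta_{x_0}\colon G\to c X$ is. Given any $O=O_{\rm U}\in N_G(e)$, choose (by ultrahomogeneity) $o\in O$ with $ox_0>x_0$ and then $g\in G$ with $gx_0=a$, $g(ox_0)=b$; then $go\in OgO$ while $|\phi(g)-\phi(go)|=|h(a)-h(b)|$ is bounded below independently of $O$, so $\phi$ is not $L\wedge R$-uniformly continuous. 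Thus $\mathcal V_G\not\subset L\wedge R$, whence $b_r G\not\ge e_{c X} G$, and together with the previous paragraph this establishes that $e_{c X} G$ and $b_r G$ are incomparable.
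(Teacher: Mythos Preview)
Your treatment of (a) is correct and takes a different, more structural route than the paper: you descend through the Rees quotient $\Phi\colon e_{c_m X}G\to e_{c X}G$ and lift back, whereas the paper argues directly with neighbourhoods in $(c X)^{c X}$. Your approach has the merit of flagging the convexity of $D=f^{-1}(c X\setminus\{\infty\})$, which is genuinely needed for the converse inclusion and which the paper's direct argument glosses over. Your proof that $b_r G\not\ge e_{c X}G$ via the non-Roelcke-uniform function $\phi=h\circ\theta_{x_0}$ is essentially the paper's argument recast in function language.

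There is, however, a real gap in your proof of $e_{c X}G\not\ge b_r G$. You invoke $\mathfrak{E}(b_r G)=e_{c_m X}G$, citing Proposition~\ref{autultr}(2). That proposition requires the subgroup $H=\st_x$ to be \emph{neutral} in $G$; this is established in the paper only for the discrete-chain case (Remark~\ref{propchain}(4)), not for LOTS. Indeed, the equality $\mathfrak{E}(b_r G)=e_{c_m X}G$ in the LOTS setting is left as an explicit open Question immediately after Theorem~\ref{Roelcke precomp4-3-1}, so you cannot use it here. The paper instead proves $L\wedge R\not\subset\mathcal V_G$ directly: given any $O\in N_G(e)$ containing some $[x,(a,b)]$, one exploits the wrap-around in $c X$ to produce $g,h\in G$ with $g(a)$ near $\sup$ and $h(b)$ near $\inf$ (so that $g,h$ are $\mathcal V_G$-close, since their values at the finitely many test points all lie in a small neighbourhood of $\infty$), yet $h\notin OgO$ because any $\xi,\psi\in O$ keep $\xi h(x)<b<g\psi(x)$. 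You need an argument of this kind to close the gap.
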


\begin{proof}
(a) Let $f\in e_{c X} G$, $Y=c X\setminus\{\infty\}$, $x, y\in f^{-1} Y$, $x<y$. If $f(x)>f(y)$, then there are nbds $U_{f(x)}$ and $U_{f(y)}\subset Y$ such that $U_{f(x)}>U_{f(y)}$. Then $W=[x, U_{f(x)}]\cap [y, U_{f(y)}]$ is a nbd of $\imath_{c X} (f)$ and  $W\cap\imath_{c X} (G)=\emptyset$. Hence, $f\not\in e_{c  X} G$.

Let $f\in e_{c X} G$. If $f(\infty)\in Y$, then take a nbd $U$ of $f(y)$ such that $\infty\not\in U$. Then $W=[\infty, U]$ is a nbd of $\imath_{c X} (f)$ and $W\cap\imath_{c X} (G)=\emptyset$. Hence, $f\not\in e_{c X} G$.

Let $f\in (c X)^{c X}$ satisfies the conditions of (a) and $W=\bigcap\limits_{k=1}^n [x_k, U_{f(x_k)}]$ is its arbitrary nbd. Without loss of generality, one can consider that $x_1,\ldots, x_n\in X\subset Y$, $x_1<\ldots<x_n$; $U_{f(x_{k})}=U_{f(x_{k+1})}\subset Y$ if $f(x_{k})=f(x_{k+1})\in Y$;  $U_{f(x_{k})},\ U_{f(x_{k+1})}\subset Y$, $U_{f(x_{k})}<U_{f(x_{k+1})}$ if $f(x_{k})<f(x_{k+1})\in Y$; and if $f(x_{j_1})=\ldots=f(x_{j_m})=\infty$, then 
$U_{\infty}$ is the nbd of $\infty$ which is disjoint from other nbds $U_{f(x_{k})}$ when $f(x_k)\in Y$. 

There are points $y_1,\ldots, y_n\in X$ such that $y_1<\ldots<y_n$ and $y_k\in U_{f(x_{k})}$, $k=1, \ldots, n$. Ultrahomogeneity of $X$ yields that the exists $g\in G$ such that $g(x_k)=y_k$, $k=1, \ldots, n$. Evidently, $\imath_{c X} (g)\in W$. Hence,   $f\in e_{c X} G$. 

(b) Take $x\in X$ and entourage ${\rm U}$ from the uniformity on $cX$ such that $\exists\ y, z\in X$, $y<z$, $(y, z)\not\in {\rm U}$. Fix ${\rm W}=\{(h', g')\in G\times G\ |\ (g'(x), h'(x))\in {\rm U}\}\in\mathcal V_G (c X)$. Ultrahomogeneity of $X$ yields that $\forall\ O\in N_G (e)$ $\exists\ g\in G$ and $\exists\ h\in O$ such that $x<h(x)$, $g(x)=y$, $g(h(x))=z$. Therefore, $gh\in OgO$, however, $(g(x), (gh)(x))\not\in {\rm U}$. Hence, $\mathcal V_G (c X)\not\subset L\wedge R$ and $b_r G\ngeq e_{c X} G$.

Let $O\in  N_G (e)$ be such that $O\subset [x, (a, b)]$, $O=O^{-1}$, where $\inf<a<x<b<\sup$, $a, x, b\in X$. An arbitrary entourage from  $\mathcal V_G (c X)$ is defined by an entourage ${\rm U}$ from the uniformity on $cX$ and a finite number of points $x_1<\ldots<x_n$ in $c X\setminus\{\infty\}$. Then for the map $\varphi: c_m X\to c X$ of compactifications $\varphi^{-1} (B(\infty, {\rm U}))$ may be assumed to consist of two disjoint intervals $\inf\in V_{\inf}$ and $\sup\in V_{\sup}$ in $c_m X$. 

Take $g\in G$ such that $g (a)\in V_{\sup}$, $g(a)>b$, and $h\in G$ such that $h (b)\in V_{\inf}$, $h(b)<a$, and $(g, h)\in {\rm U}$. The last condition is realized since $g (\min\{a, x_1\})\in V_{\sup}$ and $h (\max\{b, x_n\})\in V_{\inf}$ is a sufficient demand, which doesn't contradict the initial limitations. 

If $h\in OgO$, then $\xi h=g\psi$ for some $\xi, \psi\in O$. However, $h(x)<a$, $(\xi h)(x)< b$, $\psi (x)>a$, $(g \psi) (x)>b$ and, hence, $\xi h\ne g\psi$. The obtained contradiction shows that $\mathcal V_G (c X)\not\supset L\wedge R$ and $b_r G\nleq e_{c X} G$.
\end{proof}

Since $e_{c_m X} G>b_r G$ and $e_{c X} G$ and $b_r G$ are incomparable compactifications of $G$, from~\cite[Remark 3.14]{KozlovLeiderman} one has. 

\begin{cor}\label{LOTSincom}
$e_{c_m X} G$  and $e_{c X} G$ are not sm-compactifications of $G$.
\end{cor}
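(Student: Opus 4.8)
The plan is to reduce both assertions to a single structural bound on sm-compactifications, namely that the Roelcke compactification dominates every sm-compactification of $G$. This is exactly the content of~\cite[Remark 3.14]{KozlovLeiderman} (and is the same principle invoked in the proof of Theorem~\ref{Roelcke precomp4-1}(d)): if $bG$ is an sm-compactification of a topological group $G$, then $bG\leq b_r G$. With this domination principle in hand, the corollary becomes a purely formal deduction, since neither $e_{c_m X} G$ nor $e_{c X} G$ lies below $b_r G$, as the order relations already established show.

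Concretely, for $e_{c_m X} G$ I would argue by contradiction. Suppose $e_{c_m X} G$ were an sm-compactification of $G$. Then the domination principle forces $e_{c_m X} G\leq b_r G$. But Theorem~\ref{Roelcke precomp4-3-1}(b) establishes the strict inequality $b_r G< e_{c_m X} G$, so in particular $e_{c_m X} G\nleq b_r G$, a contradiction. Hence $e_{c_m X} G$ is not an sm-compactification. The argument for $e_{c X} G$ is identical in spirit: if $e_{c X} G$ were an sm-compactification, then again $e_{c X} G\leq b_r G$; however, Proposition~\ref{LOTcomp}(b) shows that $e_{c X} G$ and $b_r G$ are incomparable, so $e_{c X} G\nleq b_r G$, and $e_{c X} G$ is not an sm-compactification either.

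There is no genuine obstacle remaining in the corollary itself: all the substantive work has already been carried out in Theorem~\ref{Roelcke precomp4-3-1}(b), where the comparison of the subspace uniformity $\mathcal V_G=R_{\Sigma_{c_m X}}$ with the Roelcke uniformity $L\wedge R$ yields the strict inequality $b_r G< e_{c_m X} G$, and in Proposition~\ref{LOTcomp}(b), where the two mutually failing inclusions $\mathcal V_G\not\subset L\wedge R$ and $\mathcal V_G\not\supset L\wedge R$ give the incomparability of $e_{c X} G$ with $b_r G$. The only point to verify here is that each of these order-theoretic conclusions is incompatible with lying below $b_r G$, which is immediate from the definitions of the order on compactifications and of $\nleq$.
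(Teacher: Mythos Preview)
Your proof is correct and follows exactly the paper's own argument: both invoke \cite[Remark 3.14]{KozlovLeiderman} (every sm-compactification is $\leq b_r G$) together with Theorem~\ref{Roelcke precomp4-3-1}(b) and Proposition~\ref{LOTcomp}(b) to conclude that neither $e_{c_m X}G$ nor $e_{c X}G$ can lie below $b_r G$.
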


\begin{rem}\label{Helly}
{\rm If $c_m X=[0, 1]$, then $e_{c_m X} G$ is a retract of the Helly space~\cite[Exercise 3.2.E]{Engelking}. Indeed, the restriction of the projection $[0, 1]^{[0, 1]}\to [0, 1]^{(0, 1)}$ to the Helly space is the required rectraction. Therefore, $e_{c_m X} G$ is a first-countable, separable non-metrizable compact space (assuming that $e_{c_m X} G$ is metrizable, the diagonal product of the restrictions of the projections $[0, 1]^{[0, 1]}\to [0, 1]^{(0, 1)}$,  $[0, 1]^{[0, 1]}\to [0, 1]^{\{0, 1\}}$ to the Helly space will be embedding of the Helly space into a metrizable space, which is a contradiction with non-metrizability of the Helly space).}
\end{rem}

\begin{que} 
Does there exist sm-compactification of $\aut (X)$, where $X$ is an ultrahomogeneous LOTS?
\end{que}

\subsection{Ellis compactifications of the automorphism group of ultrahomogeneous discrete chain}\label{ultLOTa}

\begin{thm}\label{Roelcke precomp4-2-1} Let $X$ be an ultrahomogeneous discrete chain, $G=(\aut(X), \tau_{\partial})$. Then 
\begin{itemize}
\item[{\rm (a)}]  Ellis compactification $e_{b_m X} G$ is the completion of $(G, R_{\Sigma_{b_m X}})$, 
\item[{\rm (b)}]  $G$ is Roelcke precompact, $b_r G\leq e_{b_m X} G$ and ${\mathfrak E}(b_r G)=e_{b_m X} G$ {\rm(}see {\rm Remark~\ref{compmapsth})}, 
\item[{\rm (c)}]  $e_{b_m X} G$ is the set of self-maps $f$ of $b_m X$  in the topology of pointwise convergence such that  
\begin{itemize}
\item[{\rm (i)}] $f$ is monotone {\rm (}if $x<y$, then $f(x)\leq f(y)${\rm)}, 
\item[{\rm (ii)}] $f(x)\not\in X$ if $x\in b_m X\setminus X$, 
\item[{\rm (iii)}] if $f(x)=f(y)$,  $x\ne y$, then $f(x)\in b_m X\setminus X$, 
\item[{\rm (iv)}] either $f((x, -1))=f((x, 0))=f((x, 1))\not\in X$, or $f((x, -1))=(y, -1)$, $f((x, 0))=(y, 0)$,  $f((x, 1))=(y, 1)$  and 
\item[{\rm (v)}] $f(\inf)=\inf$,  $f(\sup)=\sup$, 
\end{itemize}
\item[{\rm (d)}] $b_r G$ is the set of maps $f$ of $X$ to $b_m X$  in the topology of pointwise convergence such that  {\rm (i')} $f$ is monotone,  {\rm (ii')} if $f(x)=f(y)$,  $x\ne y$, then $f(x)\in b_m X\setminus X$. 
\end{itemize}
If $X$ is a continuously ordered chain, then $b_r G=e_{b_m X} G$. 
\end{thm}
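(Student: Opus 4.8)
The plan is to run the same program as in Theorems~\ref{Roelcke precomp4-1} and~\ref{Roelcke precomp4-3-1}, now over the zero-dimensional compactification $b_m X=\beta_G X$ produced by Proposition~\ref{betaG}(b), whose uniform covers consist of finitely many pairwise disjoint clopen intervals. For (a), Proposition~\ref{lemincl} already gives $\mathcal V_G\subset R_{\Sigma_{b_m X}}$, so it remains to verify condition $(\star\star)$ of Proposition~\ref{a2-2}. Given $\sigma=\{\xi_1<\dots<\xi_n\}\in\Sigma_{b_m X}$ (one may assume $\xi_i\ne\inf,\sup$, as $\st_{\inf}=\st_{\sup}=G$), a clopen-interval cover refining the prescribed one, and $g,h\in G$ agreeing on $\sigma$ modulo the finer cover, I would build $g'\in g\st_\sigma$ with $h\in\st_{\sigma'}g'$ by splicing automorphisms interval-by-interval along $[\inf,\xi_1],[\xi_1,\xi_2],\dots,[\xi_n,\sup]$, exactly as in Theorem~\ref{Roelcke precomp4-3-1}(a). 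Discreteness simplifies matters: on each interval ultrahomogeneity lets one match the finitely many relevant values of $h$ and $g$ \emph{exactly} at chosen points and interpolate by an automorphism, so $g'$ coincides with $h$ up to the clopen cover on the whole of $b_m X$. This yields $\mathcal V_G=R_{\Sigma_{b_m X}}$, i.e.\ $e_{b_m X}G$ is the completion of $(G,R_{\Sigma_{b_m X}})$.

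Part (b) is then formal. By Remark~\ref{a2-1} (discrete case) $L\wedge R=R_{\Sigma_X}\subset R_{\Sigma_{b_m X}}=\mathcal V_G$, which is totally bounded, so $G$ is Roelcke precompact and $b_r G\le e_{b_m X}G$. Since $X=G/\st_x$ with $\st_x$ neutral and of trivial kernel by effectiveness (Remark~\ref{propchain}(4)) and $\mathcal U_X$ totally bounded, Proposition~\ref{autultr}(2) applied to $bX=b_m X=\beta_G X$ gives $\mathfrak E(b_r G)=\mathfrak E_\theta(b_m X)=e_{b_m X}G$.

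For the descriptions (c) and (d) I would argue through a net $g_\alpha\to f$ together with the extension formula $\tilde\theta_\partial(g,(x,j))=(gx,j)$, $\tilde\theta_\partial(g,(A,B))=(gA,gB)$ from Proposition~\ref{betaG}(b). Monotonicity (i) and the boundary values (v) are the separation arguments of Theorem~\ref{Roelcke precomp4-3-1}(c). As each $(x,0)$ is isolated in $b_m X$ and automorphisms preserve the layers $X^-,X^0,X^+,\Gamma$, the clopen neighbourhood $[x,\{(c,0)\}]$ meets $\imath(G)$ only when $x\in X^0$, giving (ii), and a two-point injectivity argument gives (iii). For (iv): if $f((x,0))=(y,0)\in X^0$ then isolatedness forces $g_\alpha(x)=y$ eventually, whence $f((x,\pm1))=(y,\pm1)$; if $f((x,0))\notin X^0$ then the three consecutive points $(g_\alpha(x),-1)<(g_\alpha(x),0)<(g_\alpha(x),1)$ are squeezed to the common limit $f((x,0))$, so $f((x,-1))=f((x,0))=f((x,1))$. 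The converse inclusions in (c) and (d) follow by approximating an arbitrary basic neighbourhood via ultratransitivity of $\aut(X)$ on $X$ (Remark~\ref{denseultr}), selecting order-compatible target points in the given clopen neighbourhoods; conditions (i)--(v) (resp.\ (i$'$)--(ii$'$)) are precisely what make such a choice possible. For (d) one identifies $b_r G$ with the closure of $\{(gx)_{x\in X}\}$ in $(b_m X)^X$, i.e.\ with the restrictions $f\mapsto f|_X$, since $R_{\Sigma_X}$ is initial with respect to evaluations at points of $X$.

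The main obstacle is the splicing construction in (a): keeping the spliced map an automorphism while controlling it simultaneously on all of $b_m X$ rather than merely on $\sigma$, and handling the mixed cases $h(\xi_k)\ge g(\xi_k)$ versus $h(\xi_k)\le g(\xi_k)$ on each interval, as in Theorem~\ref{Roelcke precomp4-3-1}(a). Finally, if $X$ is continuously ordered then $\Gamma^0=\emptyset$, so $b_m X=\{\inf\}\cup X^+\cup X\cup X^-\cup\{\sup\}$; since $\st_{(x,-1)}=\st_{(x,0)}=\st_{(x,1)}=\st_x$ and $\st_{\inf}=\st_{\sup}=G$, the families of stabilizers of finite subsets of $b_m X$ and of $X$ coincide, hence $R_{\Sigma_{b_m X}}=R_{\Sigma_X}=L\wedge R$ and therefore $e_{b_m X}G=b_r G$.
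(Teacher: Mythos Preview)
Your plan is correct and follows the paper's approach throughout: parts (a), (b), (d) and the continuously ordered case are argued exactly as you describe, including the interval-by-interval splicing for $(\star\star)$ and the identification of $b_r G$ with the closure of $G$ in $(b_m X)^X$. The only minor variation is in (c)(iv): you argue via a net $g_\alpha\to f$ and the squeezing of the consecutive triple $(g_\alpha(x),-1),(g_\alpha(x),0),(g_\alpha(x),1)$, whereas the paper handles (iv) by yet another separation argument (inserting a point $z$ between two unequal values and exhibiting a neighbourhood missing $\imath_{b_m X}(G)$); both are valid and the rest of (c), including the converse inclusion via ultrahomogeneity, is as you outline.
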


\begin{proof}  
(a) By Proposition~\ref{lemincl} $\mathcal V_G(b_m X)\subset R_{\Sigma_{b_m X}}$. To prove the equality $\mathcal V_G(b_m X)=R_{\Sigma_{b_m X}}$, we use condition $(\star\star)$ from Proposition~\ref{a2-2}.

For the action $\theta_{\partial}: G\times X\to X$ let $\tilde\theta_{\partial}: G\times b_m X\to b_m X$ be its extension. Since $\st_{(x, j)}$, $j=-1, 0, 1$, for the action $\tilde\theta_{\partial}$ coincides with $\st_x$ for the action $\theta_{\partial}$ and $\st_{\inf}=\st_{\sup}=G$, one can take arbitrary $\sigma=\{x_k\ |\ k=1,\ldots, n\}\in\Sigma_{b_m X}$, $x_1<x_2<\ldots< x_n$, such that $\sigma=\sigma_X\cup\sigma_{\Gamma}$, $\sigma_X\in\Sigma_{X^0}(=\Sigma_{X})$ $\sigma_{\Gamma}\in\Sigma_{\Gamma}\setminus\{\inf, \sup\}$. Let ${\rm U}={\rm U}\circ {\rm U}$ be the entourage from uniformity on $b_m X$ which corresponds to the uniform cover 
$$\{(\gets, (y_1, 0)),\ \{(y_1, 0)\},\ ((y_1, 0), (y_2, 0)),\ \{(y_2, 0)\},\ \ldots,\ ((y_{m-1}, 0), (y_m, 0)),\ \{(y_m, 0)\},\ ((y_m, 0), \to)\},$$
$\sigma'=\{y_k=(y_k, 0)\ |\ k=1,\ldots, m\}\in\Sigma_X$ (see the proof of Proposition~\ref{betaG}). 

Therefore, the condition $(\star\star)$ in Proposition~\ref{a2-2} can be reformulated in the following form 
$$\begin{array}{c}
\mbox{\rm if for}\ \sigma=\sigma_X\cup\sigma_{\Gamma}\in\Sigma_{b_m X},\ \sigma_X\subset\sigma'\in\Sigma_{X},\ \sigma_{\Gamma}\in\Sigma_{\Gamma}\setminus\{\inf, \sup\}\ \mbox{and}\ g, h\in G, \\
\mbox{the inclusion}\ (h(x), g(x))=(\tilde\theta_{\partial}(h, x), \tilde\theta_{\partial}(g, x))\in {\rm U}\ \mbox{\rm holds for}\ x\in\sigma, \\
\mbox{\rm then there exists}\ \ g'\in g\st_{\sigma}\ \ \mbox{\rm such that}\ (h(x), g'(x))=(\tilde\theta_{\partial}(h, x), \tilde\theta_{\partial}(g', x))\in {\rm U}\  \mbox{\rm holds for}\ x\in b_m X.\\ 
\end{array}$$

The construction of the automorphism $g'$ is carried out on the closed intervals 
$$[\inf, x_1], [x_1, x_2],\ldots, [x_{n-1}, x_n], [x_n, \sup].$$  

(I) The map $g'$ on the interval $[\inf, x_1]$. If $(\inf, h(x_1))\in {\rm U}$ (then $(\inf, g(x_1))\in {\rm U}$) or $h(x_1)=\inf \sigma'$ (then $g(x_1)=h(x_1)$, $x_1=(x_1, 0)$, $g((x_1, -1))=h((x_1, -1))$ and $(\inf, h((x_1, -1)))\in {\rm U}$), then 
$$g'(x)=g(x)\ \mbox{for}\ x\in [\inf, x_1].$$

Otherwise, in the case $g(x_1)\leq h(x_1)$. If $g(x_1)\not\in\sigma'$, then take  $a_1<g(x_1)$ such that $a_1\in X^0$,  $(a_1, h(x_1))\in {\rm U}$  (and hence, $(a_1, g(x_1))\in {\rm U}$). If $g(x_1)\in\sigma'$, then $x_1=(x_1, 0)$, $g(x_1)=h(x_1)$ and $g((x_1, -1))=h((x_1, -1))$. Take $a_1<g((x_1, -1))$ such that $a_1\in X^0$,  $(a_1, h((x_1, -1)))\in {\rm U}$   (and hence,  $(a_1, g((x_1, -1)))\in {\rm U}$). Put $t=x_1$ if $g(x_1)\not\in\sigma'$ and $t=(x_1, -1)$ if $g(x_1)\in\sigma'$.

We assume that $x_1^-=h^{-1}(a_1)$ and take $x_1^-<z_1^-<t$, $z_1^-\in X^0$, $a_1<g(z_1^-)$ (using continuity of $g$ and the inequality $a_1<g(t)$). Since $X$ is ultrahomogeneous, there exists $\varphi_1\in G$ such that $\varphi_1(x_1^-)=a_1=h(x_1^-)<\varphi_1(z_1^-)=g(z_1^-)$.

In the case $g(x_1)>h(x_1)$ take  $a_1<h(x_1)$ such that $a_1\in X^0$,  $(a_1, g(x_1))\in {\rm U}$  (and hence, $(a_1, h(x_1))\in {\rm U}$). We assume that $x_1^-=h^{-1}(a_1)$ and take $x_1^-<z_1^-<y_1$, $z_1^-\in X^0$, $a_1<g(z_1^-)$ (using continuity of $g$ and the inequality $a_1<g(y_1)$). Since $X$ is ultrahomogeneous, there exists $\varphi_1\in G$ such that $\varphi_1(x_1^-)=a_1=h(x_1^-)<\varphi_1(z_1^-)=g(z_1^-)$. Put 
$$g'(x)=\left\{\begin{array}{lcl}
h(x) & \mbox{if} & x\in [\inf, x_1^-], \\
\varphi_1(x) & \mbox{if} & x\in [x_1^-, z_1^-],\\
g(x) & \mbox{if} & x\in [z_1^-, x_1].\\
\end{array}\right.$$

On the interval $[x_n, \sup]$ the construction is similar. 

\medskip

(II)  The map $g'$ on on the interval $[x_k, x_{k+1}]$, $k=1, \dots, n-1$. If 
$$(h(x_k), h(x_{k+1}))\in {\rm U},\ \mbox{or}$$  
$h(x_k)\in\sigma'$, $h(x_{k+1})\in\sigma'$ (then $g(x_k)\in\sigma'$, $g(x_{k+1})\in\sigma'$, $x_k=(x_k, 0)$, $x_{k+1}=(x_{k+1}, 0)$, $h((x_k, 1))=g((x_k, 1))$, $h((x_{k+1}, -1))=g((x_{k+1}, -1))$) and $(h((x_k, 1)), h((x_{k+1}, -1)))\in {\rm U}$, or 

\noindent $h(x_k)\in\sigma'$, $h(x_{k+1})\not\in\sigma'$ and $(h((x_k, 1)), h(x_{k+1}))\in {\rm U}$, or 

\noindent $h(x_k)\not\in\sigma'$, $h(x_{k+1})\in\sigma'$ and $(h( x_k), h((x_{k+1}, -1)))\in {\rm U}$, then
$$g'(x)=g(x)\ \mbox{for}\ x\in [x_k, x_{k+1}].$$  

Otherwise, let us examine the case $h(x_k)\geq g(x_k)$, $h(x_{k+1})\geq g(x_{k+1})$. 

(a) If $h(x_k), h(x_{k+1})\not\in\sigma'$ (and, hence, $g(x_k), g(x_{k+1})\not\in\sigma'$), then take $b_k>h(x_k)$, $b_k\in X^0$, $(b_k, g(x_k))\in {\rm U}$ and $a_{k+1}<g(x_{k+1})$, $a_{k+1}\in X^0$, $(a_{k+1}, h(x_{k+1}))\in {\rm U}$. $b_k<a_{k+1}$.  Put $t=x_k$ and $\tau=x_{k+1}$. 

(b) If $h(x_k)$, $h(x_{k+1})\in\sigma'$,  then take $b_k>h((x_k, 1))$, $b_k\in X^0$, $(b_k, h((x_k, 1)))\in {\rm U}$ and $a_{k+1}<g((x_{k+1}, -1))$, $a_{k+1}\in X^0$, $(a_{k+1}, g((x_{k+1}, -1)))\in {\rm U}$. $b_k<a_{k+1}$.  Put $t=(x_k, 1)$ and $\tau=(x_{k+1}, -1)$. 

(c) If $h(x_k)\in\sigma'$, $h(x_{k+1})\not\in\sigma'$, then take $b_k>h((x_k, 1))$, $b_k\in X^0$, $(b_k, h((x_k, 1)))\in {\rm U}$ and $a_{k+1}<g(x_{k+1})$, $a_{k+1}\in X^0$, $(a_{k+1}, h(x_{k+1}))\in {\rm U}$. $b_k<a_{k+1}$.  Put $t=(x_k, 1)$ and $\tau=x_{k+1}$. 

(d) If $h(x_k)\not\in\sigma'$, $h(x_{k+1})\in\sigma'$, take $b_k>h(x_k)$, $b_k\in X^0$, $(b_k, g(x_k))\in {\rm U}$ and $a_{k+1}<g((x_{k+1}, -1))$, $a_{k+1}\in X^0$, $(a_{k+1}, g((x_{k+1}, -1)))\in {\rm U}$. $b_k<a_{k+1}$.  Put $t=x_k$ and $\tau=(x_{k+1}, -1)$. 

We assume that $x_k^+=h^{-1}(b_k)$, $x_{k+1}^-=h^{-1}(a_{k+1})$. From the continuity of $g$ and the inequality $g(t)<b_k$ there exists $t<z^+_k<x_k^+$, $z^+_k\in X^0$, such that $g(z_k^+)<b_k$, and from the inequality $a_{k+1}<g(\tau)$ there exists $x_{k+1}^-<z^-_{k+1}<\tau$, $z^-_{k+1}\in X^0$, such that $a_{k+1}<g(z_{k+1}^-)$. Since $X$ is ultrahomogeneous, there exist $\varphi_k^-\in G$ such that $\varphi_k^-(z^+_k)=g(z_k^+)<\varphi_k^-(x_k^+)=b_k=h(x_k^+)$ and 
$\varphi_k^+\in G$ such that $\varphi_k^+(x_{k+1}^-)=a_{k+1}=h(x_{k+1}^-)<\varphi_k^+(z^-_{k+1})=g(z_{k+1}^-)$. 
Put
$$g'(x)=\left\{\begin{array}{lcl}
g( x) & \mbox{if} & x\in [x_k, z_k^+], \\
\varphi_k^-(x) & \mbox{if} & x\in [z_k^+, y_k^+], \\
h(x) & \mbox{if} & x\in [y_k^+, y_{k+1}^-], \\
\varphi_k^+(x) & \mbox{if} & x\in [x_{k+1}^-, z_{k+1}^-], \\
g(x) & \mbox{if} & x\in [z_{k+1}^-, x_{k+1}]. \\
\end{array}\right.$$ 

In the cases $h(x_k)\leq g(x_k)$,  $h(x_{k+1})\leq g(x_{k+1})$;  $h(x_k)\geq g(x_k)$,  $h(x_{k+1})\leq g(x_{k+1})$; and $h(x_k)\leq g(x_k)$, $h(x_{k+1})\geq g(x_{k+1})$ the considerations are similar. 

\medskip

It follows from the construction that $g'\in G$, $g'\in g\st_{\sigma}$ and $(h(x), g'(x))\in {\rm U}$ for any point $x\in b_m X$. Hence, $R_{\Sigma_{b_m X}}=\mathcal V_G (b_m X)$ and $e_{b_m X} G$ is the completion of $(G, R_{\Sigma_{b_m X}})$.

\medskip

(b) Since $R_{\Sigma_X}\subset R_{\Sigma_{b_m X}}=\mathcal V_G (b_m X)$ and  $R_{\Sigma_X}=L\wedge R$ by Remark~\ref{a2-1},  $L\wedge R\subset\mathcal V_G (b_m X)$. Hence, $G$ is Roelcke precompact and $b_r G\leq e_{b_m X} G$. From item (4) of Remark~\ref{propchain} and item (2) of Proposition~\ref{autultr} it follows ${\mathfrak E}(b_r G)=e_{b_m X} G$.

\medskip

(c) Fulfillment of conditions (i)--(v) for $f\in e_{b_m X} G$. 

(i) Let $x,\ y\in b_m X$, $x<y$ and $f(x)>f(y)$. Take disjoint nbds $O_{f(x)}$, $O_{f(y)}$ of $f(x)$ and $f(y)$ respectively, which are intervals. Then $O_f=[x,  O_{f(x)}]\cap [y,  O_{f(y)}]$ is a nbd of $f$, but $\imath_{b_m X} (G)\cap O_f=\emptyset$. Hence, $f\not\in\cl\,\imath_{b_m X} (G)=e_{b_m X} G$. 

(ii) Let $x\in b_m X\setminus X$ and $f(x)\in X$. Then  $O_f=[x, \{f(x)\}]$ is a nbd of $f$,  but $\imath_{b_m X} (G)\cap O_f=\emptyset$. Hence, $f\not\in e_{b_m X} G$. 

(iii) Let $f(x)=f(y)\in X$, $x,\ y\in b_m X$,  $x\ne y$. Then  $O_f=[x,  \{f(x)\}]\cap [y,  \{f(y)\}]$ is a nbd of $f$, but $\imath_{b_m X} (G)\cap O_f=\emptyset$.  Hence, $f\not\in e_{b_m X} G$.

(iv) If $f((x, -1))=f((x, 0))<f((x, 1))$, then $f((x, 0))$, $f((x, 1))\in b_m X\setminus X$ by (ii) and there exists $z\in X^0$ such that $f((x, 0))< z<f((x, 1))$. For the disjoint nbds $U=(\gets, z)$ and $V=(z, \to)$ of $f((x, 0))$ and $f((x, 1))$ respectively,  $O_f=[(x, 0),  U]\cap [(x, 1),  V]$ is a nbd of $f$, but $\imath_{b_m X} (G)\cap O_f=\emptyset$. 

The same argument works in the case $f((x, -1))<f((x, 0))=f((x, 1))$. Therefore, either $f((x, -1))=f((x, 0))=f((x, 1))\not\in X$ or $f((x, -1))<f((x, 0))<f((x, 1))$ and  there is no $z$ (by the above argument) such that $f((x, -1))<z<f((x, 0))$ or $f((x, 0)<z<f((x, 1))$.  Hence,  $f((x, -1))=(y, -1)$, $f((x, 0))=(y, 0)$,  $f((x, 1))=(y, 1)$. 

(v) Let $f(\inf)=x\ne\inf$. Take nbd $O_x$ of $x$, $\inf\not\in O_x$. Then $O_f=[\inf, O_x]$ is a nbd of $f$,  but  $\imath_{b_m X} (G)\cap O_f=\emptyset$. Hence, $f\not\in e_{b_m X} G$. The same argument works for $\sup$. 

\medskip

Let $f: b_m X\to b_m X$ satisfies conditions (i)--(v).  Take an arbitrary nbd $O_f=\bigcap\limits_{k=1}^n [x_k, O_{f(x_k)}]$ of $f$ 
where, without lose of generality, $\inf<x_1<\ldots<x_n<\sup$ (use (v)), and $x_k\in X^0\cup\Gamma$, $k=1,\ldots, n$, (use (iv)); 

\noindent $O_{f(x_k)}$, $k=1,\ldots, n$ are clopen intervals, $O_{f(x_k)}=O_{f(x_{k+1})}$ if $f(x_k)=f(x_{k+1})$ and  $O_{f(x_k)}\cap O_{f(x_{k+1})}=\emptyset$ if $f(x_k)<f(x_{k+1})$,  $k=1,\ldots, n-1$ (use (i)). 

Moreover, if $f(x_k)=(y_k, 0)$, then $O_{f(x_k)}=\{(y_k, 0)\}$, 

\noindent if $f(x_k)=(y_k, 1)$, then $O_{f(x_k)}=((y_k, 0), (z_k, 0))$ for some $z_k\in X$, 

\noindent if $f(x_k)=(y_k, -1)$, then $O_{f(x_k)}=((z_k, 0), (y_k, 0))$ for some $z_k\in X$.

In order to construct automorphism $g\in O_f$ we sequentially determine two finite increasing sequences of points $t_k\leq t'_k<t_{k+1}$ and $\tau_k\leq\tau'_k<\tau_{k+1}$ in $X$, $k=1,\ldots, n$. 

If $O_{f(x_k)}=\{(y_k, 0)\}$, then $x_k=(x_k, 0)\in X^0$ by (ii), $t'_k=t_k=x_k$, and $\tau_k=\tau'_k=y_k$.

Let $O_{f(x_k)}=((y_k, 0), (z_k, 0))$. 

If $x_k=(x_k, 0)\in X^0$, then take any $(\tau_k, 0)\in ((y_k, 0), (z_k, 0))$, $\tau'_{k-1}<\tau_k$ (if $k-1=0$ , then $\inf<(\tau_k, 0)$) and put $t_k=t'_k=x_k$, $\tau_k=\tau'_k$. 

If  $x_k\in\Gamma$, then take any $(\tau_k, 0),\ (\tau'_k, 0)\in ((y_k, 0), (z_k, 0))$, $\tau'_{k-1}<\tau_k<\tau'_k$ and any $t_k,\ t'_k\in X$ such that $t'_{k-1}<t_k<x_k<t'_{k}<x_{k+1}$ (if $k=1$, then $\inf<(t_1, 0)$; if $k=n$, then $(t'_n, 0)<\sup$).  

Since $X$ is ultrahomogeneous, there exists  $g\in G$ that maps an ordered sequence of $t_k,\ t'_k$ onto an ordered sequence of  $\tau_k,\ \tau'_k$ (coinciding points are identified). Evidently, $g\in O_f$ and, hence, $f\in e_{b_m X} G$.

\medskip

(d)  Let $\imath_X: G\to (b_m X)^X$ be the composition of embedding $\imath_{b_m X}: G\to (b_m X)^{b_m X}$ and projection $\pr: (b_m X)^{b_m X}\to (b_m X)^X$. $\imath_X$ is a topological embedding since homeomorphisms from $G$ are continuous and $X$ is a dense subset of $b_m X$. 

Let $\mathcal V_G^X (b_m X)$ be the subset uniformity on $\imath_X (G)$ as a subset of the compact space $(b_m X)^X$. As in item (a), it can be shown that $\mathcal V_G^X (b_m X)=R_{\Sigma_X}$. Since $L\wedge R=R_{\Sigma_X}$, the Roelcke compactification $b_r G$ is the closure of $\imath_X (G)$  in $(b_m X)^{X}$. The restriction $\pr|_{\cl\,\imath_{b_m X} (G)}$  of the projection to $\cl\,\imath_{b_m X} (G)$ is the map of the Ellis compactification $e_{b_m X} G$ onto $b_r G$. This implies the description of $b_r G$ as the set of maps $f$ of $X$ to $b_m X$  in the topology of pointwise convergence such that (i') $f$ is monotone,  (ii') if $f(x)=f(y)$, then $f(x)\in b_m X\setminus X$.

\medskip

If $X$ is a continuously ordered chain, then $R_{\Sigma_X}=R_{\Sigma_{b_m X}}$, because $\st_{\inf}={\rm St}_{\sup}=G$ and $\st_{(x, -1)}=\st_{(x, 1)}=\st_{(x, 0)}$ for $x\in X$. Hence, $\mathcal V_G^X (b_m X)=L\wedge R=R_{\Sigma_X}=R_{\Sigma_{b_m X}}=\mathcal V_G (b_m X)$ and  $b_r G=e_{b_m X} G$. 
\end{proof}

\begin{cor}\label{contchain}
Let $X$ be an ultrahomogeneous discrete chain, $G=(\aut(X), \tau_{\partial})$. Then 
\begin{itemize}
\item[{\rm(1)}] if $b_r G\not\in\mathbb{E}(G)$, then there are no $b G\in\mathbb{E}(G)$ such that $b_r G<b G<e_{b_m X} G$, 
\item[{\rm(2)}] $b_r G\in\mathbb{E}(G)$ iff $X$ is a continuously ordered chain.
\end{itemize}
\end{cor}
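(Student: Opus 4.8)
The plan is to deduce both parts from the structural results of Theorem~\ref{Roelcke precomp4-2-1}, with the single genuine construction reserved for the nontrivial implication of (2). For (1), I would observe that the assertion is immediate from Theorem~\ref{Roelcke precomp4-2-1}(b) and Remark~\ref{nocomp}: the former gives $\mathfrak{E}(b_r G)=e_{b_m X}G$ with $e_{b_m X}G\geq b_r G$, i.e. $e_{b_m X}G$ is the Ellis compactification arising from the representation of $G$ in its own $G$-compactification $b_r G$, and it dominates $b_r G$; applying Remark~\ref{nocomp} with $bX=b_r G$ then yields that no Ellis compactification of $G$ lies strictly between $b_r G$ and $e_{b_m X}G$. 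The hypothesis $b_r G\notin\mathbb{E}(G)$ serves only to make the interval $(b_r G,e_{b_m X}G)$ nondegenerate, since by (2) the equality $b_r G=e_{b_m X}G$ holds exactly when $b_r G\in\mathbb{E}(G)$. For the easy implication of (2), if $X$ is continuously ordered then the final assertion of Theorem~\ref{Roelcke precomp4-2-1} gives $b_r G=e_{b_m X}G$, and $e_{b_m X}G$ is an Ellis compactification, so $b_r G\in\mathbb{E}(G)$.

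For the converse of (2) I would argue by contraposition, showing that if $X$ is not continuously ordered then $b_r G<e_{b_m X}G$, which precludes $b_r G\in\mathbb{E}(G)$: indeed, were $b_r G\in\mathbb{E}(G)$, then $\mathfrak{E}(b_r G)=b_r G$ by Remark~\ref{compmapsth}, whereas Theorem~\ref{Roelcke precomp4-2-1}(b) gives $\mathfrak{E}(b_r G)=e_{b_m X}G$, forcing $b_r G=e_{b_m X}G$. By the proof of Theorem~\ref{Roelcke precomp4-2-1}(d) the map of compactifications $e_{b_m X}G\to b_r G$ is the restriction $p\colon f\mapsto f|_X$ induced by the projection $(b_m X)^{b_m X}\to(b_m X)^X$; as both are compact Hausdorff, the strict inequality $b_r G<e_{b_m X}G$ is equivalent to $p$ being noninjective. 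Thus it suffices to produce two distinct elements of $e_{b_m X}G$ with the same restriction to $X$.

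Here is the construction, which is the crux. Since $X$ is ultrahomogeneous and not continuously ordered, Remark~\ref{propchain}(3) provides a proper gap $\xi\in\Gamma^0$, so that $\inf<\xi<\sup$ and, by density of $X$ in $b_m X$, there are points of $X$ on both sides of $\xi$. For $\zeta\in\{\inf,\sup\}$ define $f_\zeta\colon b_m X\to b_m X$ by $f_\zeta(y)=\inf$ for $y<\xi$, $f_\zeta(\xi)=\zeta$, and $f_\zeta(y)=\sup$ for $y>\xi$. Using the characterization in Theorem~\ref{Roelcke precomp4-2-1}(c) I would check directly that $f_{\inf},f_{\sup}\in e_{b_m X}G$: each is monotone, so (i) holds; each takes only the values $\inf,\sup\in b_m X\setminus X$, giving (ii) and (iii); each fixes $\inf$ and $\sup$, giving (v); and for every $x\in X$ the three points $(x,-1),(x,0),(x,1)$ lie on the same side of the proper gap $\xi$ and hence share a common image in $\{\inf,\sup\}$, giving (iv). Finally $f_{\inf}$ and $f_{\sup}$ coincide off $\xi$, in particular on $X$ (as $\xi\notin X$), while $f_{\inf}(\xi)=\inf\neq\sup=f_{\sup}(\xi)$; thus $p(f_{\inf})=p(f_{\sup})$ with $f_{\inf}\neq f_{\sup}$, so $p$ is not injective and $b_r G<e_{b_m X}G$.

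The main obstacle I anticipate is exactly this converse: one must convert the presence of a single proper gap into a genuine failure of injectivity of the restriction map, i.e. an element of $e_{b_m X}G$ whose value on a gap is undetermined by its values on $X$. The description in Theorem~\ref{Roelcke precomp4-2-1}(c) reduces this to a finite verification for the step maps $f_\zeta$, the only delicate point being condition (iv)---that no proper gap can separate a triple $(x,-1),(x,0),(x,1)$, which holds because consecutive such points are separated only by jumps, never by gaps.
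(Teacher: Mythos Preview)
Your proof is correct, but the converse of (2) proceeds by a genuinely different route from the paper's. The paper argues at the level of uniformities: it fixes a proper gap $\xi$ and shows directly that no uniform cover $\{Vg\,\st_{\sigma}\mid g\in G\}$ with $\sigma\in\Sigma_X$ refines the cover $\{Og\,\st_{\xi}\mid g\in G\}\in R_{\Sigma_{b_m X}}$, by exhibiting for each $\sigma$ a pair $g,gh$ in the same $R_{\Sigma_X}$-set whose images of $\xi$ lie on opposite sides of a prescribed point $(x,0)$. This yields $L\wedge R=R_{\Sigma_X}\ne R_{\Sigma_{b_m X}}=\mathcal V_G$, hence $b_r G<e_{b_m X}G$. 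You instead work at the level of the compactifications themselves, using the explicit descriptions in Theorem~\ref{Roelcke precomp4-2-1}(c),(d): your two ``step'' maps $f_{\inf},f_{\sup}$ are a clean witness that the restriction map $e_{b_m X}G\to b_r G$ collapses information carried only at the gap $\xi$. Your approach is arguably more transparent once (c) and (d) are available, and it makes visually clear \emph{where} the noninjectivity lives; the paper's uniformity argument, on the other hand, is self-contained in the sense that it does not rely on the pointwise description of $e_{b_m X}G$ and fits more directly into the $R_{\Sigma}$-framework used throughout.
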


\begin{proof} (1) follows from Remark~\ref{nocomp}.

Sufficiency in (2) is proved in Theorem~\ref{Roelcke precomp4-2-1}. Necessity. Suppose that $X$ is not a continuously ordered chain. Let us show that  $R_{\Sigma_X}\ne R_{\Sigma_{b_m X}}$.

Take $y\in X$ and let the cover $\Omega_y=\{(\gets, (y, 0))\cup\{(y, 0)\}\cup((y, 0), \to)\}$ be from uniformity on $b_m X$, $\xi$ is a proper gap in $X$, $O\in N_G(e)$ is such that $\{Ox\ |\ x\in b_m X\}\succ\Omega_y$.

For $\sigma\in\Sigma_X$ let  $x, x'\in\sigma$ be such that $x<\xi<x'$, and $\sigma\cap (x, x')=\emptyset$ (without loss of generality, we may assume that this always holds). Take  $t_1, t_1', t_2, t_2'\in X$ such that $x<t_1<t_1'<\xi<t_2<t_2'<x'$ and $g\in G$ such that $g(t_2)<y$, $g(t_2')>y$. Let $h\in G$ sends  points  $x<t_2<t_2'<x'$ to  $x<t_1<t_1'<x'$ and identity on the points from $\sigma$. Then $h\in\st_{\sigma}$ and, hence, for any $V\in N_G(e)$ $gh\in Vg\st_{\sigma}$. Further, $g(\xi)<y$, $gh(\xi)>y$. Therefore, $gh\not\in Og\st_{\xi}$. 

It is shown that for any $\sigma\in\Sigma_X$  and  $V\in N_G(e)$ the uniform cover $\{Vg\st_{\sigma}\ |\ g\in G\}\in R_{\Sigma_X}$ is not refined in the uniform cover $\{Og\st_{\xi}\ |\ g\in G\}\in R_{\Sigma_{b_m X}}$. Hence, $L\wedge R=R_{\Sigma_X}\ne R_{\Sigma_{b_m X}}=\mathcal V_G (b_m X)$, $b_r G<\mathfrak{E} (b_r G)=e_{b_m X} G$ and $b_r G\not\in\mathbb{E}(G)$.
\end{proof}

\medskip

For the map $b_m X\to b^{\leftrightarrow}X$ of $G$-compactifications of $X$, $G=(\aut (X), \tau_{\partial})$. 
$$I^m_{\leftrightarrow}=\{f\in e_{b_m X} G\ |\ f(x)=\inf\,\vee\,\sup,\ x\in b_m X\}$$
is a nonempty compact ideal of $e_{b_m X} G$. Since $f\in e_{b_m X} G$ is a monotone map, it is easy to verify that the condition (EF) from Proposition~\ref{Reesquot} holds. The same reasoning works for the map $b^{\updownarrow}X\to  b^{+}X$ and the correspondent ideal 
$$I^{\updownarrow}_{+}=\{f\in e_{b^{\updownarrow} X} G\ |\ f(x)=\inf\,\vee\,\sup,\ x\in b^{\updownarrow} X\}$$
($\inf$ and $\sup$ in $b^{\updownarrow}X$ are correspondent images of $\inf$ and $\sup$ in $b_m X$).

\begin{cor}
Let $X$ be an ultrahomogeneous discrete chain, $G=(\aut(X), \tau_{\partial})$. 
$$e_{ b^{\leftrightarrow}X} G=e_{b_m X} G/I^m_{\leftrightarrow},\ e_{ b^+ X} G=e_{b^{\updownarrow} X} G/I^{\updownarrow}_{+}.$$
\end{cor}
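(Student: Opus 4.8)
The plan is to obtain both identities as direct applications of Proposition~\ref{Reesquot} to the two elementary $G$-maps of $G$-compactifications isolated in Lemma~\ref{compd}, namely $\varphi\colon b_m X\to b^{\leftrightarrow}X$ and $\psi\colon b^{\updownarrow}X\to b^{+}X$. In both cases the single nondegenerate fibre is $\{\inf,\sup\}$, collapsed to the point $\infty$; since $\st_{\inf}=\st_{\sup}=G$ the point $\infty$ is fixed, exactly as required in the set-up preceding Proposition~\ref{Reesquot}. With $y=\infty$ the ideal $I=\{f\mid f(x)\in\varphi^{-1}(\infty),\ x\in b_m X\}$ furnished by that proposition is by definition $I^m_{\leftrightarrow}$, and the ideal attached to $\psi$ is $I^{\updownarrow}_{+}$. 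Thus everything reduces to checking, for each of the two maps, that the relevant ideal is nonempty and that condition (EF) holds; Proposition~\ref{Reesquot} then yields the two topological isomorphisms $e_{b_m X}G/I^m_{\leftrightarrow}\cong e_{b^{\leftrightarrow}X}G$ and $e_{b^{\updownarrow}X}G/I^{\updownarrow}_{+}\cong e_{b^{+}X}G$.

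First I would dispose of (EF), which here asserts that two members of the Ellis semigroup agreeing off $\{\inf,\sup\}$ must agree everywhere. The point is that \emph{every} element of the semigroup already fixes $\inf$ and $\sup$, so the values on the exceptional fibre are never free. For $e_{b_m X}G$ this is clause (v) of Theorem~\ref{Roelcke precomp4-2-1}(c). For $e_{b^{\updownarrow}X}G$, for which no explicit list of elements has been produced, the same fact comes from the closure description: if some $f\in\cl\,\imath_{b^{\updownarrow}X}(G)$ had $f(\inf)\ne\inf$, then a basic neighbourhood $[\inf,O_{f(\inf)}]$ with $\inf\notin O_{f(\inf)}$ would be disjoint from $\imath_{b^{\updownarrow}X}(G)$, because every $g\in G$ fixes $\inf$; this is impossible, and symmetrically $f(\sup)=\sup$. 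Hence in both cases (EF) holds trivially, and monotonicity is not even needed for it.

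Next I would verify nonemptiness. For the first map it suffices to exhibit one $f\in e_{b_m X}G$ with image in $\{\inf,\sup\}$: fix $x_0\in X$ and set $f(z)=\inf$ for $z\le (x_0,1)$ and $f(z)=\sup$ otherwise. This $f$ is nondecreasing, constant on every triple $(x,-1),(x,0),(x,1)$, takes values in $b_m X\setminus X$, and fixes $\inf$ and $\sup$, so it satisfies (i)--(v) of Theorem~\ref{Roelcke precomp4-2-1}(c) and therefore lies in $e_{b_m X}G\cap I^m_{\leftrightarrow}$. For the second map I would not repeat such a description but instead transport this example along the homomorphism $\Pi\colon e_{b_m X}G\to e_{b^{\updownarrow}X}G$ induced, via Proposition~\ref{mapelliscomp}, by the quotient $\pi\colon b_m X\to b^{\updownarrow}X$: since the values of $\Pi(f)$ are of the form $\pi(f(z))$ and $\pi$ fixes $\inf$ and $\sup$, the map $\Pi(f)$ again takes values in $\{\inf,\sup\}$, whence $\Pi(f)\in I^{\updownarrow}_{+}$ and $I^{\updownarrow}_{+}\ne\emptyset$.

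The step I expect to demand the most care is the second identity, precisely because $e_{b^{\updownarrow}X}G$ is not accompanied by an explicit pointwise description as $e_{b_m X}G$ is in Theorem~\ref{Roelcke precomp4-2-1}(c). The two devices above are designed to circumvent this: the closure argument supplies (EF) without any list of elements, and pushing an element of $I^m_{\leftrightarrow}$ forward through $\Pi$ supplies a witness for $I^{\updownarrow}_{+}\ne\emptyset$. Once nonemptiness and (EF) are in hand for both $\varphi$ and $\psi$, Proposition~\ref{Reesquot} completes the proof of both equalities.
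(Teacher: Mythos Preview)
Your proposal is correct and follows essentially the same route as the paper: apply Proposition~\ref{Reesquot} to the elementary maps $b_m X\to b^{\leftrightarrow}X$ and $b^{\updownarrow}X\to b^{+}X$ after checking that the ideals $I^m_{\leftrightarrow}$, $I^{\updownarrow}_{+}$ are nonempty and that condition (EF) holds. The paper invokes monotonicity for (EF) and simply asserts that the same reasoning covers the second map, whereas you pin (EF) directly on the fact that every element of the enveloping semigroup fixes $\inf$ and $\sup$ (via clause (v) of Theorem~\ref{Roelcke precomp4-2-1}(c) in the first case and a closure argument in the second) and supply an explicit witness and its push-forward for nonemptiness; this is cleaner and more complete, but not a genuinely different argument.
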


\begin{df}
Let $X$ be a chain. $f$ is a partial automorphism of $X$ if $f$ is a partial bijection and $f(x)<f(y)$ whenever $x, y\in D(f)=\{z\in X\ |\ f(z)\ \mbox{is defined}\}$, $x<y$.

$J_X$ is the set of all partial automorphisms of a chain $X$.
\end{df}

\begin{lem}
Let $X$ be a chain. $J_X$ is an inverse semigroup {\rm(}multiplication is composition of maps{\rm)} and is a subsemigroup of the symmetric inverse semigroup $I_X$. 

$J_X$ is a semitopological inverse monoid with continuous inverse in the topology of pointwise convergence {\rm(}elements of $J_X$ are maps on a discrete chain $X${\rm)}.
\end{lem}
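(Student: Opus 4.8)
The plan is to establish the two assertions in turn, first the algebraic one and then the topological one, the latter by transferring structure along the inclusion $J_X\subset I_X$ and invoking Theorem~\ref{Roelcke precomp4-1}.

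For the algebraic part I would first show that $J_X$ is closed under composition, so that it is a subsemigroup of $I_X$. If $f,g\in J_X$, then $f\circ g\in I_X$ because $I_X$ is a semigroup, and for $x<y$ in $D(f\circ g)$ one has $g(x)<g(y)$ (as $g$ is order preserving) and hence $f(g(x))<f(g(y))$ (as $f$ is order preserving and $g(x),g(y)\in D(f)$); thus $f\circ g$ is a partial automorphism. Next I would check closure under the involution of $I_X$: if $u<v$ lie in $D(f^{-1})=f(D(f))$, writing $u=f(x)$ and $v=f(y)$, injectivity and monotonicity of $f$ force $x<y$, so $f^{-1}$ is order preserving and lies in $J_X$. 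Since $\id_X\in J_X$, the set $J_X$ is closed under multiplication and the involution and contains the identity; because inverses in $I_X$ are unique, the inverse of $f\in J_X$ computed in $I_X$ already lies in $J_X$ and serves as its inverse there. Hence $J_X$ is an inverse monoid and an inverse subsemigroup of $I_X$.

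For the topological part I would identify the topology of pointwise convergence on $J_X$ with the subspace topology inherited from $I_X$, the latter regarded (as in Theorem~\ref{Roelcke precomp4-1}) as the set of self-maps $f$ of $\alpha X$ with $f(\infty)=\infty$, an undefined value being recorded as $\infty$. By Theorem~\ref{Roelcke precomp4-1}(d), $I_X=e_{\alpha X}({\rm S}(X),\tau_p)$ is a semitopological inverse monoid with continuous inverse; in particular every element of $I_X$ is a continuous self-map of $\alpha X$, so composition is separately continuous and the involution is continuous on $I_X$. Since $J_X$ is a subsemigroup closed under the involution, both the separately continuous multiplication and the continuous involution restrict to the subspace $J_X$, which makes $J_X$ a semitopological inverse monoid with continuous inverse.

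The delicate points are topological rather than algebraic. Right translations on $(\alpha X)^{\alpha X}$ are automatically continuous, but the continuity of the left translation $g\mapsto f\circ g$ requires $f$ to be continuous at $\infty$; here I would either cite Theorem~\ref{Roelcke precomp4-1}(d) or argue directly that for finite $\sigma\subset X$ the set $\{x\in X\mid f(x)\in\sigma\}$ is finite (since $f$ is injective on $D(f)$ and maps $X\setminus D(f)$ to $\infty$), so $f$ is continuous at $\infty$. I also need the identification of the pointwise topology on partial maps with the subspace topology from $I_X$ to be exact, and the uniqueness of inverses in $I_X$ so that the involution on $J_X$ genuinely coincides with $f\mapsto f^{-1}$. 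None of these is a serious obstacle; the real content of the lemma is the order-preservation bookkeeping in the algebraic part together with the reduction to Theorem~\ref{Roelcke precomp4-1}.
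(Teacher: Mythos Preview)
Your proposal is correct and follows essentially the same approach as the paper: show closure under composition and inverse to get the inverse subsemigroup structure inside $I_X$, then derive the topological claim from the inclusion $J_X\subset I_X$. The paper's own proof is extremely terse (``From this the last statement follows''), while you spell out the order-preservation checks and make explicit the appeal to Theorem~\ref{Roelcke precomp4-1}(d) for the semitopological structure on $I_X$; this added detail is appropriate and does not constitute a different method.
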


\begin{proof}
Indeed, $J_X$ is a semigroup since the composition of partial automorphisms is a partial automorphism and the composition is associative.  Since $x<y\Longleftrightarrow f(x)<f(y)$, $f\in J_X$, and the inverse element in $I_X$ to $f$ is an inverse map $f^{-1}$, $J_X$ is a subsemigroup of $I_X$. From this the last statement follows.
\end{proof}

\begin{thm}\label{sischain}
Let $X$ be an ultrahomogeneous discrete chain, $G=(\aut(X), \tau_{\partial})$. $e_{\alpha X} G$ is a sim-compactification of $G$ {\rm(}isomorphic to the semitopological inverse monoid $J_X${\rm)}.
\end{thm}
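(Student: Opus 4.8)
The plan is to identify $e_{\alpha X} G=\cl\,\imath_{\alpha X}(G)$ concretely with $J_X$ and to transport the semitopological inverse monoid structure of $J_X$ back to $e_{\alpha X} G$ through this identification. Since $G=(\aut(X),\tau_{\partial})$ is $\tau_p$-representable in $\alpha X$ (by Lemma~\ref{l1}, because $\alpha X$ is one of the $G$-compactifications of $X$ listed in Lemma~\ref{compd}), Proposition~\ref{diagactalg} gives that $\imath_{\alpha X}\colon G\to(\alpha X)^{\alpha X}$ is a topological embedding and that $e_{\alpha X} G$ is an Ellis compactification of $G$, hence in particular a proper $G$-compactification and a right topological monoid. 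It therefore remains to describe the points of $e_{\alpha X}G$, to exhibit the isomorphism with $J_X$, and to upgrade ``right topological'' to ``semitopological inverse monoid with continuous inverse.''

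First I would characterize the elements $f\in e_{\alpha X} G\subset(\alpha X)^{\alpha X}$, following the neighbourhood arguments of Theorem~\ref{Roelcke precomp4-1}(c) but now keeping track of the order. Exactly as there, $f(\infty)=\infty$ and $f$ cannot identify two distinct points of $X$ with a common value lying in $X$; the new feature is that $f$ must be \emph{order preserving} on $D(f)=\{x\in X\mid f(x)\neq\infty\}$. Indeed, if $x<y$ in $D(f)$ but $f(x)>f(y)$ in $X$, then, as in Theorem~\ref{Roelcke precomp4-3-1}(c), the set $W=[x,\{f(x)\}]\cap[y,\{f(y)\}]$ is a neighbourhood of $f$ whose intersection with $\imath_{\alpha X}(G)$ is empty, since every $g\in G$ is an automorphism. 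Hence $f|_{D(f)}$ is a partial automorphism of $X$, and $\xi(f)=f|_{D(f)}$ defines a map $\xi\colon e_{\alpha X}G\to J_X$.

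The harder half is surjectivity of $\xi$, i.e. that every $\varphi\in J_X$, extended by $\infty$ off $D(\varphi)$, lies in $\cl\,\imath_{\alpha X}(G)$. Given a basic neighbourhood $W=\bigcap_{k=1}^n[x_k,O_{f(x_k)}]$ of such an $f$ with $x_1<\dots<x_n$, I must produce $g\in G$ lying in $W$. Here I would use that an infinite ultrahomogeneous chain is densely ordered and has neither a least nor a greatest element (Remark~\ref{propchain}), so that every nonempty open interval and every initial or final ray of $X$ is infinite. For the indices with $f(x_k)\in X$ one has $O_{f(x_k)}=\{f(x_k)\}$, and these prescribed values are already increasing because $\varphi$ is order preserving; for the indices with $f(x_k)=\infty$ one has $O_{f(x_k)}=\alpha X\setminus\sigma_{x_k}$ with $\sigma_{x_k}\in\Sigma_X$, and I would choose targets $z_k\in O_{f(x_k)}\cap X$ by interleaving the ``escape'' values (those with $f(x_k)=\infty$) into the order gaps below, between, or above the fixed finite values while avoiding the finite sets $\sigma_{x_k}$, so that $z_1<\dots<z_n$; density and the absence of endpoints guarantee such $z_k$ exist. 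Ultrahomogeneity then yields $g\in G$ with $g(x_k)=z_k$ for all $k$, whence $\imath_{\alpha X}(g)\in W$. This realization step---simultaneously meeting the order constraints and the finitely many ``go to $\infty$'' constraints---is the main obstacle.

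Finally I would check that $\xi$ is an isomorphism of monoids and a homeomorphism. The homomorphism identity $\xi(f\circ g)=\xi(f)\circ\xi(g)$ is the domain-restriction computation of Theorem~\ref{Roelcke precomp4-1}(d); injectivity is immediate, since $f$ is determined by $f|_{D(f)}$ together with $f\equiv\infty$ off $D(f)$. Identifying $J_X$ with its copy of total maps $\alpha X\to\alpha X$ (undefined values sent to $\infty$) carrying the topology of pointwise convergence, $\xi$ is precisely the inclusion of $e_{\alpha X}G$ into $(\alpha X)^{\alpha X}$ read coordinatewise, hence a homeomorphism onto its image. Since the Lemma preceding this theorem states that $J_X$ is a semitopological inverse monoid with continuous inverse in this topology, transporting that structure through the topological isomorphism $\xi$ shows $e_{\alpha X}G$ is a semitopological inverse monoid with continuous inverse; being also a proper $G$-compactification of $G$, it is a sim-compactification of $G$ isomorphic to $J_X$.
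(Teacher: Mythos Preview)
Your proof is correct, but it follows a different route from the paper's. The paper does not redo the neighbourhood analysis; instead it factors through results already established earlier. It uses the map of compactifications $\varphi\colon b_mX\to\alpha X$ and Proposition~\ref{mapelliscomp} to obtain $\Phi\colon e_{b_mX}G\to e_{\alpha X}G$, and then reads off the description of $e_{\alpha X}G$ directly from the detailed description of $e_{b_mX}G$ in Theorem~\ref{Roelcke precomp4-2-1}(c). For the semitopological inverse monoid structure, the paper observes that $\aut(X)\subset{\rm S}(X)$ implies $e_{\alpha X}G$ is a closed subsemigroup of $e_{\alpha X}{\rm S}(X)$, and then restricts the isomorphism $\xi\colon e_{\alpha X}{\rm S}(X)\to I_X$ from Theorem~\ref{Roelcke precomp4-1} to obtain $\xi|_{e_{\alpha X}G}\colon e_{\alpha X}G\to J_X$; the sim-structure is thus inherited from the ambient $I_X\cong e_{\alpha X}{\rm S}(X)$ rather than verified directly.

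Your direct approach has the virtue of being self-contained: it does not depend on the long case analysis of Theorem~\ref{Roelcke precomp4-2-1} and would stand on its own. The paper's approach is shorter precisely because it reuses that earlier machinery. Your surjectivity step (interleaving the ``escape'' values into order gaps using density and the absence of endpoints) is the one genuine extra piece of work you must do and which the paper avoids by projecting from $e_{b_mX}G$; it is carried out correctly.
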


\begin{proof}
Let $\varphi: b_m X\to\alpha X$ be the map of compactifications. By Proposition~\ref{mapelliscomp} the map $\Phi: e_{b_m X} G\to e_{\alpha X} G$ of Ellis compactifications is defined. From the descriptions of $\Phi$ and $e_{b_m X} G$ it follows that $ e_{\alpha X} G$ is the set of self-maps $f$ of $\alpha X=X\cup\{\infty\}$ in the topology of pointwise convergence such that (i'') $f$ is strongly monotone (if $x<y$, then $f(x)<f(y)$) on the set $Y\subset X$, (i'') $f(x)=\infty$ if $x\in\alpha X\setminus Y$.

Since $\aut (X)$ is a subgroup of $S(X)$, $e_{\alpha X} G$ is a compact subsemigroup of $e_{\alpha X}({\rm S}(X))$. In Theorem~\ref{Roelcke precomp4-1} the isomorphism $\xi: e_{\alpha X}({\rm S}(X))\to I_X$ is defined. Its restriction $\xi|_{e_{\alpha X} G}$ to $e_{\alpha X} G$ is an isomorphism of $e_{\alpha X} G$ onto $J_X$. 
\end{proof}

\begin{rem}
{\rm For the map $b_m X\to \alpha X$ of $G$-compactifications of $X$, $G=(\aut (X), \tau_{\partial})$,  
$$I^m_{\alpha}=\{f\in e_{b_m X} G\ |\ f(x)\in b_m X\setminus X,\ x\in b_m X\}$$
is a nonempty compact ideal of $e_{b_m X} G$ by condition (ii) of item (c) of Theorem~\ref{Roelcke precomp4-2-1}. However, the condition (EF) from Proposition~\ref{Reesquot} doesn't hold.}
\end{rem}

As for $I_X$ all proper ideals of the inverse monoid $J_X$ (where $\emptyset$ is $0$) are 
$$J_X^{\delta}=\{f\in J_X\ |\ |{\rm D} (f)|\leq\delta\},\  1\leq\delta<|X|,\ J_X^{0}=\emptyset.$$ 
Ideals $J_X^{\delta}$ are invariant under the involution defined by the inverse. 

\begin{cor}\label{sischaindiscr}
In $e_{\alpha X} G$ {\rm(}$G=(\aut (X), \tau_{\partial})${\rm)}   
$$J_X^n=\{f\in e_{\alpha X} G\ |\ |D(f)|\leq n\},\ n\in\mathbb N,$$
are compact ideals. The Rees quotients $e_{\alpha X} G/\rho_{J_X^n}$, $n\in\mathbb N$, are sim-compactifications of $G$.

If $X$ is countable, then sim-compactifications $e_{\alpha X} G$ and $e_{\alpha X} G/\rho_{J_X^n}$, $n\in\mathbb N$, of $G$ are metrizable {\rm(}homeomorphic to the Cantor set{\rm)}.
\end{cor}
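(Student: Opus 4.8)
The plan is to follow the proof of Corollary~\ref{sisdiscr} verbatim, transporting each step across the isomorphism $\xi|_{e_{\alpha X} G}\colon e_{\alpha X} G\to J_X$ of Theorem~\ref{sischain}. First I would show that each $J_X^n$ is closed in the compact space $e_{\alpha X} G$, hence compact. It suffices to check that the complement $\{f\in e_{\alpha X} G\mid |D(f)|>n\}$ is open: if $f$ has $n+1$ distinct points $x_1,\ldots,x_{n+1}\in D(f)$, then the pointwise-convergence neighbourhood $W=\bigcap_{i=1}^{n+1}[x_i,f(x_i)]$ of $f$ satisfies $W\cap J_X^n=\emptyset$, since every $g\in W$ has $g(x_i)=f(x_i)\ne\infty$ and thus $|D(g)|\geq n+1$. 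This is the argument already used for $I_X^n$ just before Corollary~\ref{sisdiscr}; together with the preceding description it shows $J_X^n$ is a compact ideal invariant under the involution given by the inverse.

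Next I would deduce that the Rees quotients are sim-compactifications of $G$. By Theorem~\ref{sischain}, $e_{\alpha X} G$ is a semitopological inverse monoid with continuous inverse, and $J_X^n$ is a compact ideal with $(J_X^n)^*=J_X^n$. Lemma~\ref{ideals}(c) then yields that $e_{\alpha X} G/\rho_{J_X^n}$ is a semitopological inverse monoid, while Lemma~\ref{ideals}(b), applied to the continuous involution coming from the inverse and using $s(J_X^n)=J_X^n$, gives that the induced involution, i.e. the inverse on the quotient, is continuous. It remains to observe that the quotient is a proper $G$-compactification: every $g\in G$ is a total automorphism with $|D(g)|=|X|>n$, so $G\cap J_X^n=\emptyset$, and since the quotient map restricts to a homeomorphism of $e_{\alpha X} G\setminus J_X^n$ onto the complement of $0$ (Lemma~\ref{ideals}), the dense embedding of $G$ survives. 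Thus each $e_{\alpha X} G/\rho_{J_X^n}$ is a sim-compactification of $G$.

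Finally, for countable $X$ I would establish metrizability and the Cantor-set conclusion. Then $\alpha X$ is a countable metrizable compactum, so $(\alpha X)^{\alpha X}$ is metrizable, and $e_{\alpha X} G$ is compact metrizable as a closed subspace; the quotients are compact metrizable as perfect images of a metrizable compactum. To invoke Brouwer's characterisation I would verify that all these spaces are zero-dimensional and dense in themselves. Zero-dimensionality of $e_{\alpha X} G$ is inherited from the zero-dimensional product $(\alpha X)^{\alpha X}$; for the quotient it descends through the collapse of $J_X^n$ to $0$, because a clopen set disjoint from $J_X^n$ and a clopen set containing $J_X^n$ (obtainable since $J_X^n$ is compact in a zero-dimensional space) are both saturated and hence map to clopen sets. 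The step I expect to require the most care is density-in-itself: I would show no point is isolated by producing, in every basic neighbourhood of any $f$, a distinct element of $e_{\alpha X} G$ — for example a partial automorphism agreeing with $f$ on the prescribed finitely many points but sending some further point to $\infty$, or a nearby full automorphism from the dense subgroup $G$ — and for the quotient by noting that $J_X^n$ is not open (every neighbourhood of a point of $J_X^n$ meets $G$, whose elements have infinite domain), so $\{0\}$ is not open. Brouwer's characterisation of the Cantor set then completes the argument.
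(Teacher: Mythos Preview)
Your proof is correct and follows essentially the same route as the paper: compactness of $J_X^n$ via the neighbourhood $W=\bigcap_{i=1}^{n+1}[x_i,f(x_i)]$ (exactly as for $I_X^n$ before Corollary~\ref{sisdiscr}), the sim-compactification conclusion via Lemma~\ref{ideals}, and for countable $X$ metrizability from the product, zero-dimensionality, density-in-itself, and Brouwer's theorem. The paper's own proof is in fact just a two-line pointer back to Corollary~\ref{sisdiscr} and Lemma~\ref{ideals}; you have written out the details the paper leaves implicit, including the verification that $G\cap J_X^n=\emptyset$ so that the dense embedding of $G$ survives in the quotient, and the check that the quotient remains zero-dimensional and perfect, which the paper simply asserts.
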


\begin{proof}
Compactness of $J_X^n$, $n\in\mathbb N$, can be proved in the same way as in Corollary~\ref{sisdiscr}.  $e_{\alpha X} G/\rho_{J_X^n}$, $n\in\mathbb N$, are sim-compactifications of $G$ by Lemma~\ref{ideals}. 

If $X$ is a countable set (linearly isomorphic to $\mathbb Q$), then the same arguments as in the proof of Corollary~\ref{sisdiscr} are applicable for the proof of the last statement. 
\end{proof}

\begin{thm}\label{allcompdiscr}
Let $X$ be an ultrahomogeneous discrete chain, $G=(\aut(X), \tau_{\partial})$.
$$\begin{array}{ccccccccccccc}
& & e_{b^{\leftrightarrow}X} G  &  & &  &  &  & &  & & & \\
& \nearrow & &  \searrow    & & &  &  & &  & & & \\
e_{b_m X} G &   &   &  & e_{b^{+}X} G  & \rightarrow  &  e_{\alpha X} G   & \rightarrow  & \dots & \rightarrow & e_{\alpha X} G/\rho_{J_X^n}  & \rightarrow & \dots \\
 & \searrow & e_{b^{\updownarrow}X} G &  \nearrow  &   & &  &  & &  & & & \\
\end{array}$$
If $X$ is a continuously ordered chain, then $b_r G=e_{b_m X} G$, otherwise $e_{b_m X} G>b_r G>e_{\alpha X} G$ and $b_r G$ is incomparable with $e_{b^{\leftrightarrow}X} G$, $e_{b^{\updownarrow}X} G$ and $ e_{b^{+}X} G $. 
\end{thm}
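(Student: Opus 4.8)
The plan is to read off the whole statement from the order-theoretic and uniform apparatus already assembled, handling the diagram, the continuously ordered case, the two strict inequalities, and the three incomparabilities in turn. The diagram of Ellis compactifications is the image under the poset morphism $\mathfrak{E}_\theta$ (Corollary~\ref{order}) of the diagram of $G$-compactifications of $X$ furnished by Lemma~\ref{compd}; the elementary maps listed there produce the arrows $e_{b_m X}G\to e_{b^{\leftrightarrow}X}G$, $e_{b^{\updownarrow}X}G\to e_{b^+X}G$ and $e_{b^+X}G\to e_{\alpha X}G$, while the tail $e_{\alpha X}G\to\cdots\to e_{\alpha X}G/\rho_{J_X^n}\to\cdots$ is the Rees-quotient chain of Corollary~\ref{sischaindiscr}. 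The equality $b_r G=e_{b_m X}G$ for continuously ordered $X$ is precisely the final assertion of Theorem~\ref{Roelcke precomp4-2-1}.

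Assume now that $X$ is not continuously ordered, so that $\Gamma^0\neq\emptyset$. The strict inequality $e_{b_m X}G>b_r G$ is Corollary~\ref{contchain}: a proper gap forces $R_{\Sigma_X}=L\wedge R\neq R_{\Sigma_{b_m X}}$, whence $b_r G\notin\mathbb{E}(G)$ while $\mathfrak{E}(b_r G)=e_{b_m X}G$. For $b_r G>e_{\alpha X}G$ I would use Proposition~\ref{lemincl}, which (with $\st_\infty=G$, so $R_{\Sigma_{\alpha X}}=R_{\Sigma_X}$) gives $\mathcal V_G\subset R_{\Sigma_X}$ and hence $e_{\alpha X}G\leq b_r G$; strictness follows because the induced surjection $b_r G\to e_{\alpha X}G$ is not injective, the two constant maps of $X$ onto distinct gaps $\xi_1\neq\xi_2\in\Gamma^0$ being elements of $b_r G$ (monotone, collapsing only to gaps) that both project to the constant map with value $\infty$.

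For the incomparabilities I would argue each half abstractly. No intermediate compactification dominates $b_r G$: the compactifications $e_{b^{\leftrightarrow}X}G$ and $e_{b^{\updownarrow}X}G$ lie strictly below $e_{b_m X}G$ (via the nonempty proper Rees ideal $I^m_{\leftrightarrow}$, respectively the nontrivial congruence collapsing the neighbour pairs), and hence so does $e_{b^+X}G$; by Theorem~\ref{Roelcke precomp4-2-1}(b) together with Remark~\ref{nocomp} no Ellis compactification lies strictly between $b_r G$ and $e_{b_m X}G$, and as none of the three equals $b_r G$ (which is not in $\mathbb{E}(G)$), none can be $\geq b_r G$. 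Conversely none is dominated by $b_r G$: since $e_{b^+X}G\leq e_{b^{\leftrightarrow}X}G$ and $e_{b^+X}G\leq e_{b^{\updownarrow}X}G$, it suffices to prove $e_{b^+X}G\nleq b_r G$, i.e. $\mathcal V_G\not\subset R_{\Sigma_X}$ for the subspace uniformity $\mathcal V_G$ of $e_{b^+X}G$. A proper gap $\xi\in\Gamma^0$ survives in $b^+X$ (only $\inf\sim\sup$ and the neighbour pairs are identified), so the single-coordinate cover coming from $\pr_{\{\xi\}}$ lies in $\mathcal V_G$; the computation in the proof of Corollary~\ref{contchain} shows verbatim that this $\st_\xi$-cover is refined by no cover $\{Vg\st_\sigma:g\in G\}$ with $\sigma\in\Sigma_X$, giving $\mathcal V_G\not\subset R_{\Sigma_X}$.

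I expect the second half of the incomparability to be the main obstacle, precisely because one must resist the false inference that the surviving gaps force $R_{\Sigma_X}\subset\mathcal V_G$ (which would wrongly yield $e_{b^\star X}G\geq b_r G$). The resolution is structural: the defining identifications of $b^{\leftrightarrow}X$, $b^{\updownarrow}X$ and $b^+X$---the circular gluing $\inf\sim\sup$ and the neighbour-merging $(x,-1)\sim(x,1)$---break condition $(\star\star)$, so each $\mathcal V_G$ is only a proper subuniformity of the corresponding $R_{\Sigma_{b^\star X}}$ and in particular omits the finite-stabilizer covers that a containment $R_{\Sigma_X}\subset\mathcal V_G$ would demand. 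One therefore establishes $R_{\Sigma_X}\not\subset\mathcal V_G$ abstractly, from the no-intermediate-compactification result above, and reserves the explicit gap-cover computation solely for the opposite non-containment $\mathcal V_G\not\subset R_{\Sigma_X}$.
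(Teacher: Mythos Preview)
Your overall architecture matches the paper closely: the diagram via $\mathfrak E_\theta$ and Lemma~\ref{compd}, the continuously-ordered case and the strict inequality $e_{b_m X}G > b_r G$ from Theorem~\ref{Roelcke precomp4-2-1} and Corollary~\ref{contchain}, and the ``no intermediate Ellis compactification'' device (Remark~\ref{nocomp}) for ruling out $b_r G \leq e_{b^\star X}G$ are exactly how the paper proceeds. For $b_r G > e_{\alpha X}G$ you take a slightly different, more concrete route (Proposition~\ref{lemincl} plus two constant gap-valued maps witnessing non-injectivity) than the paper's realization of $b_r G \subset (b_m X)^X$ together with the product map $\varphi^X$; both are valid.

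The genuine problem is your argument for $b_r G \ngeq e_{b^+X}G$. You conflate two different covers of $G$: the single-coordinate cover at a gap $\xi$ coming from $\pr_{\{\xi\}}$ (which lies in $\mathcal V^+_G$, the subspace uniformity from $(b^+X)^{b^+X}$), and the cover $\{Og\,\st_\xi : g\in G\}$ treated in Corollary~\ref{contchain} (which lies in $R_{\Sigma_{b_m X}}$ and is built from the uniformity on $b_m X$). Proposition~\ref{lemincl} only gives $\mathcal V^+_G \subset R_{\Sigma_{b^+X}}$; the $\st_\xi$-covers \emph{refine} the single-coordinate covers, they do not coincide with them, so showing that a particular $R_{\Sigma_{b_m X}}$-cover is refined by no $R_{\Sigma_X}$-cover does not show the same for any $\mathcal V^+_G$-cover. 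Concretely, the construction in Corollary~\ref{contchain} places $g\xi$ and $gh\xi$ on opposite sides of a fixed $(x,0)$ in $b_m X$, but in $b^+X$ the identification $(x,-1)\sim(x,1)$ makes gaps just below and just above $x$ close to one another, so an entourage on $b^+X$ need not separate them; the computation does not transfer ``verbatim''. The paper avoids the gap $\xi$ entirely and instead reuses the LOTS device from Theorem~\ref{Roelcke precomp4-3-1}(b) and Proposition~\ref{LOTcomp}(b): fix $x\in X$, take the $x$-coordinate entourage $\mathrm W=\{(g,h):(g(x),h(x))\in\mathrm U\}\in\mathcal V^+_G$ with $\mathrm U$ separating fixed $y<z$ and $\infty$ in $b^+X$, and use ultrahomogeneity to produce, for each $O\in N_G(e)$, elements $g$ and $h\in O$ with $g(x)=y$, $gh(x)=z$, so that $gh\in OgO$ but $(g,gh)\notin\mathrm W$.
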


\begin{proof}
The arrows in the diagram are strict inequalities in order of compactifications of $G$. Indeed, the maps $e_{b_m X} G\to  e_{b^{\leftrightarrow}X} G$, 
$e_{b^{\updownarrow}X} G\to e_{b^{+}X} G$, $e_{\alpha X} G \to  e_{\alpha X} G/\rho_{J_X^1}$ and $e_{\alpha X} G/\rho_{J_X^n}\to e_{\alpha X} G/\rho_{J_X^{n+1}}$, $n\in\mathbb N$, are Rees quotients maps and ideals are not trivial.

The congruence on $e_{b_m X} G$ (respectively $e_{b^{\leftrightarrow}X} G$, $e_{b^{+}X} G$) defined by the map of compactifications $b_m X\to b^{\updownarrow}X$ (respectively $b^{\leftrightarrow}X\to b^{+}X$, $b^{+}X\to\alpha X$) is not trivial. 

\medskip

If $X$ is a continuously ordered chain, then by Theorem~\ref{Roelcke precomp4-2-1} $b_r G=e_{b_m X} G$. Otherwise $e_{b_m X} G>b_r G$ by Theorem~\ref{Roelcke precomp4-2-1} and Corollary~\ref{contchain}.

By Theorem~\ref{Roelcke precomp4-2-1} $b_r G$ is a compact subset of $(b_m X)^X$, $e_{\alpha X} G$ is a compact subset of $(\alpha X)^{\alpha X}$ which naturally can be identified with a compact subset of $(\alpha X)^X$. If $\varphi: b_m X\to\alpha X$ is the map of compactifications, then the restriction of the map $\Phi: (b_m X)^X\to (\alpha X)^X$ ($\Phi=\varphi^X$) to $b_r G$ is the map of compactification $b_r G\to e_{\alpha X} G$ and $b_r G>e_{\alpha X} G$. 

By Corollary~\ref{contchain} the inequalities $b_r G\leq e_{b^{+}X} G$, $b_r G\leq e_{b^{\leftrightarrow}X} G$ or $b_r G\leq e_{b^{\updownarrow}X} G$ are not valid. Therefore, to end the proof of the theorem, it remains to check that the inequality  $b_r G\geq e_{b^{+}X} G$ is not valid. 

Let $\mathcal U^+_X$ be the subspace uniformty on $X$ as subset of $b^{+} X$. Take $x, y\in X$ and ${\rm U}\in \mathcal U^+_X$ such that $\exists\ z\in X$, $y<z$, $(y, z)\not\in {\rm U}$. 

Fix ${\rm W}=\{(h, g)\in G\times G\ |\ (g(x), h(x))\in {\rm U}\}\in\mathcal V_G (b^{+} X)$.  Ultrahomogeneity of $X$ yields that $\forall\ O\in N_G (e)$ $\exists\ g\in G$ and $\exists\ h\in O$ such that $x<h(x)$, $g(x)=y$, $g(hx)=z$. Therefore, $gh\in OgO$, however, $(g(x), (gh)(x))\not\in {\rm U}$. Hence, $ V_G  (b^{+} X)\not\subset L\wedge R$ and $b_r G\ngeq e_{b^{+}X} G$.
\end{proof}

\begin{rem}\label{sscompdiscr}
{\rm If $X$ is not a continuously ordered chain, then $e_{b_m X} G>b_r G$ and by~\cite[Remark 3.14]{KozlovLeiderman} $e_{b_m X} G$ is not a sm-compactification of $G$. $e_{b^{\leftrightarrow}X} G$, $e_{b^{\updownarrow}X} G$ and $e_{b^{+}X} G$ are Ellis compactificaitons incomparable with  $b_r G$. Hence, they are not sm-compactifications of $G$.

Let $X$ be a continuously ordered chain.

(a) $e_{b_m X} G=b_r G$ but $b_r G$ is not a semitopological semigroup. Indeed, take the map $f\in b_r G$,  $f(\inf)=\inf$, $f(t)=\sup$ otherwise. Any nbd of the map $g\in b_r G$, $g(\sup)=\sup$, $g (t)=\inf$ otherwise, is of the form  $O_x^y=\{h\in b_r G\ |\ h((x, 0))<(y, 0)\}$, $x, y\in X$. Since $f\circ g=g$, in any nbd of $g$ of the form of $O_x^y$ there is $h\in b_r G$ such that $h(\inf)=\inf$, $h(t)=(y, -1)$, $\inf<t<\sup$,  $h(\sup)=\sup$. Hence, $(f\circ h)(\inf)=\inf$, $(f\circ h)(t)=\sup$ otherwise and can't belong to the arbitrary nbd of $g$ of the form $O_x^y$. Thus, multiplication on the left in $b_r G$ is not continuous.

(b) $e_{b^{\leftrightarrow}X} G$ is not a semitopological semigroup. Indeed, take $f\in e_{b^{\leftrightarrow}X} G$,  $f((x, -1))=f((x, 0))=f((x, 1))=(x, 1)$, $x$ is a point in $X$, $f(t)=\infty$ otherwise. Any nbd of $f$ is of the form $O_{y, z}^{s, \tau, r}=\{h\in e_{b^{\leftrightarrow}X} G\ |\ h((y, +1))<(s, 0),\ h((x, -1))<(\tau, 0),\ h((x, +1))<(\tau, 0),\ h((z, -1))>(r, 0)\}$, $y, z, \tau, s, r\in X$, $y<x<z$, $s<x<\tau<r$. Since $f\circ f=f$, in any nbd of $f$ of the form of $O_{y, z}^{s, \tau, r}$ there exists $h\in e_{b^{\leftrightarrow}X} G$ such that $(\tau, 0)>h(x, -1)=h(x, 0)=h(x, 1)>(x, 1)$, $h (t)=\infty$ otherwise. $f\circ h$ is a constant map equal to $\infty$. Hence, $f\circ h$ can't belong to the arbitrary nbd of $f$ of the form  $O_{y, z}^{s, \tau, r}$. Thus, multiplication on the left in $e_{b^{\leftrightarrow}X} G$ is not continuous.

(c) $e_{b^{\updownarrow}X} G$ is not a semitopological semigroup. The same map as in (a) can be used. 

(d) $e_{b^{+}X} G$ is not a semitopological semigroup. The same map as in (b) can be used.}
\end{rem}


\begin{rem}
{\rm Let $X$ be an ultrahomogeneous chain, $G$ is a subgroup of $\aut (X)$ which acts ultratransitively on $X$. If $X$ is LOTS, then $(G, \tau_p)$ is a dense subgroup of $(\aut (X), \tau_p)$, if $X$ is a discrete chain, then $(G, \tau_{\partial})$ is a dense subgroup 
of $(\aut (X), \tau_{\partial})$  by Remark~\ref{denseultr}.  The results of \S~\ref{ultLOT} and \S~\ref {ultLOTa} concerning $G$-compactifications of $X$ and Ellis compactifications of $G$ are valid (except that the number of $G$-compactifications of $X$ and, hence, Ellis compactifications of $G$ may increase). Moreover, the corresponding compactifications of $\aut (X)$ and $G$ are homeomorphic (and topologically isomorphic if they are semigroups). }
\end{rem}

\begin{ex}\label{Thompson}
{\rm The action of the Thompson group $(F, \tau_{\partial})$ is ultratransitive on the discrete dense chain $\mathbb Q_r$ of dyadic rational numbers of $(0, 1)$,  and every automorphism from $F$ is a piecewise-linear map that have a finite number of breakpoints~\cite{Canon}. Also, the action of $(F, \tau_p)$ is ultratransitive on LOTS $\mathbb Q_r$. 

$G$-compactifications of the discrete chain $\mathbb Q_r$ for the action $(F, \tau_{\partial})\curvearrowright\mathbb Q_r$ and LOTS $\mathbb Q_r$ for the action $(F, \tau_p)\curvearrowright\mathbb Q_r$ are the same as for the action of the group $\aut (\mathbb Q_r)$. This follows from the fulfillment of the condition:

for any proper gap $\nu=(A, B)$ ($A, B\subset \mathbb Q_r$, $A, B\ne\emptyset$, $A\cap B=\emptyset$, $A\cup B=\mathbb Q_r$, if $x\in A$, $y\in B$, then $x<y$) $\st_{\nu}$ is a closed subgroup of $(F, \tau_{\partial})$ (respectively  $(F, \tau_p)$) and 
$$\mbox{the actions}\ \st_{\nu}\curvearrowright A,\ \st_{\nu}\curvearrowright B\ \mbox{are ultratransitive}.$$
Therefore, results from Section~\ref{chain} can be applied.}
\end{ex}

\bigskip

\end{document}